\newtheorem{lemma}{Lemma}[section]
\newtheorem{theorem}{Theorem}[section]
\newtheorem{proposition}{Proposition}[section]
\newtheorem{remark}{Remark}[section]
\newtheorem{corollary}{Corollary}[section]
\numberwithin{equation}{section}
\newcommand{\dis}{\displaystyle}
\newcommand{\R}{\mathbb{R}}
\renewcommand{\S}{\mathbb{S}}
\newcommand{\FG}{\mathbf{G}}
\newcommand{\FM}{\mathbf{M}}
\newcommand{\FP}{\mathbf{P}}
\newcommand{\CA}{\mathcal{A}}
\newcommand{\CE}{\mathcal{E}}
\newcommand{\CH}{\mathcal{H}}
\newcommand{\CI}{\mathcal{I}}
\newcommand{\CJ}{\mathcal{J}}
\newcommand{\CK}{\mathcal{K}}
\newcommand{\al}{\alpha}
\newcommand{\be}{\beta}
\newcommand{\ga}{\gamma}
\newcommand{\la}{\lambda}
\newcommand{\de}{\delta}
\newcommand{\si}{\sigma}
\newcommand{\pa}{\partial}
\newcommand{\ka}{\kappa}
\newcommand{\eps}{\epsilon}
\newcommand{\rh}{\rho}
\newcommand{\ta}{\theta}
\newcommand{\Ta}{\Theta}
\newcommand{\De}{\Delta}
\newcommand{\eqdef}{\overset{\mbox{\tiny{def}}}{=}}
\begin{document}
\title[Rarefaction wave of Vlasov-Poisson-Boltzmann system]{Global stability of the rarefaction wave of the Vlasov-Poisson-Boltzmann system}

\author[R.-J. Duan]{Renjun Duan}
\address[RJD]{Department of Mathematics, The Chinese University of Hong Kong,
Shatin, Hong Kong, P.R.~China}
\email{rjduan@math.cuhk.edu.hk}

\author[S.-Q. Liu]{Shuangqian Liu}
\address[SQL]{Department of Mathematics, Jinan University, Guangzhou 510632, P.R.~China\\
and Department of Mathematics, The Chinese University of Hong Kong,
Shatin, Hong Kong, P.R.~China}
\email{tsqliu@jnu.edu.cn}

\begin{abstract}
This paper is devoted to the study of the nonlinear stability of the rarefaction waves of the Vlasov-Poisson-Boltzmann system with slab symmetry in the case where the electron background density satisfies an analogue of the Boltzmann relation. We allows that the electric potential may take distinct constant states at both far-fields. The rarefaction wave whose strength is not necessarily small is constructed through the quasineutral Euler equations coming from the zero-order fluid dynamic approximation of the kinetic system. We prove that the local Maxwellian with macroscopic quantities determined by the quasineutral rarefaction wave is time-asymptotically stable under small perturbations for the corresponding Cauchy problem on the Vlasov-Poisson-Boltzmann system. The main analytical tool is the combination of techniques we developed in \cite{DL} for the viscous compressible fluid with the self-consistent electric field and the reciprocal energy method based on the macro-micro decomposition of the Boltzmann equation around a local Maxwellian. Both the time decay property of the rarefaction waves and the structure of the Poisson equation play a key role in the analysis.
\end{abstract}
\maketitle

\thispagestyle{empty}
\setcounter{tocdepth}{1}
\tableofcontents

\section{Introduction}

\subsection{Problem}
There recently has been some progress on the nonlinear stability of three basic wave patterns for the Boltzmann equation with slab symmetry for the shock, rarefaction wave and contact discontinuity, respectively, cf.~\cite{Yu2,LYYZ,HXY}, for instance.  However, for the Boltzmann equation with forces (cf.~\cite{DD,Guo3,Mis,YZ1}), to the best of our knowledge, there are few results on the same issue. In this paper, we will study the time-asymptotic stability of the rarefaction wave for the Boltzmann equation with a self-consistent potential force on the line.

In the absence of the magnetic field, the dynamics of ions in a collisional  plasma with slab symmetry can be described by
 the following one-species Vlasov-Poisson-Boltzmann (VPB for short in the sequel) system (cf.~\cite[Chapter 6.6]{KT}):
\begin{eqnarray}\label{VPB}
\left\{\begin{array}{rll}
\pa_t F +\xi_1\pa_x F-\pa_x\phi\pa_{\xi_1}F =&Q(F,F),\\[2mm]
-\pa_x^2\phi=&\rho-\rho_e(\phi),\ \
\rho=\dis{\int_{\R^3}F}\,d\xi.
\end{array}
\right.
\end{eqnarray}
Here $F=F(t,x,\xi)\geq 0$
stands for the  density distribution function of the only ions particles which have position
$x\in\R$ and velocity
$\xi=(\xi_1,\xi_2,\xi_3)\in\R^3$ at time $t\geq 0$. The slab symmetry with respect to the first coordinate in the spatial domain $\R^3$ has been assumed.
The self-consistent electric potential $\phi=\phi(t, x)$ is induced by the total charges through the Poisson
equation. $Q(F,F)$ is the binary Boltzmann collision term whose explicit formula will be given later on, and collisions by ions with other particles are ignored.
The system (\ref{VPB}) is supplemented with initial data
\begin{equation}\label{BE.Idata1}
F(0,x,\xi)=F_0(x,\xi),
\end{equation}
and with boundary data at far fields
\begin{equation}
\label{VPB.b1}
\lim\limits_{x\rightarrow\pm\infty}F_0(x,\xi)=\FM_{[\rho_\pm,u_\pm,\theta_\pm]}(\xi),\quad u_\pm=[u_{1\pm}, 0, 0],
\end{equation}
and
\begin{equation}\label{con.phi}
\lim\limits_{x\rightarrow\pm\infty}\phi(t,x)=\phi_\pm,
\end{equation}
satisfying the quasineutral assumption
\begin{equation}
\label{VPB.b2}
\rho_\pm=\rho_e(\phi_\pm).
\end{equation}
Here $[\rho_\pm,u_\pm,\theta_\pm]$ and $\phi_\pm$ are assumed to be constant states, and  $\FM_{[\rho_\pm,u_\pm,\theta_\pm]}(\xi)$ are global Maxwellians defined in terms of \eqref{local-Maxwellian}.  Note that $\phi_\pm$ and $\rho_\pm$ can be distinct.

The density $\rho_e=\rho_e(\phi)$ of electrons in \eqref{VPB} depends only on the potential in the sense of  an analogue of the so-called Boltzmann relation, cf.~\cite{Ch}. Specifically, through the paper we suppose that

\begin{description}
  \item[$(\CA)$] $\rho_e(\phi): (\phi_m,\phi_M)\to (\rho_m,\rho_M)$ is a positive smooth function with 
 \begin{equation*}
\rho_m=\inf\limits_{\phi_m<\phi<\phi_M} \rho_e(\phi),\quad \rho_M=\sup\limits_{\phi_m<\phi<\phi_M} \rho_e(\phi),
\end{equation*}
satisfying the following three assumptions:

\medskip
$(\mathcal {A}_1)$ $\rho_e(0)=1$ with $0\in (\phi_m,\phi_M)$;

$(\mathcal {A}_2)$ $\rho_e(\phi)>0$, $\rho'_e(\phi)>0$ for each $\phi\in (\phi_m,\phi_M)$; 

$(\mathcal {A}_3)$ $\rho_e(\phi)\rho''_e(\phi)\leq [\rho'_e(\phi)]^2$ for each $\phi\in (\phi_m,\phi_M)$.

\end{description}
%
%
%
%

\medskip


\noindent Since the electric potential in \eqref{VPB} can be up to an arbitrary constant, the assumption $(\mathcal {A}_1)$ just means that  the electron density has been normalized to be unit when the potential is zero. The other two assumptions $(\CA_2)$ and $(\CA_3)$ assert that the pressure $P^\phi(\rho)$ generated by the potential force under the quasineutral assumption $\rho=\rho_e(\phi)$ is a positive, increasing and convex function of $\rho\in (\rho_m,\rho_M)$; it is to be further clarified later on, see \eqref{pr.pp}.
A typical example satisfying $(\mathcal {A})$ takes the form of
\begin{equation}
\label{D-dene}
\rho_e(\phi)=\left[1+\frac{\ga_e-1}{\ga_e} \frac{\phi}{A_e}\right]^{\frac{1}{\ga_e-1}}, \quad\phi_{m}=-\frac{\ga_e}{\ga_e-1}A_e,\quad \phi_M=+\infty,
\end{equation}
with $\ga_e\geq 1$ and $A_e>0$ being constants.
Note that $\rho_e(\phi)\to e^{\frac{\phi}{A_e}}$ and $\phi_m\to -\infty$
as $\ga_e\to1^+$, which corresponds to the classical Boltzmann relation. Formally, \eqref{D-dene} can be deduced from the momentum equation of the isentropic Euler-Poisson system for the fluid of electrons with the adiabatic exponent $\ga_e$ under the zero-limit of electron mass, namely,
$
\pa_x\left(A_e\rho_e^{\ga_e}\right)=\rho_e\pa_x\phi.
$

The Boltzmann collision operator $Q(\cdot, \cdot)$ in \eqref{VPB} is assumed to be for the hard sphere model (cf.~\cite{CIP,CC}), taking the following non-symmetric form
\begin{equation}
\label{def.bo}
\begin{split}
Q(H_1,H_2)=&\int_{\R^3\times \S_+^2}|(\xi-\xi_\ast)\cdot\omega|
\left[H_1(\xi_\ast')H_2(\xi')-H_1(\xi_\ast)H_2(\xi)\right]\,d\xi_\ast d\omega \\
=&Q_{\textrm{gain}}(H_1,H_2)-Q_{\textrm{loss}}(H_1,H_2),
\end{split}
\end{equation}
where $\S_+^2=\{\omega\in\S^2: (\xi-\xi_\ast)\cdot\omega\geq0\}$,
and $(\xi,\xi_{\ast})$ and $(\xi',\xi_{\ast}')$ denote velocities of two particles before and after an elastic collision, respectively, satisfying
\begin{equation}\label{re.ppc}
\xi'=\xi-[(\xi-\xi_\ast)\cdot\omega]\,\omega,\quad
\xi_\ast'=\xi_\ast+[(\xi-\xi_\ast)\cdot\omega]\,\omega,
\end{equation}
in terms of the conservations of momentum and  energy
$$
\xi+\xi_{\ast}=\xi'+\xi'_{\ast},\quad
|\xi|^2+|\xi_{\ast}|^2=|\xi'|^2+|\xi'_{\ast}|^2.
$$
Note that $ |\xi-\xi_{\ast}| =|\xi'-\xi'_{\ast}|$ holds true.

In the paper, we are interested in the large time asymptotics toward the rarefaction wave of solutions to the Cauchy problem on the VPB system \eqref{VPB}, \eqref{BE.Idata1}, \eqref{VPB.b1}, \eqref{con.phi}, \eqref{VPB.b2}. Precisely, we will show that the local Maxwellian $\FM_{[\rho^r(t,x),u^r(t,x),\theta^r(t,x)]}(\xi)$, where $\left[\rho^r(t,x),u^r(t,x),\theta^r(t,x)\right]$ is a smooth rarefaction wave of the macroscopic quasineutral compressible Euler equations with the same far-field data $[\rho_\pm,u_\pm,\theta_\pm]$ as given in \eqref{VPB.b1}, is stable globally in time in a suitable Sobolev space under small perturbations, and further show that the solution $F(t,x,\xi)$ to the Cauchy problem converges in large time in a velocity-exponential weighted  $L^\infty_xL^2_\xi$ norm toward the local Maxwellian $\FM_{\left[\rho^R(x/t),u^R(x/t),\theta^R(x/t)\right]}(\xi)$ with $\left[\rho^R(x/t),u^R(x/t),\theta^R(x/t)\right]$ being the centred rarefaction wave of the corresponding Riemann problem and the electric potential $\phi(t,x)$ converges in large time in $L^\infty_x$ norm toward $\rho_e^{-1}(\rho^R(x/t))$.

\subsection{Macro-micro decomposition around local Maxwellians}
As in \cite{LYY}, letting $F(t,x,\xi)$ be the solution to the VPB system \eqref{VPB}, one can decompose it into the summation of the macroscopic (or fluid) part represented
by the local Maxwellian ${\bf M}={\bf M}(t,x,\xi)={\bf M}_{[\rho(t,x), u(t,x), \theta(t,x)]}(\xi)$, and the microscopic (or kinetic) part denoted by
${\bf G}={\bf G}(t,x,\xi)$ as
\begin{equation}
\label{def.mmd}
F(t,x,\xi)={\bf M}(t,x,\xi)+{\bf G}(t,x,\xi).
\end{equation}
Here, ${\bf M}(t,x,\xi)$ is defined by the solution $F(t,x,\xi)$ of the VPB system \eqref{VPB} through the five fluid quantities, i.e., the mass density $\rho(t,x)$, momentum density $m(t,x)=\rho(t,x)u(t,x)$, and the energy density $\CE(t,x)+\frac{1}{2}|u(t,x)|^2$, given by
\begin{eqnarray*}
\rho(t,x)&\equiv& \int_{{\R}^3}F(t,x,\xi)\,d\xi,\nonumber\\[2mm]
\rho(t,x)u_i(t,x)&\equiv&\int_{{\R}^3}\psi_i(\xi)F(t,x,\xi)\,d\xi,\ \ i=1,2,3,\\[2mm]
\left[\rho\left(\CE(t,x)+\frac{1}{2}|u(t,x)|^2\right)\right]&\equiv&\int_{{\R}^3}\psi_4(\xi)F(t,x,\xi)\,d\xi,\nonumber
\end{eqnarray*}
in the form of
\begin{equation}\label{local-Maxwellian}
{\bf M}_{[\rho(t,x), u(t,x), \theta(t,x)]}(\xi)\equiv\frac{\rho(t,x)}{(2\pi
R\theta(t,x))^{\frac{3}{2}}}\exp\left(-\frac{|\xi-u(t,x)|^2}{2R\theta(t,x)}\right),
\end{equation}
where $\theta(t,x)$ is the temperature which is related to the
internal energy $\CE(t,x)$  by $\CE=\frac{3}{2}R\theta=\theta$ with the gas constant $R$  chosen
to be $\frac{2}{3}$ for convenience, and
$u(t,x)=[u_1(t,x),u_2(t,x),u_3(t,x)]$ is the fluid velocity in $\R^3$. Also,
$\psi_i$, $i=0,1,2,3,4,$ are the five collision invariants
$$
\psi_0=1,\ \ \psi_i=\xi_i\ (i=1,2,3),\ \ \psi_4=\frac{1}{2}|\xi|^2,
$$
satisfying
\begin{equation}
\label{rjad-clq}
\int_{{\R}^3}\psi_i Q(F, F)\,d\xi=0\ \ \text{for}\ \ i=0,1,2,3,4.
\end{equation}
For any given Maxwellian $\widehat{{\bf M}}={\bf
M}_{\left[\widehat{\rho},\widehat{u},\widehat{\theta}\right]}$, we define an inner
product in $\xi\in{\R}^3$ as
$$
\langle f,g\rangle_{\widehat{{\bf
M}}}\equiv\int_{{\R}^3}\frac{f(\xi)g(\xi)}{\widehat{{\bf M}}}d\xi,
$$
for two functions $f$ and $g$ such that the integral on the right is well defined.

Using the above inner product with respect to the Maxwellian $\widehat{{\bf M}}$, the
following five functions spanning the macroscopic subspace, are mutually orthogonal:
\begin{eqnarray*}
\chi^{\widehat{{\bf M}}}_0\left(\xi;\widehat{\rho},\widehat{u},\widehat{\theta}\right)&\equiv& \frac{1}{\sqrt{\widehat{\rho}}}\widehat{{\bf M}},\\[2mm]
\chi^{\widehat{{\bf
M}}}_{i}\left(\xi;\widehat{\rho},\widehat{u},\widehat{\theta}\right)&\equiv&\frac{\xi_i-\widehat{u}_i}
{\sqrt{{R}\widehat{\rho}\,\widehat{\theta}}}\widehat{{\bf M}},\ \ i=1,2,3,\\[2mm]
\chi^{\widehat{{\bf
M}}}_{4}\left(\xi;\widehat{\rho},\widehat{u},\widehat{\theta}\right)&\equiv&\frac{1}{\sqrt{6\widehat{\rho}}}\left(\frac{|\xi-\widehat{u}|^2}
{{R}\widehat{\theta}}-3\right)\widehat{{\bf M}},\\[2mm]
\left\langle\chi^{\widehat{{\bf M}}}_i,\chi^{\widehat{{\bf
M}}}_j\right\rangle_{{\widehat{\FM}}}&=&\delta_{ij},\ \ \text{for}\ \ i,j=0,1,2,3,4,
\end{eqnarray*}
where $\delta_{ij}$ is the Kronecker delta.  With the above orthonormal set, the
macroscopic projection ${\bf P}^{\widehat{{\bf M}}}_0$ and the microscopic
projection ${\bf P}^{\widehat{{\bf M}}}_1$ can be defined as
$$
\left\{
\begin{array}{rll}{\bf P}^{\widehat{{\bf
M}}}_0h&\equiv&\sum\limits_{j=0}^4\left\langle h,\chi^{\widehat{{\bf
M}}}_j\right\rangle_{\widehat{{\bf M}}}\chi^{\widehat{{\bf M}}}_j,\\[2mm]
{\bf P}^{\widehat{{\bf M}}}_1h&\equiv&h-{\bf P}^{\widehat{{\bf M}}}_0h.
\end{array} \right.
$$
Notice that the operators ${\bf P}^{\widehat{{\bf M}}}_0$ and ${\bf
P}^{\widehat{{\bf M}}}_1$ are orthogonal (and thus self-adjoint)
projections with respect to the inner product
$\langle\cdot,\cdot\rangle_{\widehat{\FM}}$~, i.e.
$$
{\bf P}^{\widehat{{\bf M}}}_0{\bf P}^{\widehat{{\bf M}}}_0={\bf
P}^{\widehat{{\bf M}}}_0,\ \ {\bf P}^{\widehat{{\bf M}}}_1{\bf
P}^{\widehat{{\bf M}}}_1={\bf P}^{\widehat{{\bf M}}}_1,\ \ {\bf
P}^{\widehat{{\bf M}}}_0{\bf P}^{\widehat{{\bf M}}}_1={\bf
P}^{\widehat{{\bf M}}}_1{\bf P}^{\widehat{{\bf M}}}_0=0.
$$
Moreover, it is straightforward to check that
$$
\left\langle{\bf P}^{\widehat{{\bf M}}}_0h, {\bf P}^{\widehat{\widehat{\bf M}}}_1h\right\rangle_{\widehat{{\bf M}}}=\left\langle{\bf P}^{\widehat{\widehat{\bf
M}}}_0h, {\bf P}^{\widehat{{\bf M}}}_1h\right\rangle_{\widehat{\widehat{\bf M}}}=0
$$
holds true for any two Maxwellians $\widehat{{\bf M}}$ and $\widehat{\widehat{\bf M}}$.

Using notations above, the solution $F(t,x,\xi)$ of \eqref{VPB}
satisfies
$$
{\bf P}^{{\bf M}}_0F={\bf M},\quad \ {\bf P}^{{\bf M}}_1F={\bf G}.
$$
By the macro-micro decomposition, the Boltzmann equation in \eqref{VPB}
can be rewritten as
\begin{equation}\label{mmBE}
\pa_t({\bf M}+{\bf G})+\xi_1\pa_x({\bf M}+{\bf
G})-\pa_x\phi\pa_{\xi_1}({\bf M}+{\bf
G})=L_{\bf M}\FG+Q(\FG,\FG),
\end{equation}
where
$$
L_{\bf M}\FG=Q(\FG,\FM)+Q(\FM,\FG)
$$
is the linearized Boltzmann collision operator around the local Maxwellian ${\bf M}$.

Applying ${\bf P}^{{\bf M}}_0$ and ${\bf P}^{{\bf M}}_1$ to
\eqref{mmBE}, one has
\begin{equation*}
\pa_t{\bf M}+{\bf P}^{{\bf M}}_0\left(\xi_1\pa_x{\bf
M}\right)+{\bf P}^{{\bf M}}_0\left(\xi_1\pa_x{\bf G}\right)-\pa_x\phi\pa_{\xi_1}{\bf
M}=0,
\end{equation*}
and
\begin{equation}\label{micBE}
\pa_t{\bf G}+{\bf P}^{{\bf M}}_1\left(\xi_1\pa_x{\bf
M}\right)+{\bf P}^{{\bf M}}_1\left(\xi_1\pa_x{\bf
G}\right)-\pa_x\phi\pa_{\xi_1}{\bf G}=L_{\FM}\FG+Q(\FG,\FG),
\end{equation}
respectively. Notice that \eqref{micBE} further implies
\begin{equation}
\label{micBEc}
{\bf G}=L_{\FM}^{-1}\Big({\bf P}^{{\bf M}}_1\left(\xi_1\pa_x{\bf
M}\right)\Big)+\Ta,
\end{equation}
with
\begin{equation}\label{Ta.def}
\Ta=L^{-1}_{\FM}\left[\pa_t\FG+{\bf P}^{{\bf M}}_1\left(\xi_1\pa_x{\bf
G}\right)-\pa_x\phi\pa_{\xi_1}{\bf G}\right]-L^{-1}_{\FM}[Q(\FG,\FG)].
\end{equation}

\subsection{Macroscopic balance laws}
Now, {due to \eqref{rjad-clq},} from
$$
\int_{{\R}^3}\psi_i\left(\pa_t F +\xi_1\pa_x F-\pa_x\phi\pa_{\xi_1}F\right)d\xi=0,\quad i=0,1,2,3,4,
$$
the system of macroscopic moments takes the following form
\begin{eqnarray}\label{cons.law.}
\left\{
\begin{array}{clll}
\begin{split}
&\pa_t\rho+\pa_x (\rho u_1)=0,\\[2mm]
&\pa_t (\rho u_1)+\pa_x(\rho u_1^2)+\pa_x P+\rho\pa_x\phi=-\int_{{\R}^3}\xi_1^2\pa_x\FG\,d\xi,
\\[2mm]
&\pa_t (\rho u_i)+\pa_x(\rho {u_1u_i})=-\int_{{\R}^3}\xi_i\xi_1\pa_x\FG\,d\xi,\
\ i=2,3,\\[2mm]
&\pa_t \left[\rho\left(\CE+\frac{1}{2}|u|^2\right)\right]+\pa_x \left[u_1\left(\rho\left(\CE+\frac{1}{2}|u|^2\right)+P\right)\right]+\rho u_1\pa_x \phi=-\frac{1}{2}\int_{{\R}^3}|\xi|^2\xi_1\pa_x\FG\,d\xi.
\end{split}
\end{array}
\right.
\end{eqnarray}
Furthermore, by substituting  \eqref{micBEc}, the above Euler-type system \eqref{cons.law.} together with the Poisson equation in \eqref{VPB} lead to the following fluid-type system in the  Navier-Stokes-Poisson form (cf.~\cite{DL}):
\begin{eqnarray}\label{BE-NS}
\left\{
\begin{array}{clll}
\begin{split}
&\pa_t \rho+\pa_x (\rho u_1)=0,\\[2mm]
&\pa_t u_1+u_1\pa_x u_1+\frac{\pa_x P}{\rho}+\pa_x\phi=\frac{3}{\rho}\pa_x\left(\mu(\theta)\pa_x u_1\right)
-\frac{1}{\rho}\int_{{\R}^3}\xi_1^2\pa_x\Theta\,d\xi,\\[2mm]
&\pa_t u_i+{u_1\pa_x u_i}=\frac{1}{\rho}\pa_x\left(\mu(\theta)\pa_x u_i\right)-\frac{1}{\rho}\int_{{\R}^3}\xi_1\xi_i\pa_x\Theta\,d\xi,\
\ i=2,3,\\[2mm]
&\pa_t \left(\CE+\frac{1}{2}|u|^2\right)+u_1\pa_x\left(\CE+\frac{1}{2}|u|^2\right)+\frac{\pa_x (P u_1)}{\rho}+u_1\pa_x\phi\\
&\qquad=\frac{1}{\rho}\pa_x\left(\kappa(\theta)\pa_x \theta\right)+\frac{3}{\rho}\pa_x\left(\mu(\theta)u_1\pa_x u_1\right)+
\frac{1}{\rho}\sum\limits_{i=2}^3\pa_x\left(\mu(\theta)u_i\pa_x u_i\right)
-\frac{1}{2\rho}\int_{{\R}^3}|\xi|^2\xi_1\pa_x\Theta\,d\xi,\\
&-\pa^2_x\phi=\rho-\rho_e(\phi).
\end{split}
\end{array}
\right.
\end{eqnarray}
Note that system \eqref{BE-NS} is unclosed since $\Ta$ depends on the unknown function ${\bf G}$. Here and in the sequel,
$$
P=\frac{2}{3}\rho\CE=\frac{2}{3}\rho\ta
$$
is the pressure for the monatomic gas, and the viscosity coefficient $\mu(\theta)$ and the
heat conductivity coefficient $\kappa(\theta)$, both depending only on $\theta$,  are represented by
\begin{eqnarray}\label{Bur.fun.}
\left\{
\begin{array}{rllll}
\begin{split}
\mu(\ta)&=-\frac{1}{2\theta}\int_{{\R}^3}\xi_1^2L^{-1}_{\FM_{[1,u,\ta]}}
\left(\xi_1^2\FM_{[1,u,\ta]}\right)\,d\xi\\
&=-\frac{3}{2\theta}\int_{{\R}^3}\xi_1\xi_iL^{-1}_{\FM_{[1,u,\ta]}}
\left(\xi_1\xi_i\FM_{[1,u,\ta]}\right)\,d\xi>0,\
i=2, 3,\\
\ka(\ta)&=-\frac{3}{8\ta^2}\int_{{\R}^3}|\xi-u|^2\xi_iL^{-1}_{\FM_{[1,u,\ta]}}
\left(|\xi-u|^2\xi_i\FM_{[1,u,\ta]}\right)\,d\xi>0,\
i=1, 2, 3.
\end{split}
\end{array}
\right.
\end{eqnarray}
For completeness, we will deduce the above formulas in the appendix, see also \cite{Gr,Sone}.

Recalling $\CE=\theta$, the energy equation in \eqref{BE-NS} can be reduced to
\begin{equation}\label{theta}
\begin{split}
\pa_t \theta+u_1\pa_x\theta+\frac{P\pa_x u_1}{\rho}=&\frac{1}{\rho}\pa_x\left(\kappa(\theta)\pa_x \theta\right)+\frac{3}{\rho}\mu(\theta)(\pa_x u_1)^2
\\&+
\sum\limits_{i=2}^3\frac{1}{\rho}\mu(\theta)(\pa_x u_i)^2-\frac{1}{\rho}\int_{{\R}^3}\left(\frac{|\xi|^2}{2}-u\cdot\xi\right)\xi_1\pa_x\Theta\,d\xi.
\end{split}
\end{equation}
For later use, {as in \cite{LYY},} for given $\rho$ and $\theta$, we also define a corresponding entropy quantity $S$ as
\begin{equation}\label{BE.enp.}
S\eqdef -\frac{2}{3}\ln \rho+\ln \left(\frac{4}{3}\pi \theta\right)+1,
\end{equation}
and deduce from the first equation of \eqref{BE-NS} together with \eqref{theta} that $S$ satisfies
\begin{equation*}
\begin{split}
\pa_t S+u_1\pa_x S=&\frac{1}{\rho\theta}\pa_x\left(\kappa(\theta)\pa_x \theta\right)+\frac{3}{\rho\theta}\mu(\theta)(\pa_x u_1)^2
\\&+
\sum\limits_{i=2}^3\frac{1}{\rho\theta}\mu(\theta)(\pa_x u_i)^2-\frac{1}{\rho\theta}\int_{{\R}^3}\left(\frac{|\xi|^2}{2}-u\cdot\xi\right)\xi_1\pa_x\Theta\,d\xi.
\end{split}
\end{equation*}
Notice that from \eqref{BE.enp.}, we have
$$
\theta=\frac{3}{2}k e^{S}\rho^{2/3}
$$
with the constant $k$ given by $k\eqdef \frac{1}{2\pi e}$, so that the pressure can be written as
$$
P=\frac{2}{3}\rho\theta=k e^{S}\rho^{5/3}.
$$
Therefore, whenever $P$ is regarded as a function $v=1/\rho>0$ and $S$ given by $P=k e^{S}v^{-5/3}$, not only $P$ is convex in both $v$ and $S$ but also $P$ is uniformly convex in $[v,S]$. Similarly, it is also the case for $\theta=\frac{3}{2}k e^{S}v^{-2/3}$. We also remark that $v$, $\theta$, $P$ and $S$ obey the second law of thermodynamics
$$
\theta\,dS=d\theta+P\,d v,
$$
implying that any two  thermodynamical quantities among  $v$, $\theta$, $P$ and $S$ can uniquely determine all the other ones.

\subsection{Quasineutral Euler equations and rarefaction waves}

In order to study the large time behavior of the solution $[F(t,x,\xi),\phi(t,x)]$ to the Cauchy problem \eqref{VPB}, \eqref{BE.Idata1}, \eqref{VPB.b1}, \eqref{con.phi}, \eqref{VPB.b2} on the VPB system, we expect that the density distribution function $F(t,x,\xi)$ tends time-asymptotically to the local Maxwellian
$
\FM_{[\rho^R,u^R,\theta^R](x/t)}(\xi),
$
where $\left[\rho^R,u^R,\theta^R\right](x/t)$ with $u^R(x/t)=\left[u_1^R(x/t),0,0\right]$ is defined to be the centre-rarefaction wave solution to the Riemann problem on the macroscopic quasineutral Euler system
\begin{eqnarray}\label{MEt}
\left\{
\begin{array}{clll}
\begin{split}
&\pa_t\rho+\rho\pa_x u_1+ u_1\pa_x\rho=0,\\[2mm]
&\pa_t u_1+u_1\pa_x u_1+\frac{\pa_x P}{\rho}+\pa_x\phi=0,\\[2mm]
&\pa_t \theta+u_1\pa_x\theta+\frac{P\pa_x u_1}{\rho}=0,\\[2mm]
&\rho=\rho_e(\phi),
\end{split}
\end{array}
\right.
\end{eqnarray}
with Riemann initial data given by
\begin{equation}
\label{MEtid}
[\rho, u_1, \theta](0,x)=\left[\rho^R_0,u^R_{1,0}, \theta^R_0\right](x)\eqdef \left\{\begin{array}{rll}[\rho_-,u_{1-}, \theta_-],&\ \ x<0,\\[2mm]
[\rho_+,u_{1+}, \theta_+],&\ \ x>0.
\end{array}
\right.
\end{equation}
Here we recall $P=\frac{2}{3}\rho \theta$. Due to assumptions $(\CA_1)$ and $(\CA_2)$, $\rho_e^{-1}(\cdot)$ exists and the quasineutral equation $\rho=\rho_e(\phi)$ implies $\phi=\rho_e^{-1}(\rho)$, so that the electric potential $\phi(t,x)$  to the Cauchy problem \eqref{VPB}, \eqref{BE.Idata1}, \eqref{VPB.b1}, \eqref{con.phi}, \eqref{VPB.b2} correspondingly tends time-asymptotically to
\begin{equation*}
\phi^R\left(\frac{x}{t}\right)\eqdef \rho_e^{-1} \left(\rho^R\left(\frac{x}{t}\right)\right).
\end{equation*}

The rarefaction wave $\left[\rho^R,u_1^R,\theta^R\right](x/t)$ can be constructed as follows. Recalling \eqref{BE.enp.}, system \eqref{MEt} can be rewritten in terms of $[\rho,u_1,S]$ as
\begin{eqnarray}\label{ME}
\left\{
\begin{array}{clll}
\begin{split}
&\pa_t\rho+\rho\pa_x u_1+ u_1\pa_x\rho=0,\\[2mm]
&\pa_t u_1+u_1\pa_x u_1+\frac{\pa_x P}{\rho}+\pa_x\rho^{-1}_e(\rho)=0,\\[2mm]
&\pa_t S+u_1\pa_x S=0,
\end{split}
\end{array}
\right.
\end{eqnarray}
with $P=k e^{S}\rho^{5/3}$. We define
\begin{equation*}
P^\phi(\rho)=\int^\rho \frac{\varrho}{\rho_e'(\rho_e^{-1}(\varrho))}d\varrho,
\end{equation*}
which is called  the pressure generated by the potential force such that $\pa_x P^\phi(\rho)=\rho\pa_x\phi$ under the quasineutral assumption $\rho=\rho_e(\phi)$. It is straightforward to check
\begin{equation*}
\pa_\rho P^\phi(\rho)=\frac{\rho_e(\phi)}{\rho_e'(\phi)},\quad \pa_\rho^2P^\phi(\rho)=\frac{[\rho_e'(\phi)]^2-\rho_e(\phi) \rho_e''(\phi)}{[\rho_e'(\phi)]^3},
\end{equation*}
with $\phi=\rho_e^{-1}(\rho)$ on the right. Notice that due to the assumptions $(\CA_2)$ and $(\CA_3)$, one has
\begin{equation}
\label{pr.pp}
\pa_\rho P^\phi(\rho)>0,\quad  \pa_\rho^2P^\phi(\rho)\geq 0,
\end{equation}
for each $\rho\in (\rho_m,\rho_M)$.  The quasineutral Euler system \eqref{ME}
has three characteristics
\begin{eqnarray}\label{chara.}
\left\{\begin{array}{rll}
\la_1&=&\la_1(\rho,u_1,S)\equiv u_1-\sqrt{\pa_\rho P (\rho,S) + \pa_\rho P^\phi(\rho)},\\[3mm]
\la_2&=&\la_2(\rho,u_1,S)\equiv u_1,\\[3mm]
\la_3&=&\la_3(\rho,u_1,S)\equiv u_1+\sqrt{\pa_\rho P (\rho,S) + \pa_\rho P^\phi(\rho)}.
\end{array}\right.
\end{eqnarray}
In terms of  two Riemann invariants of the third eigenvalue $\la_3(\rho,u_1,S)$, regarding the original quasineutral Euler system \eqref{MEt} of variables $[\rho,u_1,\theta]$, we define the set of right constant states  $[\rho_+,u_{1+},\theta_+]$ to which a given left constant state $[\rho_-,u_{1-},\theta_-]$ with $\rho_->0$ and $\theta_->0$ is connected  through the $3$-rarefaction wave to be
\begin{multline}\label{def.r3}
R_3(\rho_-,u_{1-}, \theta_-)\equiv\bigg\{[\rho,u_1,\theta]\in\R_+\times\R \times\R_+\ \Big|\
\frac{\rho^{2/3}}{\theta}=\frac{\rho_-^{2/3}}{\theta_-},
\\
u_1-u_{1-}=\int_{\rho_-}^\rho
\frac{\sqrt{\pa_\rho P (\varrho,S_i) + \pa_\rho P^\phi(\varrho)}}{\varrho}
d\varrho,
\ {\rho}>\rho_-,\ {u}_1>u_{1-}\bigg\}.
\end{multline}
Here and in the sequel 
$S_i\eqdef -\frac{2}{3}\ln \rho_-+\ln (\frac{4}{3}\pi\theta_-) +1$ is a constant. Noticing $P(\rho,S_i)=A_i\rho^{5/3}$ with $A_i\eqdef ke^{S_i}$, one can also write
\begin{equation*}
{\pa_\rho P (\rho,S_i) + \pa_\rho P^\phi(\rho)=\frac{5}{3}A_i\rho^{\frac{2}{3}} +\rho\left(\frac{d}{d\rho} (\rho_e^{-1})\right)(\rho)}.
\end{equation*}
Throughout the paper, without loss of generality, we consider only the $3$-rarefaction wave, and the case for the $1$-rarefaction wave can be treated in a similar way. Now, letting $[\rho_+,u_{1+},\theta_+]\in R_3(\rho_-,u_{1-}, \theta_-)$,
the Riemann problem \eqref{MEt}, \eqref{MEtid} admits a self-similar solution, the $3$-rarefaction wave
$\left[\rho^R,u_1^R,\ta^R\right](z)$ with $z=x/t\in \R$, explicitly defined by
\begin{eqnarray}\label{Org.RW.}
\begin{split}
\left\{\begin{array}{rll}
&\la_3\left(\rho^R(z),{u_1^{R}(z)},S_i\right)=
\left\{\begin{array}{ll}
\la_3(\rho_-,u_{1-},S_i)&\quad \mbox{for}\  z<\la_3(\rho_-,u_{1-},S_i),\\[2mm]
z&\quad \mbox{for}\ \la_3(\rho_-,u_{1-},S_i)\leq z\leq  \la_3(\rho_+,u_{1+},S_i),\\[2mm]
\la_3(\rho_+,u_{1+},S_i) &\quad \mbox{for}\ z> \la_3(\rho_+,u_{1+},S_i),
\end{array}\right.\\[8mm]
&u^R_{1}(z)-u_{1-}=
\dis{\int_{\rho_-}^{\rho^R(z)}} \sqrt{\frac{5}{3} A_i \varrho^{-\frac{4}{3}}+{\varrho^{-1}\left(\frac{d}{d\rho} (\rho_e^{-1})\right)(\varrho)}}\,d\varrho,
\\[5mm]
&
\ta^R(z)=\frac{3}{2}A_i(\rho^R(z))^{2/3}.
\end{array}
\right.
\end{split}
\end{eqnarray}
Notice $S^R(z)\equiv S_i$ for
$
S^R(z)\eqdef -\frac{2}{3}\ln \rho^R(z)+\ln (\frac{4}{3}\pi \theta^R(z)) +1.
$

In order to justify the long-time asymptotic behavior of the solution $[F(t,x,\xi),\phi(t,x)]$ to the Cauchy problem on the VPB system to the profile
$
\left[\FM_{\left[\rho^R,u^R,\theta^R\right](x/t)}(\xi),\phi^R(x/t)\right]$,
it is a usual way to deal with the stability analysis of  its smooth approximation
$
\left[\FM_{[\rho^r,u^r,\theta^r](t,x)}(\xi),\phi^r(t,x)\right]$ in the framework of small perturbations,
where corresponding to \eqref{Org.RW.}, the smooth rarefaction wave $[\rho^r,u^r,\theta^r](t,x)$ and $\phi^r(t,x)$ with $u^r(t,x)=[u_1^r(t,x),0,0]$ are defined by
\begin{eqnarray}\label{1-RW.def.}
\begin{split}
\left\{\begin{array}{rll}
&
\la_3(\rho^r(t,x),u_1^r(t,x),S_i)=w(t,x),\\[3mm]
&u^r_1(t,x)-u_{1-}
=\dis{\int_{\rho_-}^{\rho^r(t,x)}} \sqrt{\frac{5}{3} A_i \varrho^{-\frac{4}{3}}
+{\varrho^{-1}\left(\frac{d}{d\rho} (\rho_e^{-1})\right)(\varrho)}}\,d\varrho,\\[4mm]
&\ta^r(t,x)=\frac{3}{2}A_i(\rho^r(t,x))^{2/3},\quad \phi^r(t,x)=\rho_e^{-1}(\rho^r(t,x)),\\[3mm]
&\lim\limits_{x\to \pm\infty}[\rho^r,u_1^r,\theta^r](t,x)=[\rho_\pm,u_{1\pm},\theta_\pm],\quad
[\rho_+,u_{1+},\theta_+]\in R_3(\rho_-,u_{1-}, \theta_-),
\end{array}
\right.
\end{split}
\end{eqnarray}
with $w=w(t,x)$ being the solution to the Burgers' equation
\begin{equation}\label{cl.Re.cons.in}
\left\{\begin{array}{l}
\dis \pa_tw+w\pa_xw=0,\\[3mm]
w(0,x)=w_{0}(x)\eqdef \frac{1}{2}(w_{+}+w_{-})+\frac{1}{2}(w_{+}-w_{-})\tanh (\eps x),\quad w_\pm\eqdef \la_3(\rho_\pm,u_{1\pm},S_i).
\end{array}\right.
\end{equation}
Here $\eps>0$ is a constant to be chosen later on.

\subsection{The main result}
We first introduce some notations.
Let $\FM_\ast =\FM_*(\xi)=\FM_{[\rho_*,u_*,\ta_*]}(\xi)$ be a global Maxwellian such that the constant state $[\rho_\ast,u_\ast,\theta_\ast]$ with $u_\ast=[u_{1\ast},0,0]$ satisfies
\begin{eqnarray}\label{global.M.}
\left\{\begin{array}{rll}
\frac{1}{2}\sup\limits_{(t,x)\in \R_+\times \R}\ta^r(t,x)<\ta_*<\inf\limits_{(t,x)\in \R_+\times \R}\ta^r(t,x),&\\[3mm]
\sup\limits_{(t,x)\in \R_+\times \R}\left\{|\rho^r(t,x)-\rho_*|+|u^r(t,x)-u_*|+|\ta^r(t,x)-\ta_*|\right\}<\eta_0,&
\end{array}\right.
\end{eqnarray}
for a constant $\eta_0>0$ which is not necessarily small. We say $g\in L^2_{\xi}\left(\frac{1}{\sqrt{\FM_*(\xi)}}\right)$
if $\frac{g}{\sqrt{\FM_*(\xi)}}\in L^2_\xi$.
For given $T\in (0,+\infty]$, 
we define the solution space
\begin{equation*}
\widetilde{\CE}([0,T])=\left\{ h(t,x,\xi)\left|
\begin{array}{rlll}
\begin{split}
\frac{\partial^{\al}\pa^\be
h(t,x,\xi)}{\sqrt{\FM_*(\xi)}}
\in  C\left([0,T]; L^2_{x,\xi}({\R}\times{\R}^3)\right)\
\textrm{for} \ \ \ |\al|+|\be|\leq 2
\end{split}
\end{array}
\right.\right\},
\end{equation*}
associated with the norm $\widetilde{\CE}_T(\cdot)$ defined by
$$
\widetilde{\CE}_T(h)\equiv \sup\limits_{0\leq t\leq T}\sum\limits_{|\al|+|\beta|\leq 2}
\int_{{\R}\times{\R}^3}\frac{\left|\partial^\al\partial^\beta h(t,x,\xi)\right|^2}{{\bf M}_*}dxd\xi ,
$$
where $\pa^{\al}\pa^\be=\pa_t^{\al_0}\pa_x^{\al_1}\pa_\xi^{\be}$, $\pa_\xi^\be=\pa_{\xi_1}^{\be_1}\pa_{\xi_2}^{\be_2}\pa_{\xi_3}^{\be_3}$, and $|\al|=\al_0+\al_1$, $|\be|=\be_1+\be_2+\be_3$.
For conveniences later on,
we also use the similar notation $\pa^{\al'}\pa^{\be'}=\pa_t^{\al'_0}\pa_x^{\al'_1}\pa_{\xi_1}^{\be'_1}\pa_{\xi_2}^{\be'_2}\pa_{\xi_3}^{\be'_3}$ with $|\al'|=\al'_0+\al'_1$ and $|\be|=\be'_1+\be'_2+\be'_3$.


The main result of the paper is stated as follows.

\begin{theorem}\label{main.Res.}
Assume that $[\rho_+,u_{1+},\theta_+]\in R_3(\rho_-,u_{1-},\theta_-)$, $\rho_\pm=\rho_e(\phi_\pm)$ with $\phi_\pm\in (\phi_m,\phi_M)$, and the function $\rho_e(\cdot)$ satisfies the assumption $(\CA)$.  Let $\delta_r=|\rho_+-\rho_-|+|u_{1+}-u_{1-}|+|\ta_+-\ta_-|$ be the wave strength which is not necessarily small. 
There are constants 
$\eps_0>0$, $0<\sigma_0<1/3$ and $C_0>0$, which may depend on $\de_r$ and $\eta_0$,
such that if $F_0(x,\xi)\geq 0$ and
\begin{equation}\label{ID.VPB-E}
\begin{split}
\sum\limits_{|\al|+|\be|\leq2}\left\|\pa^{\al}\pa^\be\left(F_{0}(x,\xi)
-\FM_{[\rho^r,u^r,\ta^r](0,x)}(\xi)\right)\right\|^2_{L_x^2\left(L^2_\xi\left(\frac{1}{\sqrt{\FM_*(\xi)}}\right)\right)}
+\eps\leq\eps^2_0,
\end{split}
\end{equation}
where $\eps>0$ is the parameter appearing in \eqref{cl.Re.cons.}, then the Cauchy problem \eqref{VPB}, \eqref{BE.Idata1}, \eqref{VPB.b1}, \eqref{con.phi} of the VPB system admits a unique global solution $[F(t,x,\xi), \phi(t,x)]$ satisfying $F(t,x,\xi)\geq0$ and
\begin{equation}\label{VPB.sol.}
\begin{split}
\sup\limits_{t\geq0}&\sum\limits_{|\al|+|\be|\leq2}\left\|\pa^{\al}\pa^\be\left(F(t,x,\xi)-\FM_{[\rho^r,u^r,\ta^r](t,x)}(\xi)\right)
\right\|^2_{L_x^2\left(L^2_\xi\left(\frac{1}{\sqrt{\FM_*(\xi)}}\right)\right)}
\\&+\sup\limits_{t\geq0}\sum\limits_{|\al|\leq2}\left\|\pa^{\al}\left(\phi(t,x)-\rho^{-1}_e(\rho^r(t,x))\right)\right\|^2_{H^1}\leq C_0\eps_0^{2\sigma_0}.
\end{split}
\end{equation}
Moreover, it holds that
\begin{equation}\label{sol.Lab}
\begin{split}
\sup\limits_{t\rightarrow +\infty}\sup\limits_{x\in\R}\left\{\left\|F(t,x,\xi)-\FM_{[\rho^R,u^R,\ta^R](x/t)}(\xi)
\right\|_{L^2_\xi\left(\frac{1}{\sqrt{\FM_*(\xi)}}\right)}
+\left|\phi(t,x)-\rho^{-1}_e\left(\rho^R(x/t)\right)\right|\right\}
=0.
\end{split}
\end{equation}
\end{theorem}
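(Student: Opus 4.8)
The plan is to prove Theorem~\ref{main.Res.} by a continuity argument built on uniform-in-time a priori estimates for the perturbation of $F$ about the smooth rarefaction profile $\FM_{[\rho^r,u^r,\theta^r]}$, using the elliptic structure of the Poisson equation in an essential way, and then to upgrade the resulting energy bounds to the pointwise asymptotics \eqref{sol.Lab}. First I would record the quantitative properties of $[\rho^r,u_1^r,\theta^r,\phi^r]$ from \eqref{1-RW.def.}: since $\phi^r=\rho_e^{-1}(\rho^r)$ and $[\rho^r,u_1^r,\theta^r]$ solves \eqref{MEt} along the Burgers rarefaction wave \eqref{cl.Re.cons.in} with $S^r\equiv S_i$, the classical theory yields $\pa_x u_1^r\ge0$, $L^p$ decay bounds of the standard type (e.g.\ $\|\pa_x[\rho^r,u_1^r,\theta^r,\phi^r](t)\|_{L^p}\lesssim\min\{\delta_r\eps^{1-1/p},(1+t)^{-1+1/p}\}$ and $\|\pa_x^k[\rho^r,u_1^r,\theta^r,\phi^r](t)\|_{L^p}\lesssim\eps^{k-1}\|\pa_x[\rho^r,u_1^r,\theta^r,\phi^r](t)\|_{L^p}$ for $k\ge2$), and the sup-norm convergence $\|[\rho^r,u_1^r,\theta^r,\phi^r](t,\cdot)-[\rho^R,u_1^R,\theta^R,\phi^R](\cdot/t)\|_{L^\infty_x}\to0$ as $t\to+\infty$. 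I would also note that inserting the profile into \eqref{BE-NS} leaves only the viscous and heat-conductive remainders $\pa_x(\mu(\theta^r)\pa_x u_1^r)$, $\pa_x(\kappa(\theta^r)\pa_x\theta^r)$, etc.\ (there is no Poisson remainder, as $\rho^r=\rho_e(\phi^r)$ identically), while $\pa_x\phi^r\not\equiv0$ whenever $\phi_+\ne\phi_-$.

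Next I would set $[\tilde\rho,\tilde u,\tilde\theta]=[\rho,u,\theta]-[\rho^r,u^r,\theta^r]$ (with $u^r_i\equiv0$ for $i=2,3$), $\tilde\phi=\phi-\phi^r$, $\mathbf f=F-\FM_{[\rho^r,u^r,\theta^r]}$, and keep $\FG$ as in \eqref{def.mmd}. Subtracting the profile identities from \eqref{BE-NS}, \eqref{theta} and \eqref{micBE} gives a Navier--Stokes--Poisson-type system for $[\tilde\rho,\tilde u,\tilde\theta,\tilde\phi]$ coupled to the kinetic equation for $\FG$: on the left it carries the degenerate, sign-definite lower-order terms $\pa_x u_1^r\,\tilde{(\cdot)}$ and the viscous/heat dissipation; the Poisson equation becomes $-\pa_x^2\tilde\phi=\tilde\rho-(\rho_e(\phi)-\rho_e(\phi^r))$; and the right-hand side collects the rarefaction remainders above, the quadratic and cubic nonlinearities in $[\tilde\rho,\tilde u,\tilde\theta,\tilde\phi]$, and the microscopic fluxes $\int_{\R^3}\xi_1\xi_i\pa_x\Theta\,d\xi$ and $\int_{\R^3}|\xi|^2\xi_1\pa_x\Theta\,d\xi$ with $\Theta$ from \eqref{Ta.def}. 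Under the a priori hypothesis that $\widetilde{\CE}_T(\mathbf f)+\sup_{0\le t\le T}\sum_{|\al|\le2}\|\pa^\al\tilde\phi\|_{H^1}^2$ is small, the zeroth-order estimate would come from a relative-entropy functional $\CN(t)$ that, following the strategy of \cite{DL} for the electric-field contribution together with the reciprocal energy method based on the macro--micro decomposition for the kinetic part, is comparable to $\int_\R\big(\tfrac12\rho|\tilde u|^2+\rho^r\,\Psi(\rho,S\,|\,\rho^r,S_i)+\tfrac12|\pa_x\tilde\phi|^2+\Phi_e(\phi\,|\,\phi^r)\big)\,dx$, with $\Psi,\Phi_e$ the relative entropies of the gas pressure and of $\rho_e$; differentiating in $t$ and using the Poisson equation together with $(\CA_2)$--$(\CA_3)$ (so that the potential pressure is increasing and convex, cf.\ \eqref{pr.pp}), one obtains
\[
\tfrac{d}{dt}\CN+c\!\int_\R\!\pa_x u_1^r\,|(\tilde\rho,\tilde u,\tilde\theta)|^2+c\big(\|\pa_x\tilde u\|^2+\|\pa_x\tilde\theta\|^2\big)+c\|\FG\|_\nu^2+c\big(\|\pa_x\tilde\phi\|^2+\|\pa_x^2\tilde\phi\|^2\big)\lesssim\CI(t),
\]
where $\CI(t)$ is either absorbed by the smallness of the perturbation and of $\|\pa_x[\rho^r,u^r,\theta^r]\|_{L^\infty}$ or is time-integrable by the decay bounds of the first paragraph.

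For the higher-order estimates I would apply $\pa^\al\pa^\be$ with $1\le|\al|+|\be|\le2$ to the perturbed equations and estimate $\mathbf f$ in the $\FM_*$-weighted norm defining $\widetilde{\CE}_T$; here the spectral gap $-\langle L_{\FM}g,g\rangle_{\FM}\gtrsim\|g\|_\nu^2$ on $\mathrm{Im}\,\mathbf P^{\FM}_1$ (with $\|\cdot\|_\nu$ the collision-frequency weighted norm), the representation \eqref{micBEc}--\eqref{Ta.def} of $\FG$, and the choice $\tfrac12\sup\theta^r<\theta_*<\inf\theta^r$ in \eqref{global.M.} (which keeps $\FM/\sqrt{\FM_*}$ and the nonlinear collision terms under control) are the essential ingredients, while the $\tilde\phi$-bounds follow from elliptic regularity for the Poisson equation. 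Combining the two layers, integrating in $t$, and noting that the rarefaction remainders and the $\eps$-smoothing of $w_0$ in \eqref{cl.Re.cons.in} contribute only $O(\eps^{2\sigma_0})$ beyond the initial data $O(\eps_0^2)$ for a suitable $\sigma_0\in(0,1/3)$ --- this exponent coming from interpolating the borderline time-decay of second-order rarefaction quantities against the dissipation --- would give a closed estimate implying \eqref{VPB.sol.}.

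Finally, a standard local existence theorem for the VPB system near $\FM_*$, together with this a priori estimate and a continuity argument, yields the unique global solution and \eqref{VPB.sol.} once $\eps_0$ (hence $\eps\le\eps_0^2$) is small. For \eqref{sol.Lab}, the uniform bound \eqref{VPB.sol.}, the time-integrability of the dissipation and the boundedness of the time derivatives force $\|[\tilde\rho,\tilde u,\tilde\theta](t)\|_{H^1_x}+\|\FG(t)\|_{L^2_{x,\xi}(1/\sqrt{\FM_*})}+\|\tilde\phi(t)\|_{H^1}\to0$, and $H^1_x\hookrightarrow L^\infty_x$ upgrades this to $\sup_x\|F(t,x,\cdot)-\FM_{[\rho^r,u^r,\theta^r](t,x)}\|_{L^2_\xi(1/\sqrt{\FM_*})}+\|\tilde\phi(t)\|_{L^\infty_x}\to0$; composing with the sup-norm convergence of the profile to the centred wave from the first paragraph and the continuity of $[\rho,u,\theta]\mapsto\FM_{[\rho,u,\theta]}$ and of $\rho_e^{-1}$ then gives \eqref{sol.Lab}. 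The main obstacle is the a priori estimate: the rarefaction wave supplies only the degenerate, time-decaying dissipation $\pa_x u_1^r$, which must simultaneously control the fluid perturbation and absorb the errors produced by the non-constant background potential $\phi^r$ (nonzero precisely because $\phi_\pm$ may differ) and by the mismatch between the Poisson equation and the quasineutral relation $\rho=\rho_e(\phi)$; extracting the elliptic dissipation of $\tilde\phi$ and balancing these competing decay rates is exactly what restricts the final rate to $\eps_0^{2\sigma_0}$.
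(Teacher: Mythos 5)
Your overall strategy --- macro--micro decomposition around the local Maxwellian, a relative entropy for the fluid part combined with the structure of $\rho_e$ and the sign information from $(\CA_2)$--$(\CA_3)$, elliptic dissipation extracted from the Poisson equation, interpolation of the borderline rarefaction decay to produce the exponent $\sigma_0$, local existence plus a continuity argument, and Sobolev embedding to upgrade to \eqref{sol.Lab} --- coincides with the paper's. However, one step fails as written: the zeroth-order microscopic dissipation. In your displayed inequality you claim $\frac{d}{dt}\CN+\cdots+c\|\FG\|_\nu^2\lesssim \CI(t)$ with $\CI$ either absorbable or time-integrable. This is not attainable. The microscopic equation \eqref{micBE} contains the linear source ${\bf P}^{\FM}_1(\xi_1\pa_x\FM)$, which carries $\pa_x[u_1^r,\theta^r]$; since the wave strength $\delta_r$ is \emph{not} small and $\|\pa_x[u_1^r,\theta^r](t)\|_{L^2}^2\sim \delta_r(1+t)^{-1}$, testing against $\FG/\FM_*$ produces a forcing that is not integrable in time, and Gronwall only returns logarithmic growth. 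Equivalently, $\FG$ itself contains the Chapman--Enskog part driven by the profile, $L_{\FM}^{-1}{\bf P}_1^{\FM}[\xi_1\FM(\xi_1\pa_xu_1^r+\tfrac{|\xi-u|^2}{2\theta}\pa_x\theta^r)]$, whose square integral in $t$ diverges. The paper removes exactly this part by introducing $\overline{\FG}$ as in \eqref{def.ng} and proving dissipation only for $\widetilde{\FG}=\FG-\overline{\FG}$ (Proposition \ref{g.eng.lem.}); the same splitting is what makes the new forcing term $\pa_x\phi\,\pa_{\xi_1}\FG$ controllable, since $\pa_x\phi\,\pa_{\xi_1}\overline{\FG}$ is bounded pointwise through $\pa_x[u_1^r,\theta^r]$ while $\pa_x\phi\,\pa_{\xi_1}\widetilde{\FG}$ is absorbed into the dissipation of the velocity derivatives of $\widetilde{\FG}$. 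Your outline needs this corrector (or an equivalent device) for the scheme to close.

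Two smaller inaccuracies. First, the profile satisfies the quasineutral relation, not the Poisson equation, so the perturbed Poisson equation \eqref{tphy} carries the extra (decaying, but nonzero) source $\pa_x^2\phi^r$, which you dropped when asserting that ``there is no Poisson remainder.'' Second, at first and second order the linear term $\pa^{\al}{\bf P}_1^{\FM}(\xi_1\pa_x\FM)$ again obstructs a direct $\FM_*$-weighted estimate of $\pa^{\al}\FG$; the paper circumvents this by estimating the full $\pa^{\al}F$ from the original equation with both the local weight $R\theta/\FM$ and the global weight $1/\FM_*$ and playing the two estimates off against each other, a point your single-weight sketch does not address.
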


We remark that
in  \eqref{ID.VPB-E}, all the time derivatives are understood to be the limit as $t\to 0^+$ of those terms after iteratively replacing all the time differentiations in terms of the equations of $F(t,x,\xi)$ and $[\rho^r,u^r,\theta^r](t,x)$.
%
Moreover, whenever $\eps_0>0$ is suitably small,   \eqref{ID.VPB-E} also implies that there exists a constant $C>0$ such that
\begin{multline}\label{ID.VPB2}
\sum\limits_{|\al|\leq2}\left\|\pa^{\al}\left[\rh_0(x)-\rho^r(0,x),u_0(x)-u^r(0,x),\ta_0(x)-\ta^r(0,x)\right]\right\|^2\\
+\sum\limits_{|\al|+|\be|\leq2}\left\|\pa^{\al}\pa^\be\left(\FM_{[\rho_0(x),u_0(x),\ta_0(x)]}(\xi)
-\FM_{[\rho^r,u^r,\ta^r](0,x)}(\xi)\right)\right\|_{L_x^2\left(L^2_\xi\left(\frac{1}{\sqrt{\FM_\ast(\xi)}}\right)\right)}\\
+\sum\limits_{|\al|+|\be|\leq2}\left\|\pa^{\al}\pa^\be\FG_0(x,\xi)\right\|^2
_{L_x^2\left(L^2_\xi\left(\frac{1}{\sqrt{\FM_*(\xi)}}\right)\right)}
\leq C\eps_0^2,
\end{multline}
where $F_0(x,\xi)=\FM_{[\rho_0(x),u_0(x),\ta_0(x)]}(\xi)+\FG_0(x,\xi)$ is the macro-micro decomposition  of initial data $F_0(x,\xi)$.
Note that \eqref{ID.VPB2} will also be used in the proof of Theorem \ref{main.Res.}. For completeness, the proof of  \eqref{ID.VPB2} is given in the appendix.

\subsection{Literature}


We first present the main motivations of studying the system \eqref{VPB} that we have proposed at the beginning. In general, the motion of charged particles (e.g., electrons and ions) with slab symmetry is governed by the following two-species system
\begin{equation}
\label{2svpb}
\begin{split}
\pa_t F_i +\xi_1\pa_x F_i -\frac{1}{m_i}\pa_x\phi \pa_{\xi_1}F_i &=Q(F_i,F_i)+Q(F_i,F_e),\\
\pa_t F_e+\xi_1\pa_x F_e +\frac{1}{m_e}\pa_x\phi \pa_{\xi_1}F_e &=Q(F_e,F_i)+Q(F_e,F_e),
\end{split}
\end{equation}
coupling to
\begin{equation}
\label{2svpb-f}
-\pa_x^2\phi=\int_{\R^3}F_i\,d\xi-\int_{\R^3}F_e\,d\xi.
\end{equation}
Here $F_{i,e}(t,x,\xi)$ are the number density functions for the ions and electrons respectively, and $m_{i,e}$ are their masses. The Boltzmann collision terms on the right are defined in terms of \eqref{def.bo} with the relationship \eqref{re.ppc} replaced by
\begin{equation*}
\begin{split}
\xi'&=\xi-\frac{2m_1}{m_1+m_2}[(\xi-\xi_\ast)\cdot \omega]\,\omega,\\
\xi_\ast'&=\xi_\ast+\frac{2m_2}{m_1+m_2}[(\xi-\xi_\ast)\cdot \omega]\,\omega,
\end{split}
\end{equation*}
taking in account different masses $m_1,m_2\in\{m_i,m_e\}$. The study of system \eqref{2svpb}, \eqref{2svpb-f} has recently attracted many attentions. Among them, we mention series of works by Guo \cite{G,Guo3,G-IUMJ,Guo-VPL}, including the study of the more complex Vlasov-Maxwell-Boltzmann system and the case when the Boltzmann operator is replaced by the more physical Landau collision operator for plasmas. In those works, a robust energy method is developed to treat the global stability of global Maxwellians for the Cauchy problem in perturbation regime. The key point is to construct the delicate temporal energy functional and energy dissipation rate to control the nonlinear terms along the linearized dynamics. There exist many substantial extensions basing on the Guo's approach to further study the large time behavior of solutions on torus or in the whole space, particularly the issue of rates of convergence to the global Maxwellians, for instance, we would only mention Strain-Guo \cite{SG}, Duan-Strain \cite{DS}, Yang-Yu \cite{YY}, Duan-Yang-Zhao \cite{DYZ}, Duan-Liu \cite{DL-CMP}, Wang \cite{Wa}, Xiao-Xiong-Zhao \cite{XXZ}.  Recently, the spectral analysis is also carried out by Li-Yang-Zhong \cite{LYZ} for the VPB system in the same spirit of the classical works by Ellis-Pinsky \cite{EP} and Ukai \cite{U74}, see also Glassey-Strauss \cite{GS} for the early discussion on spectrum of a general kinetic evolution operator and its application to the VPB system. We emphasize that the appearance of the self-consistent force may be able to take an essential effect on the structure of systems under consideration and induce additional analytical difficulties in the application of both the energy method and the spectrum method.

Whenever the initial data $F_0(x,\xi)$ approaches distinct global Maxwellians at far fields, typically a phase transition occurring at initial time, we may not expect that the solution to the Cauchy problem on the Boltzmann equation converges to a constant equilibrium state in large time. Instead, the solution usually tends time-asymptotically toward the wave patterns of the Boltzmann equation, such as shock wave (cf., Caflisch-Nicolaenko \cite{CN}, Liu-Yu \cite{LY, LY-im}, Yu \cite{Yu2}), rarefaction wave (cf., Liu-Yang-Yu-Zhao \cite{LYYZ}, Xin-Yang-Yu \cite{XYY}), contact discontinuity (cf., Huang-Yang \cite{HY}, Huang-Xin-Yang \cite{HXY}), and their superposition. As far as either the rarefaction wave or the contact wave is concerned, the wave profile is in the form of a local Maxwellian with its macroscopic quantities formally determined by the conservation laws with the same far-field data. To treat the stability of such nontrivial time-asymptotic local Maxwellian, another kind of energy method is initiated by Liu-Yu \cite{LY}, developed by Liu-Yang-Yu \cite{LYY}, and later improved by Yang-Zhao \cite{YZ2}. Here, the main idea of the approach is to make use of the macro-micro decomposition \eqref{def.mmd} to rewrite the nonlinear kinetic Boltzmann equation as the form of the compressible Navier-Stokes-type system, so that the analysis in the context of the viscous conservation laws can be applied. Note that the kinetic part $\FG(t,x,\xi)$ is always dissipative due to the so-called $H$-theorem.  For applications of the approach to the VPB system, see Yang-Yu-Zhao \cite{YYZ} and Yang-Zhao \cite{YZ1}. At this moment we recall that the nonlinear stability of one-dimensional wave patterns regarding the classical fluid dynamic equations has been well established, for instance, Goodman \cite{Go}, Matsumura-Nishihara \cite{MN86,MN92,MN-S}, Liu-Xin \cite{LX}, Huang-Xin-Yang \cite{HXY}, see also the textbooks \cite{CF} and \cite{S} for the general theory.

There also exists a huge number of papers to apply the Liu-Yang-Yu's approach to study the fluid dynamic limit of the nonlinear Boltzmann equation as the Knudsen number $K\!n$ which is proportional to the mean free path goes to zero. In this direction, we mention the previous classical works by Nishida \cite{Ni}, Caflisch \cite{Caf}, Ukai-Asano \cite{UA}. Recently, Huang-Wang-Wang-Yang \cite{HWWY} has succeeded in justifying the convergence of the Boltzmann equation to the compressible Euler system as $K\!n\to 0^+$ in the setting of a Riemann solution that contains the generic superposition of shock, rarefaction wave, and contact discontinuity to the full compressible Euler system; see also  some previous relative works by Yu \cite{Yu1}, Huang-Wang-Yang \cite{HWY,HWY-cmp,HWY-arma}, and Xin-Zeng \cite{XZ} and its improvement Li \cite{Li}. On the other hand, Guo \cite{G06} also developed an energy method to deal with the diffusive limit of the Boltzmann equation, that is the limit to the incompressible Navier-Stokes equations as $K\!n\to 0^+$.  Interested readers may refer further to the book chapter by Golse \cite{Gol} and the book  by Saint-Raymond \cite{SR} for the detailed representations of the topic mainly in terms of the weak compactness method.

Even though there have been extensive studies of the time-asymptotics to the wave patterns for the Boltzmann equation and the relative hydrodynamical limits as $K\!n\to 0^+$, it seems that few results are devoted to the same issue in the case of appearance of a self-consistent force, for instance, the Vlasov-type system \eqref{2svpb}, \eqref{2svpb-f}. One of the main mathematical difficulties comes from the effect of the self-consistent force on the coupling system. We observe that the macroscopic system is in the form of the compressible Euler-Poisson system up to the zero-order and the compressible Navier-Stokes-Poisson system up to the first-order. Here we should mention the work by Guo-Jang \cite{GJ} for the study of the VPB system describing the dynamics of an electron gas in a constant ion background. By using the $L^2$-$L^\infty$ method introduced in \cite{G-bd}, they prove that any solution of the VPB system near a smooth local Maxwellian with a small irrotational velocity converges global in time to the corresponding solution to the Euler-Poisson system, as $K\!n\to 0^+$.

Back to the fluid level, Duan-Yang \cite{DY} recently proved the stability of rarefaction wave and boundary layer for outflow problem on the two-fluid Navier-Stokes-Poisson  equations. We point out that due to the techniques of the proof, it was assumed in \cite{DY} that all physical parameters in the model must be unit, particularly $m_i=m_e$ and $T_i=T_e$, which is obviously unrealistic since ions and electrons generally have different masses and temperatures.  One key point used in \cite{DY} is that the large-time behavior of the electric potential is trivial and hence the two fluids indeed have the same asymptotic profiles which are constructed from the Navier-Stokes equations without any force instead of the quasineutral system.

Motivated by \cite{DY}, we studied in \cite{DL} the time-asymptotic stability of rarefaction waves for the isentropic compressible two-fluid  Navier-Stokes-Poisson system or the corresponding one-fluid system for ions under the Boltzmann relation. One of important improvements is that all physical constants appearing in the model can be taken in a general way, and the large-time profile of the electric potential is constructed on the basis of the quasineutral assumption.
Compared to the classical Navier-Stokes system without any force, the main difficulty in the proof for the Navier-Stokes-Poisson system is to treat the estimates on those terms related to the potential function $\phi$. Since the large-time behavior of $\phi$ has a slow time-decay rate and the strength of rarefaction waves is not necessarily small, it is quite nontrivial to estimate the coupling term $\pa_x\phi$ in the momentum equation as in \eqref{BE-NS}. The key point to overcome the difficulty is to use the good dissipative property from  the Poisson equation. In the case of one-fluid, the technique that we used is to expand $\rho_e(\phi)$ around the asymptotic profile up to the third-order and then make use of some cancelation property in the energy estimate. In the two-fluid case, the situation is more complicated since the dissipation of the system  becomes much weaker than that in the case of one-fluid ions. We found that the trouble term turns out to be controlled by taking the difference of two momentum equations with different weights so as to balance the different masses of fluids, which is essentially due to the symmetry of the two-fluid model.

Therefore, we expect to combine the techniques employed in \cite{DL} at the fluid level with the developed energy method at the kinetic level to deal with the stability of rarefaction waves of the VPB system \eqref{2svpb}, \eqref{2svpb-f}. In order to figure out the most technical part of the analysis, for brevity we only consider in the paper the motion of one-species VPB system \eqref{VPB} for ions under the generalized Boltzmann relation satisfying the assumption $(\CA)$. Here, we remark that the Boltzmann relation $\rho_e=\rho_e(\phi)$ has been extensively used in the mathematical study of both the fluid dynamic equations, for instance, Guo-Pausader \cite{GP},  Suzuki \cite{Su}, Nishibata-Ohnawa-Suzuki \cite{NOS-h}, and the kinetic Vlasov-type equations, for instance, Han-Kwan \cite{HK}, Charles-Despr\'es-Perthame-Sentis \cite{CDPS}.

Several closely relative problems could arise from the current work, and we would list some of them for the future considerations. First of all, it is of course an interesting problem to justify the fluid dynamic limit of the VPB system \eqref{2svpb}, \eqref{2svpb-f} or the modelling system \eqref{VPB} to the two-fluid Euler-Poisson system or the one-fluid Euler-Poisson system for ions, respectively. The setting of function spaces associated with solutions to the fluid dynamic equations can be different, for instance, as used in \cite{Ni,Caf,Yu1, HWY-cmp,XZ}, analytical or smooth solutions, or solutions containing a single wave pattern.
In the mean time, motivated by \cite{DY} and \cite{Su},  we point out that it should be an even more interesting and challenging problem to study the proposed model \eqref{VPB}  on the half space, which is related to the justification of the kinetic Bohm criterion (cf.~\cite{Ch}). After all, in the context of plasma, collisions between particles are usually described by the Boltzmann operator for long-range potentials or more physically by the classical Landau operator for the Coulomb potential taking into account the grazing effect of plasma. Thus, it is a problem to extend the current result to those interesting cases.

\subsection{Key points of the proof}


In what follows we simply outline a few key points of the proof of Theorem \ref{main.Res.} which are distinct to some extent with the previous work Liu-Yang-Yu-Zhao \cite{LYYZ} concerning the stability of the rarefaction wave for the Boltzmann equation without any force:

\begin{itemize}
  \item We work in the Eulerian coordinate instead of the Lagrangian coordinate. It is not only because it is more convenient to treat the Poisson equation and the coupling term $\pa_x \phi \pa_{\xi_1}F$ in the  Eulerian coordinate, but also it seems necessary if one would consider the same issue for the two species VPB model \eqref{2svpb}. Note that the Eulerian coordinate has been also used in \cite{DL} to deal with the Navier-Stokes-Poisson system.
  \item We choose an appropriate entropy functional to treat the zero-order energy estimate. The relative entropy functional  takes the form of
 \begin{equation}
\label{def.entr}
\eta(v,u,\theta;v^r,u^r,\theta^r)=\frac{2}{3}\theta^r \Phi\left(\frac{v}{v^r}\right) +\frac{1}{2} |u-u^r|^2+\theta^r\Phi\left(\frac{\theta}{\theta^r}\right),
\end{equation}
where $\Phi(\tau)=\tau-\ln \tau -1$, $u^r=[u_1^r,0,0]$, and $v=1/\rho$, $v^r=1/\rho^r$. The form is indeed consistent with the one in the proof of the fluid dynamic limit of the Boltzmann equation to the rarefaction wave as $K\!n\to 0^+$, for instance \cite{XZ}. We recall that the quasineutral rarefaction wave is defined in \eqref{1-RW.def.}, where due to the assumption $(\CA)$, the induced pressure $P^\phi(\cdot)$ makes no essential effect on the energy estimates, see \eqref{mdef} for instance.
  \item We carry out the energy estimates on the inner product term
  \begin{equation*}
\int_\R \pa_x [\rho (u_1-u_1^r)] (\phi-\phi^r)\,dx,
\end{equation*}
through the Poisson equation after expanding the electron density function $\rho_e(\phi)$ to the third-order. The main reason for this technique is that as mentioned before, the potential profile $\phi^r$ has a slow time-decay. On the other hand, those contributions from the first-order and second expansions enjoy some cancelation property, see the estimate on \eqref{I2}.
\item We have to introduce the velocity derivatives in the solution space to take care the forcing term  $\pa_x \phi \pa_{\xi_1}\FG$ which dose not appear in \cite{LYYZ}.  The energy method for this part is due to \cite{G}. To control the terms involving $\pa_x \phi \pa_{\xi_1}\FG$, one has to split it into two parts:
$\pa_x \phi \pa_{\xi_1}\widetilde{\FG}$ and $\pa_x \phi \pa_{\xi_1}\overline{\FG}$, and then estimate each part respectively; this is different from the works \cite{G,YYZ,YZ1,YZ3}.

\item The assumptions $(\CA_2)$ and $(\CA_3)$ assure that the delicate term
  \begin{equation}
\label{J7}
\frac{1}{2}\left(\widetilde{\phi}^2, \rho_{e}''(\phi^r)\frac{d\phi^r}{d\rho^r}\rho^r\pa_x u_1^r\right)
-\frac{1}{2}\left(\widetilde{\phi}^2, \rho_{e}'(\phi^r)\pa_xu_1^r\right),
\end{equation}
coming from \eqref{rjadcintr},
is always non-positive, and we point  out that the classical Boltzmann relation $\rh_e(\phi)=e^{\frac{\phi}{A_e}}$ looks  critical in the sense that
it can make the above expression \eqref{J7} vanish; this phenomenon has been also observed in our previous work \cite{DL}.

\item 
The energy method around the local Maxwellian that we develop in the paper is a little different from the standard one used in the previous works, for instance \cite{LYY,LYYZ,YZ2}. We have to make some extra efforts to take care
the highest order energy of the fluid component and the dissipation of $\overline{\FG}$
and $\widetilde{\FG}$; see Section \ref{sec3.2} for the detailed discussion.

\end{itemize}

\medskip
The rest of the paper is arranged as follows. In Section 2, we present the construction of the quasineutral rarefaction waves as well as their properties. In the main part Section 3, we give the priori estimates on both the fluid part and the kinetic part. The proof of the local existence is sketched in Section 4, and the proof of Theorem \ref{main.Res.} is therefore concluded in Section 5. In the Appendix, we give full details that are left in the proofs of the previous sections for completeness of the paper.

\medskip
\noindent {\it Notations.} Throughout this paper,  $C$ denotes some generic positive (generally large) constant and $\la$ denotes some generic positive (generally small) constant, where both $C$ and $\la$ may take different values in different places. $D\lesssim E$ means that  there is a generic constant $C>0$
such that $D\leq CE$. $D\sim E$
means $D\lesssim E$ and $E\lesssim D$. $\|\cdot\|_{L^p}$ $(1\leq p\leq+\infty)$
stands for the $L_x^p-$norm. Sometimes, for convenience, we use $\|\cdot\|$ to denote $L_x^2-$norm, and use $(\cdot,\cdot)$ to denote the inner product in $L^2_x$ or $L^2_{x,\xi}$.
We also use $H^{k}$ $(k\geq0)$ to denote the usual Sobolev space with respect to $x$ variable.
If each
component of $\al'$ is not greater than that of $\al$, we denote the condition by $\al'\leq \al$.
We also define $\al'<\al$ if $\al'\leq \al$ and $|\al'|<|\al|$.
 For $\al'\leq \al$, we also use $C^{\al}_{\al'}$ to denote the usual binomial coefficient.
The same notations  also apply to $\be$ and $\be'$.

\section{Rarefaction waves of the quasineutral Euler system}

It is well-known that the Riemann problem on the Burgers' equation
\begin{eqnarray*}
\left\{\begin{array}{rll}
\begin{split}
w_t+ww_x&=0,\\
w(x,0)&=w_0=\left\{\begin{array}{rll}
w_-,\ x&<0,\\
w_+,\ x&>0,\end{array}\right.
\end{split}
\end{array}\right.
\end{eqnarray*}
for two constants $w_-<w_+$, 
admits a continuous weak solution $w^{R}(x/t)$ connecting $w_-$ and
$w_+$, 
in the form of
\begin{eqnarray*}
w^{R}(x/t)=\left\{\begin{array}{ll}
w_-&,\ \frac{x}{t}< w_-,\\[3mm]
\dis \frac{x}{t}&,\ w_-\leq \frac{x}{t}\leq w_+,\\[3mm]
w_+&,\ \frac{x}{t}> w_+.
\end{array}\right.
\end{eqnarray*}
The solution to the Burgers' equation becomes smooth whenever the Riemann data is replaced by a  smooth increasing function. Here we refer to the construction introduced in \cite{MN86}
with respect to initial data whose gradient is proportional to a
parameter $\eps>0$.
%
%
%
In fact, for given constants $w_-<w_+$, the rarefaction wave $w^R(x/t)$ can be approximated by a smooth function ${w}(t,x)$ satisfying
\begin{equation}\label{cl.Re.cons.}
\left\{\begin{array}{rll}
\begin{split}
\pa_tw+w\pa_xw&=0,\\
w(0,x)&=w_{0}(x)=\frac{1}{2}(w_{+}+w_{-})+\frac{1}{2}(w_{+}-w_{-})\tanh (\eps x).
\end{split}
\end{array}\right.
\end{equation}
We now list some basic properties for the smooth rarefaction wave $w(x,t)$ as follows.

\begin{lemma}\label{cl.Re.Re.}
Let $\overline{\delta}=w_+-w_->0$ be the wave strength. 
Then the problem \eqref{cl.Re.cons.} has a unique smooth solution
$w(t,x)$, satisfying

\noindent$(i)$ $w_-<w(t,x)<w_+$, $\pa_x {w}>0$ for all $x\in\R$ and $t\geq0$.

\noindent$(ii)$ For any $1\leq p\leq+\infty$, there exists a constant $C_p$ such that for $t>0$,
$$
\|\pa_xw\|_{L^p}\leq C_p\min\left\{\overline{\delta}\eps^{1-1/p}, \overline{\delta}^{1/p}t^{-1+1/p}\right\},
$$
$$
\|\pa^j_xw\|_{L^p}\leq C_p\min\left\{\overline{\delta}\eps^{j-1/p}, \eps^{j-1-1/p}t^{-1}\right\},\ \ j\geq2.
$$

\noindent$(iii)$ $\lim\limits_{t\rightarrow+\infty}\sup\limits_{x\in\R}\left|w(t,x)-w^R(x/t)\right|=0$.

\end{lemma}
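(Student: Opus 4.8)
The plan is to reduce everything to the classical analysis of the smoothed Burgers rarefaction wave, which is standard since the potential $P^\phi$ does not enter the Burgers equation \eqref{cl.Re.cons.} at all. First I would solve \eqref{cl.Re.cons.} explicitly by the method of characteristics: since $w_0$ is smooth and strictly increasing (because $\overline\delta>0$ and $\tanh$ is increasing), the characteristics $x = y + t\,w_0(y)$ never cross for $t\ge 0$, so the implicit relation $w(t,x) = w_0(y)$, $x = y + t\,w_0(y)$, defines a unique global smooth solution. Differentiating the characteristic relation gives $\pa_x y = 1/(1 + t\,w_0'(y))$ and hence $\pa_x w = w_0'(y)/(1 + t\,w_0'(y))$, which is positive for all $t\ge 0$ and $x\in\R$, and manifestly $w_- < w(t,x) < w_+$ because $w_0$ has range $(w_-,w_+)$; this gives part $(i)$.

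For part $(ii)$, I would extract the decay estimates from this explicit formula exactly as in Matsumura-Nishihara \cite{MN86}. Since $w_0'(y) = \tfrac12\overline\delta\,\eps\,\mathrm{sech}^2(\eps y)$, one has $\|w_0'\|_{L^\infty}\sim \overline\delta\eps$ and, more generally, $\|\pa_x^j w_0\|_{L^p}\sim \overline\delta\,\eps^{j-1/p}$ for each $j\ge 1$, $1\le p\le\infty$; these are the $\eps$-dominated bounds in $(ii)$. For the $t$-dominated bounds one uses $\pa_x w = w_0'/(1+tw_0')\le 1/t$ pointwise, together with the change of variables $x\mapsto y$ (Jacobian $1+tw_0'$) to integrate; this yields $\|\pa_x w\|_{L^p}\lesssim \overline\delta^{1/p} t^{-1+1/p}$, and the higher derivatives $\pa_x^j w$, $j\ge 2$, are handled by repeatedly differentiating $x = y+tw_0(y)$, expressing $\pa_x^j w$ in terms of $w_0',\dots,w_0^{(j)}$ and $(1+tw_0')^{-1}$, and estimating term by term to get the bound $\lesssim \eps^{j-1-1/p}t^{-1}$. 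Taking the minimum of the two regimes finishes $(ii)$.

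For part $(iii)$, I would compare $w(t,x)$ with the exact centred wave $w^R(x/t)$ region by region in the self-similar variable $z = x/t$: outside $[w_-,w_+]$ both are equal to the constant far-field values up to an error controlled by the exponential tails of $\tanh(\eps x)$, while inside the fan $\{w_- \le z \le w_+\}$ one shows $|w(t,x) - z| \to 0$ uniformly, using that $x = y + t w_0(y)$ forces $|w(t,x)-z| = |w_0(y) - z| \le |x/t - w_0(y)| = |y|/t$ and a crossover argument balancing $|y|/t$ against the width $O(1/\eps)$ of the transition layer. I do not expect any serious obstacle here; the only mildly delicate point is keeping the estimates in $(ii)$ uniform in the two parameters $\eps$ and $t$ simultaneously, which is why the bounds are stated as minima of an $\eps$-power and a $t$-power — the change-of-variables bookkeeping for the higher-order derivatives is the most technical (but entirely routine) part, and all of it is already in the literature, so I would simply cite \cite{MN86} for the details.
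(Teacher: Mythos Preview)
Your proposal is correct and matches the paper's treatment: the paper does not give a proof of this lemma at all, but simply states it and refers to Matsumura--Nishihara \cite{MN86} for the construction, exactly as you do at the end of your sketch. Your outline of the characteristics argument and the $\eps$-vs-$t$ bookkeeping is the standard one from that reference, so nothing further is needed.
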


It is also well-known that for the full Euler system \eqref{ME},  the $i$-th $(i=1, 3)$ rarefaction wave can be constructed along the corresponding  rarefaction wave  curve $R_i$
when the $i$-th characteristics satisfies the inviscid Burgers' equation with increasing data. We should point out that the existence of both \eqref{Org.RW.} and \eqref{1-RW.def.} can be directly verified due to the property \eqref{pr.pp} of $P^\phi(\rho)$.
Recall \eqref{chara.} and \eqref{def.r3}. For two constant states $[\rho_\pm,u_{1\pm},\theta_\pm]$ with $[\rho_+,u_{1+},\theta_+]\in R_3(\rho_-,u_{1-},\theta_-)$, we set $w_\pm=\la_3(\rho_\pm, {u_{1\pm}}, S_i)$.
One can see that
$[\rho^{r}, u_1^{r},\ta^r]=[\rho^{r}, u_1^{r},\ta^r](t,x)$ defined in \eqref{1-RW.def.} and \eqref{cl.Re.cons.in}
is the smooth approximation of $[\rho^R,u_1^R,\ta^R](x/t)$ constructed by \eqref{Org.RW.}. We also emphasize that
$[\rho^r,u_1^r]$ satisfies the isentropic Euler-type equations
\begin{eqnarray}\label{ME2}
\left\{
\begin{array}{clll}
\begin{split}
&\pa_t\rho^r+\pa_x(\rho^ru^r_1)=0,\\[2mm]
&\pa_t u^r_1+u^r_1\pa_xu^r_1+\frac{\pa_x \left[P^r+P^\phi(\rho^r)\right]}{\rho^r}=0,
\end{split}
\end{array}
\right.
\end{eqnarray}
with
\begin{equation}\label{pr}
P^r
=A_i(\rho^r)^{5/3},
\end{equation}
and $\theta^r$ is determined by
\begin{equation*}
\theta^r=\frac{3}{2}A_i(\rho^r)^{2/3}.
\end{equation*}
With Lemma \ref{cl.Re.Re.} in hand, one has the corresponding results concerning the smooth rarefaction wave $[\rho^{r}, u_1^{r},\ta^r]$ given by \eqref{1-RW.def.} and \eqref{cl.Re.cons.in}.

\begin{lemma}\label{cl.Re.Re2.}
It holds that

\noindent$(i)$ $\pa_xu_1^{r}(t,x)>0$ and $\rho_-<\rho^{r}(t,x)<\rho_+$,
$u_{1-}<u_1^{r}(t,x)<u_{1+}$ for $x\in\R$ and
$t\geq0$.

\noindent$(ii)$ For any $1\leq p\leq+\infty$, there exists a constant $C_p$ such that for $t>0$,
$$
\left\|\pa_x\left[\rho^{r}, u_1^{r},\ta^r\right]\right\|_{L^p}\leq C_p\min\left\{\delta_r\eps^{1-1/p}, \delta_r^{1/p}t^{-1+1/p}\right\},
$$
$$
\left\|\pa^j_x\left[\rho^{r}, u_1^{r},\ta^r\right]\right\|_{L^p}\leq C_p\min\left\{\delta_r\eps^{j-1/p}, \eps^{j-1-1/p}t^{-1}\right\},\ \ j\geq2.
$$
\noindent$(iii)$ $\lim\limits_{t\rightarrow+\infty}\sup\limits_{x\in\R}\left|\left[\rho^{r}, u_1^{r},\ta^r\right](t,x)-\left[\rho^R,u_1^R,\ta^R\right](x/t)\right|=0$.

\end{lemma}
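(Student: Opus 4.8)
The whole lemma is obtained by reducing the three quantities $[\rho^{r},u_1^{r},\theta^{r}](t,x)$ to a single smooth scalar function composed with the Burgers solution $w(t,x)$ of \eqref{cl.Re.cons.in}, and then invoking Lemma \ref{cl.Re.Re.}. The first and decisive step is to show that along the $3$-rarefaction curve $R_3(\rho_-,u_{1-},\theta_-)$ the third characteristic speed $\lambda_3$ is a strictly increasing function of $\rho$. Parametrizing the curve by $\rho$ through \eqref{def.r3}, so that $u_1=u_1(\rho)$ with $\frac{du_1}{d\rho}=\frac{1}{\rho}\sqrt{\partial_\rho P(\rho,S_i)+\partial_\rho P^\phi(\rho)}>0$ and $\theta=\frac32 A_i\rho^{2/3}$, one computes
\begin{equation*}
\frac{d}{d\rho}\,\lambda_3\bigl(\rho,u_1(\rho),S_i\bigr)
=\frac{1}{\rho}\sqrt{\partial_\rho P+\partial_\rho P^\phi}
+\frac{\partial_\rho^2 P+\partial_\rho^2 P^\phi}{2\sqrt{\partial_\rho P+\partial_\rho P^\phi}}.
\end{equation*}
Since $P=A_i\rho^{5/3}$ gives $\partial_\rho P>0$, $\partial_\rho^2 P>0$, and since $\partial_\rho P^\phi>0$, $\partial_\rho^2 P^\phi\geq0$ by \eqref{pr.pp} (this is exactly where $(\mathcal{A}_2)$ and $(\mathcal{A}_3)$ are used), both terms are strictly positive; hence $\rho\mapsto\lambda_3(\rho,u_1(\rho),S_i)$ is a smooth strictly increasing bijection from $[\rho_-,\rho_+]$ onto $[w_-,w_+]$, with $w_\pm=\lambda_3(\rho_\pm,u_{1\pm},S_i)$. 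I expect this genuine-nonlinearity verification to be the only nonroutine point; everything below is bookkeeping.

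Let $\rho^{r}=\Psi(w)$ denote the inverse of that bijection, which is smooth with $\Psi'>0$ on $[w_-,w_+]$, and set $U(w)=u_{1-}+\int_{\rho_-}^{\Psi(w)}\sqrt{\tfrac53 A_i\varrho^{-4/3}+\varrho^{-1}(\tfrac{d}{d\rho}\rho_e^{-1})(\varrho)}\,d\varrho$ and $\Theta(w)=\frac32 A_i\Psi(w)^{2/3}$, so that by \eqref{1-RW.def.} one has $[\rho^{r},u_1^{r},\theta^{r}](t,x)=\bigl(\Psi,U,\Theta\bigr)(w(t,x))$, and likewise by \eqref{Org.RW.} the centred wave is $[\rho^R,u_1^R,\theta^R](x/t)=\bigl(\Psi,U,\Theta\bigr)(w^R(x/t))$ with the same $\Psi,U,\Theta$. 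All three of $\Psi,U,\Theta$ are smooth and strictly increasing on $[w_-,w_+]$. Statement $(i)$ then follows immediately from Lemma \ref{cl.Re.Re.}$(i)$: $w_-<w(t,x)<w_+$ forces $\rho_-<\rho^{r}<\rho_+$, $u_{1-}<u_1^{r}<u_{1+}$, and $\partial_x u_1^{r}=U'(w)\,\partial_x w>0$ since $U'>0$ and $\partial_x w>0$.

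For $(ii)$, differentiating $[\rho^{r},u_1^{r},\theta^{r}]=(\Psi,U,\Theta)(w)$ by the Leibniz/Faà di Bruno rule expresses $\partial_x^j[\rho^{r},u_1^{r},\theta^{r}]$ as a finite sum of products of derivatives of $\Psi,U,\Theta$ (bounded on the compact interval $[w_-,w_+]$ by constants depending on $\delta_r$ and $\eta_0$) times products of $\partial_x^{k}w$ with $k\le j$ and $\sum k=j$. Plugging in the $L^p$ bounds of Lemma \ref{cl.Re.Re.}$(ii)$ and using $\overline{\delta}=w_+-w_-\sim\delta_r$ (because $\lambda_3$ is smooth with bounded, nondegenerate derivatives on the relevant compact region, so the curve length and the speed gap are comparable to $\delta_r$), together with the interpolation $\|\partial_x w\|_{L^\infty}\lesssim\eps$ and the crude bound $\|\partial_x^k w\|_{L^\infty}\lesssim\eps^{k-1}t^{-1}$ for $k\ge2$ to absorb the lower-order products, yields exactly the claimed estimates for $j=1$ and $j\ge2$. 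Finally $(iii)$ is the Lipschitz estimate $\sup_x\bigl|(\Psi,U,\Theta)(w(t,x))-(\Psi,U,\Theta)(w^R(x/t))\bigr|\le\bigl(\|\Psi'\|_\infty+\|U'\|_\infty+\|\Theta'\|_\infty\bigr)\sup_x|w(t,x)-w^R(x/t)|$, which tends to $0$ as $t\to+\infty$ by Lemma \ref{cl.Re.Re.}$(iii)$. It remains to record \eqref{ME2}, \eqref{pr}: the first equation of \eqref{ME2} is the mass law for the self-similar/shifted profile, and differentiating the defining relation $\lambda_3(\rho^{r},u_1^{r},S_i)=w$ in $t$ and $x$ and using $\partial_t w+w\partial_x w=0$ together with $\frac{du_1^{r}}{d\rho^{r}}=\frac1{\rho^{r}}\sqrt{\partial_\rho P^{r}+\partial_\rho P^\phi(\rho^{r})}$ reproduces the momentum equation in \eqref{ME2}; this is a direct calculation that I would carry out at the end.
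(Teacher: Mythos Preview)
Your proposal is correct and follows essentially the same route as the paper's proof: both reduce $[\rho^r,u_1^r,\theta^r]$ to smooth functions of the Burgers solution $w$ via the chain rule, the key point being that $\rho\mapsto\lambda_3(\rho,u_1(\rho),S_i)$ is strictly increasing on $[\rho_-,\rho_+]$ thanks to \eqref{pr.pp} (i.e.\ $(\mathcal{A}_2)$, $(\mathcal{A}_3)$), so its inverse is smooth. The paper writes this out concretely as the explicit formula \eqref{rut-w} for $\partial_x\rho^r$ in terms of $\partial_x w$ (where the denominator is precisely your $d\lambda_3/d\rho$) and then argues by induction for $j\ge 2$, while you phrase it via the inverse function $\Psi$ and Fa\`a di Bruno; these are the same computation.
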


\begin{proof}
We prove only $(ii)$ for brevity. Recalling \eqref{1-RW.def.} and \eqref{chara.}, one has
\begin{eqnarray*}
\left\{\begin{array}{rll}
\begin{split}
w=&u^r_1+\sqrt{(\pa_\rho P) (\rho^r,S_i) +\rho^r\left(\frac{d}{d\rho} (\rho_e^{-1})\right)(\rho^r)},\\
u^r_1&=u_{1-}
+\dis{\int_{\rho_-}^{\rho^r}}
\sqrt{\frac{5}{3} A_i \varrho^{-\frac{4}{3}}+\varrho^{-1}\left(\frac{d}{d\rho} (\rho_e^{-1})\right)(\varrho)}d\varrho,
\end{split}
\end{array}\right.
\end{eqnarray*}
from which as well as \eqref{1-RW.def.}, it follows that
\begin{eqnarray}\label{rut-w}
\left\{\begin{array}{rll}
\begin{split}
\pa_x \rho^r=&\frac{\pa_x w}{ \sqrt{\frac{5}{3} A_i (\rho^r)^{-\frac{4}{3}}+(\rho^r)^{-1}\left(\frac{d}{d\rho}(\rho_e^{-1})\right)(\rho^r)}
+\frac{(\pa^2_\rho P) (\rho^r,S_i)+\left(\frac{d}{d\rho} (\rho_e^{-1})\right)(\rho^r)+\rho^r\left(\frac{d^2}{d\rho^2} (\rho_e^{-1})\right)(\rho^r)}{2\sqrt{(\pa_\rho P) (\rho^r,S_i) +\rho^r\left(\frac{d}{d\rho} (\rho_e^{-1})\right)(\rho^r)}}}\\[2mm]
\pa_x u^r_1=&\sqrt{\frac{5}{3} A_i (\rho^r)^{-\frac{4}{3}}+(\rho^r)^{-1}\left(\frac{d}{d\rho} (\rho_e^{-1})\right)(\rho^r)}~\pa_x \rho^r\\[2mm]
\pa_x \ta^r=&A_i(\rho^r)^{-\frac{1}{3}}\pa_x\rho^r.
\end{split}
\end{array}\right.
\end{eqnarray}
On the other hand, by $(\CA_3)$, we see that
$$
\frac{d}{d\rho} (\rho_e^{-1})(\rho^r)+\rho^r\left(\frac{d^2}{d\rho^2} (\rho_e^{-1})\right)(\rho^r)\geq0.
$$
This together with the assumption $(\CA)$ and the definition $P(\rho,S_i)=k e^{S_i}\rho^{5/3}$ implies that the coefficient function of $\pa_x w$ on the right hand side of the first equation of \eqref{rut-w} is smooth in $\rho^r$ on the interval $[\rho_-,\rho_+]$ with $\rho_->0$. Thus, we can obtain
$$
\left\|\pa_x\rho^{r}\right\|_{L^p}\leq C\left\|\pa_x w\right\|_{L^p}\leq C_p\min\left\{\delta_r\eps^{1-1/p}, \delta_r^{1/p}t^{-1+1/p}\right\},
$$
and by an induction argument,
$$
\left\|\pa^j_x\rho^{r}\right\|_{L^p}\leq  C_p\min\left\{\delta_r\eps^{j-1/p}, \eps^{j-1-1/p}t^{-1}\right\},\ \ j\geq2.
$$
From the second and third equations of \eqref{rut-w}, the similar ones are true for $u_1^r$ and $\theta^r$.
Therefore $(ii)$ holds. This completes the proof of Lemma \ref{cl.Re.Re2.}.
\end{proof}

\section{The a priori estimates}
In this section, we will deduce the a priori energy estimates
for the Cauchy problem \eqref{VPB}, \eqref{BE.Idata1}, \eqref{VPB.b1}, \eqref{con.phi}, \eqref{VPB.b2}. First of all, let us define the macroscopic perturbation
$$
\left[\widetilde{\rho}, \widetilde{u}, \widetilde{\ta}, \widetilde{\phi}\right](t,x)=\left[\rho-\rho^{r},u-u^{r}, \ta-\ta^r,  \phi-\phi^{r}\right](t,x),
$$
as well as
$$
\widetilde{S}=S-S_i,
$$
where we recall that $[\rho^r,u^r,\theta^r,\phi^r]$ solving \eqref{MEt} is defined in \eqref{1-RW.def.} and \eqref{cl.Re.cons.in}, and $\widetilde{S}$ is given due to \eqref{BE.enp.} by
$$
\widetilde{S}=-\frac{2}{3} \ln \frac{\rho}{\rho^r}+\ln \frac{\theta}{\theta^r}.
$$
Then $\left[\widetilde{\rho}, \widetilde{u}, \widetilde{\ta},  \widetilde{\phi}\right](t,x)$ satisfies
\begin{eqnarray}
&&\pa_t\widetilde{\rho}+\pa_x(\rho u_1)-\pa_x(\rho^ru^r_1)=0,\label{trho0}\\
&&\pa_t \widetilde{u}_1+u_1\pa_x u_1-u^r_1\pa_x u^r_1+\frac{\pa_x P}{\rho}-\frac{\pa_xP^r}{\rho^r}+\pa_x\widetilde{\phi}
=\frac{3}{\rho}\pa_x\left(\mu(\theta)\pa_x u_1\right)
-\frac{1}{\rho}\int_{{\R}^3}\xi_1^2\pa_x\Theta\,d\xi,\label{tu10}\\
&&\pa_t \widetilde{u}_i+{u_1\pa_x\widetilde{u}_i}
=\frac{1}{\rho}\pa_x\left(\mu(\theta)\pa_x \widetilde{u}_i\right)-\frac{1}{\rho}\int_{{\R}^3}\xi_1\xi_i\pa_x\Theta\,d\xi,\
\ i=2,\ 3,\label{tui0}\\
&&\pa_t \widetilde{\theta}+u_1\pa_x\ta-u^r_1\pa_x\ta^r+\frac{P\pa_x u_1}{\rho}-\frac{P^r\pa_x u^r_1}{\rho^r}\notag\\&&
\quad=\frac{1}{\rho}\pa_x\left(\kappa(\theta)\pa_x \theta\right)+\frac{3}{\rho}\mu(\theta)(\pa_x u_1)^2+
\sum\limits_{i=2}^3\frac{1}{\rho}\mu(\theta)(\pa_x \widetilde{u}_i)^2
-\frac{1}{\rho}\int_{{\R}^3}\left(\frac{|\xi|^2}{2}-u\cdot\xi\right)\xi_1\pa_x\Theta\,d\xi,\label{tta0}\\
&&-\pa^2_x\widetilde{\phi}=\widetilde{\rho}+\rho_e(\phi^r)-\rho_e(\phi)
+\pa^2_x\phi^r,\label{tphy}\\
&&\left[\widetilde{\rho},\widetilde{u}, \widetilde{\ta}\right](0,x)=\left[\widetilde{\rho}_0,\widetilde{u}_0, \widetilde{\ta}_0\right](x)=\left[\rho_0(x)-\rho^r(0,x),u_0(x)-u^r(0,x),\ta_0(x)-\ta^r(0,x)\right],\label{tid}
\end{eqnarray}
as well as
\begin{eqnarray}
&&\pa_t \widetilde{S}+u_1\pa_x\widetilde{S}
=\frac{1}{\rho\theta}\pa_x\left(\kappa(\theta)\pa_x \theta\right)+\frac{3}{\rho\theta}\mu(\theta)(\pa_x u_1)^2+
\sum\limits_{i=2}^3\frac{1}{\rho\theta}\mu(\theta)(\pa_x u_i)^2\notag\\
&&\qquad\quad\qquad\qquad-\frac{1}{\rho\theta}\int_{{\R}^3}\left(\frac{|\xi|^2}{2}-u\cdot\xi\right)\xi_1\pa_x\Theta\,d\xi,\label{tS0}
\end{eqnarray}
where $P^r$ is defined as \eqref{pr} and $\Theta$ is given by \eqref{Ta.def}. We note that $\widetilde{u}_i=u_i$ for $i=2, 3$ and $\widetilde{\phi}(t,x)$ is determined by
the elliptic equation \eqref{tphy} under the boundary condition that $\widetilde{\phi}(t,x)\rightarrow0$ as $x\rightarrow\pm\infty$.
We also point out that the structural identity \eqref{tphy} will be of extremal importance for the later proof.

To the end  we use $\FM_i$ to denote $\FM_*$ or $\FM$ for brevity. Since
$$
\left\|\frac{\FG}{\sqrt{\FM_i}}\right\|^2_{L^2_{x,\xi}}
$$
is not integrable with respect to the time variable, it is necessary to consider the following perturbation
\begin{equation*}
\widetilde{\FG}=\FG-\overline{\FG},
\end{equation*}
where
\begin{equation}\label{def.ng}
\overline{\FG}=\frac{3 L_{\FM}^{-1}
\left\{{\bf P}_1^{\FM}\left[\xi_1\FM\left(\xi_1\pa_xu_1^r+\frac{|\xi-u|^2}{2\ta}\pa_x\ta^r\right)\right]\right\}}{2\ta}.
\end{equation}

To prove Theorem \ref{main.Res.}, the key point is to deduce the a priori energy estimates on the macroscopic part $\left[\widetilde{\rho}, \widetilde{u}, \widetilde{\ta}, \widetilde{\phi}\right]$ and the microscopic parts $\FG$ and $\widetilde{\FG}$ based on the following a priori assumption
\begin{equation}\label{aps}
\begin{split}
N^2(T)\equiv&\sup\limits_{0\leq t\leq T}
\left\|\left[\widetilde{\rho}, \widetilde{u}, \widetilde{\ta}\right](t)\right\|^2
+\sup\limits_{0\leq t\leq T}\sum\limits_{|\al|=1}
\left\|\pa^{\al}\left[\rho, u, \ta\right](t)\right\|^2
\\&+\sup\limits_{0\leq t\leq T} \sum\limits_{1\leq|\al|\leq
2}{\displaystyle
\int_{\R\times\R^3}} \frac{\left|\partial^{\al} F(t,x,\xi)\right|^2}{{\bf M}_*}dxd\xi
+\sup\limits_{0\leq t\leq T} {\displaystyle\int_{\R\times\R^3}}\frac{\left|\widetilde{\FG}(t,x,\xi)\right|^2}{{\bf M}_*}dxd\xi
\\&+\sup\limits_{0\leq t\leq T} \sum\limits_{|\al|+|\be|\leq
2\atop{|\be|\geq1}} {\displaystyle
\int_{\R\times\R^3}}
\frac{\left|\pa^{\al}\partial^\be\widetilde{\FG}(t,x,\xi)\right|^2}{{\bf M}_*}dxd\xi
+\eps\leq \eps_0^2,
\end{split}
\end{equation}
for an arbitrary positive time $T.$ Here we note that the above bound for $N(T)$ yields the following consequences. First, we have
\begin{equation}\label{aps2}
\begin{split}
\sup\limits_{0\leq t\leq T}&\sum\limits_{|\al|=2}
{\displaystyle\int_{\R\times\R^3}}\frac{\left|\partial^{\al}\FM(t,x,\xi)\right|^2}{{\bf M}}dxd\xi
+\sup\limits_{0\leq t\leq T} \sum\limits_{|\al|=
2} {\displaystyle\int_{\R\times\R^3}}\frac{\left|\partial^{\al}\FG(t,x,\xi)\right|^2}{{\bf M}}dxd\xi
 \\ \leq& 2\sup\limits_{0\leq t\leq T} \sum\limits_{|\al|=
2} \left|{\displaystyle\int_{\R\times\R^3}}\frac{\pa^{\al}\FM\partial^{\al}\FG(t,x,\xi)}{{\bf M}}dxd\xi \right|
+\sup\limits_{0\leq t\leq T} \sum\limits_{|\al|=
2} {\displaystyle\int_{\R\times\R^3}}\frac{\left|\partial^{\al} F(t,x,\xi)\right|^2}{{\bf M}_*}dxd\xi \\
\leq& C\sup\limits_{0\leq t\leq T} \sum\limits_{|\al'|=1,|\al|=2} \left(\int_{{\R}}\left|\pa^{\al'}[u,\ta](t)\right|^4dx\right)^{1/2}
\left(\int_{\R\times{\R}^3}\frac{|\partial^{\al}\FG(t,x,\xi)|^2}{\FM}dxd\xi \right)^{1/2}
\\&+\sup\limits_{0\leq t\leq T} \sum\limits_{|\al|=
2} {\displaystyle\int_{\R\times\R^3}}\frac{\left|\partial^{\al} F(t,x,\xi)\right|^2}{{\bf M}_*}dxd\xi
\\
\leq& {C\eps_0}\sum\limits_{1\leq|\al'|\leq
2} \sup\limits_{0\leq t\leq T}\left\|\pa^{\al'}[u,\ta](t)\right\|^2
+{C\eps_0}\sum\limits_{|\al|=2}\sup\limits_{0\leq t\leq T}\int_{{\R}\times{\R}^3}\frac{|\partial^{\al}\FG(t,x,\xi)|^2}{\FM}dxd\xi
\\&+\sup\limits_{0\leq t\leq T} \sum\limits_{|\al|=
2} {\displaystyle\int_{\R\times\R^3}}\frac{\left|\partial^{\al} F(t,x,\xi)\right|^2}{{\bf M}_*}dxd\xi .
\end{split}
\end{equation}
Moreover,
\begin{equation}\label{aps3}
\begin{split}
\sum\limits_{|\al|=
2}& \sup\limits_{0\leq t\leq T}\left\|\pa^{\al}[\rho,u,\ta](t)\right\|^2
\\ \leq& C\sup\limits_{0\leq t\leq T}\sum\limits_{|\al|=2}
{\displaystyle\int_{\R\times\R^3}}\frac{\left|\partial^{\al}\FM(t,x,\xi)\right|^2}{{\bf M}}dxd\xi
+C\sup\limits_{0\leq t\leq T}\sum\limits_{|\al'|=
1} \left\|\left|\pa^{\al'}\left[\rho,u,\ta\right](t)\right|^2\right\|^2\\
\leq& C\sup\limits_{0\leq t\leq T}\sum\limits_{|\al|=2}
{\displaystyle\int_{\R\times\R^3}}\frac{\left|\partial^{\al}\FM(t,x,\xi)\right|^2}{{\bf M}}dxd\xi
+C\eps_0^2\sup\limits_{0\leq t\leq T}\sum\limits_{1\leq|\al|\leq
2} \left\|\pa^{\al}[\rho,u,\ta](t)\right\|^2.
\end{split}
\end{equation}
Therefore \eqref{aps3} together with \eqref{aps2} imply
\begin{equation}\label{aps4}
\begin{split}
\sum\limits_{|\al|=
2} \sup\limits_{0\leq t\leq T}\left\|\pa^{\al}[\rho,u,\ta](t)\right\|^2
+\sup\limits_{0\leq t\leq T} \sum\limits_{|\al|=
2} {\displaystyle\int_{\R\times\R^3}}\frac{\left|\partial^{\al}\FG(t,x,\xi)\right|^2}{{\bf M}_\ast}dxd\xi
\leq C\eps_0^2.
\end{split}
\end{equation}
One can also see that \eqref{aps} and \eqref{aps4} lead to the following a priori estimate
\begin{equation}\label{aps.phy}
\sup\limits_{0\leq t\leq T}\left\{
\left\|\widetilde{\phi}(t)\right\|^2
+\sum\limits_{1\leq|\al|\leq2}
\left\|\pa^{\al}\phi(t)\right\|^2_{H^1}\right\}
\leq C\sup\limits_{0\leq t\leq T}\sum\limits_{ |\al|\leq
2}
\left\|\pa^\al\widetilde{\rho}(t)\right\|^2+C\eps\leq C\eps_0^2.
\end{equation}
In fact \eqref{aps.phy} follows from the standard elliptic estimates for the Poisson equation \eqref{tphy}.

\begin{remark}\label{rem.rjads}
Letting $\eps_0$ be small enough, by the a priori assumption \eqref{aps} and in view of \eqref{global.M.}, one sees that
\begin{eqnarray}\label{global.M2.}
\left\{\begin{array}{rll}
\frac{1}{2}\sup\limits_{(t,x)\in[0,T]\times{\R}}\ta(t,x)<\ta_*<\inf\limits_{(t,x)\in[0,T]\times{\R}}\ta(t,x),&\\[3mm]
\sup\limits_{(t,x)\in[0,T]\times{\R}}|\rho(t,x)-\rho_*|+|u(t,x)-u_*|+|\ta(t,x)-\ta_*|<\eta_0,&
\end{array}\right.
\end{eqnarray}
where $\eta_0$ is the constant given in \eqref{global.M.}.
We point out that \eqref{global.M2.} will be frequently used in the later energy estimates.
\end{remark}

The subsequent {two} subsections are devoted to deducing the desired energy type estimates based on the
a priori assumption (\ref{aps}) and the estimates on $[\rho^{r}, u_1^{r},\ta^r]$ in Lemma \ref{cl.Re.Re2.}.
The first one is concentrated on the energy estimates on the macroscopic part.

\subsection{Energy estimates on the macroscopic part}
In this subsection, we consider the energy estimates on
$\left[\widetilde{\rho}, \widetilde{u}, \widetilde{\ta}, \widetilde{\phi}\right](t,x)$. The main result is given as follows.

\begin{proposition}\label{mac.eng.lem.}
Assume that all the conditions in Theorem \ref{main.Res.} hold, and for $T>0$, $\Theta$ is given by \eqref{Ta.def}
with $\FG\in \widetilde{\CE}([0,T])$. Let $\left[\widetilde{\rho}, \widetilde{u}, \widetilde{\ta},  \widetilde{\phi}\right](t,x)$
be a smooth solution to the Cauchy problem
\eqref{trho0}, \eqref{tu10}, \eqref{tui0}, \eqref{tta0}, 
\eqref{tphy} and \eqref{tid} on $0\leq t\leq T$
and satisfy \eqref{aps}. Then there exist constants $0<\sigma_0<1/3$, {$\zeta_0>1$,  $\zeta_1>1$}, and an energy functional $\CE_1(\widetilde{\rho},\widetilde{u},\widetilde{\theta},\widetilde{\phi})$ with
\begin{equation*}
\CE_1(\widetilde{\rho},\widetilde{u},\widetilde{\theta},\widetilde{\phi})
\sim \sum\limits_{|\al|\leq1}\left\{\left\|\pa^{\al}\left[\widetilde{\rho}, \widetilde{u}, \widetilde{\ta}\right](t)\right\|^2
+\left\|\pa^{\al} \widetilde{\phi}(t)\right\|_{H^1}^2\right\},
\end{equation*}
such that the following energy estimate holds
\begin{equation}\label{macro.eng}
\begin{split}
\frac{d}{dt}&\CE_1(\widetilde{\rho},\widetilde{u},\widetilde{\theta},\widetilde{\phi})
-\ka_0\frac{d}{dt}\sum\limits_{|\al|=1}\left(\pa^{\al}\widetilde{u}_1,\pa^{\al}\pa_x\widetilde{\rho}\right)
\\&+\la\left\{\left\|\sqrt{\pa_xu_1^r}\left[ \widetilde{\rho}, \widetilde{u}_1, \widetilde{S}\right](t)\right\|^2
+\sum\limits_{1\leq|\al|\leq 2}\left\|\pa^{\al}\left[\widetilde{\rho},\widetilde{u},\widetilde{\theta}\right](t)\right\|^2
+\sum\limits_{1\leq|\al|\leq2}\left\|\pa^{\al}\widetilde{\phi}(t)\right\|_{H^1}^2\right\}\\
\lesssim&(1+t)^{-\zeta_1}\left\|\left[\widetilde{\rho},\widetilde{u},\widetilde{\ta},\widetilde{\phi}\right](t)\right\|^2
+\eps^{\sigma_0}(1+t)^{-\zeta_0}
+\sum\limits_{1\leq|\al|\leq2}\int_{\R\times{\R}^3}\frac{1+|\xi|}{\FM}|\pa^{\al}\FG|^2 dxd\xi
\\&+\eps_{0}\sum\limits_{|\al|\leq1}\int_{\R\times\R^3}\frac{|\pa_{\xi_1}\pa^{\al}\widetilde{\FG}|^2}{\FM}dxd\xi
+\eps_{0}\int_{{\R}\times{\R}^3}\frac{(1+|\xi|)\left|\widetilde{\FG}\right|^2}{\FM_{i}}dxd\xi ,
\end{split}
\end{equation}
where $\ka_0$ is a small positive constant.
\end{proposition}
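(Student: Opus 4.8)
The plan is to run a weighted energy estimate in the Eulerian coordinate, combining the relative-entropy method at the zeroth order with standard higher-order energy estimates, and to treat the Poisson coupling through the structural identity \eqref{tphy}. First I would construct the zeroth-order estimate: multiply the perturbed fluid system \eqref{trho0}--\eqref{tta0} by the natural multipliers coming from the relative entropy $\eta(v,u,\theta;v^r,u^r,\theta^r)$ in \eqref{def.entr}, i.e. $\frac{2}{3}\theta^r(v^{-1}-(v^r)^{-1})$-type terms for mass, $\widetilde u$ for momentum, and $\theta^{-1}\widetilde\theta$-type terms for energy. Integrating by parts produces the good positive term $\|\sqrt{\pa_xu_1^r}[\widetilde\rho,\widetilde u_1,\widetilde S]\|^2$ (using $\pa_x u_1^r>0$ from Lemma \ref{cl.Re.Re2.} and the uniform convexity of $P$ and $\theta$ in $[v,S]$), plus the viscous/heat dissipation $\|\pa_x\widetilde u\|^2+\|\pa_x\widetilde\theta\|^2$ coming from the Navier-Stokes terms, plus error terms. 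Crucially, because $P^\phi(\cdot)$ satisfies \eqref{pr.pp} (assumptions $(\CA_2)$, $(\CA_3)$), the extra pressure contributes with a favorable sign and makes ``no essential effect'' on this estimate, exactly as flagged in the key-points list.

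The heart of the argument is the potential term. The multiplier $\widetilde u_1$ in the momentum equation pairs $\pa_x\widetilde\phi$ against $\widetilde u_1$; after using mass conservation this becomes, up to time-derivative terms, an inner product of the form $\int_\R \pa_x[\rho(u_1-u_1^r)]\,(\phi-\phi^r)\,dx$. Here I would substitute the Poisson identity \eqref{tphy}, expand $\rho_e(\phi)-\rho_e(\phi^r)$ in Taylor series around $\phi^r$ to third order, and integrate by parts. The first-order term yields $\|\pa_x\widetilde\phi\|^2$ together with a term $(\rho_e'(\phi^r)\widetilde\phi,\widetilde\phi)$-type contribution; the second-order term yields the pair appearing in \eqref{J7}. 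By $(\CA_2)$ and $(\CA_3)$ the combination \eqref{J7} is non-positive, so it can be dropped (or kept as an extra good term). The third-order remainder is cubic and absorbed by smallness of $\eps_0$ via \eqref{aps.phy} and Sobolev embedding; the inhomogeneity $\pa_x^2\phi^r$ in \eqref{tphy} is controlled using the decay rates in Lemma \ref{cl.Re.Re2.}$(ii)$, producing the $\eps^{\sigma_0}(1+t)^{-\zeta_0}$ and $(1+t)^{-\zeta_1}\|[\widetilde\rho,\widetilde u,\widetilde\theta,\widetilde\phi]\|^2$ terms on the right. Standard elliptic estimates for \eqref{tphy} then upgrade control of $\widetilde\rho$ to control of $\widetilde\phi$ in the $H^1$-based norms appearing in $\CE_1$ and in the dissipation.

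Next I would do the first-order estimate: apply $\pa^\al$ with $|\al|=1$ (both $\pa_t$ and $\pa_x$) to the perturbed Navier-Stokes-Poisson system and pair with $\pa^\al[\widetilde\rho,\widetilde u,\widetilde\theta]$ in $L^2_x$. This gives $\|\pa^\al[\widetilde\rho,\widetilde u,\widetilde\theta]\|^2$ in the dissipation modulo: (i) commutator terms with the smooth profile, handled by Lemma \ref{cl.Re.Re2.}$(ii)$ and the a priori bound \eqref{aps}; (ii) the microscopic flux terms $\int \xi_1^2\pa_x\Theta\,d\xi$ etc., which after integration by parts in $x$ become $\int \xi\cdot\pa^\al\FG$-type expressions bounded by $\sum_{1\le|\al|\le2}\int (1+|\xi|)|\pa^\al\FG|^2/\FM$; and (iii) the potential coupling at first order, again closed via \eqref{tphy} and elliptic regularity, noting that $\pa^\al\widetilde\phi$ is controlled by $\pa^\al\widetilde\rho$ plus profile terms. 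To recover $\|\pa^\al\pa_x\widetilde\rho\|^2$ one needs the standard trick of adding a small multiple $-\ka_0\sum_{|\al|=1}\frac{d}{dt}(\pa^\al\widetilde u_1,\pa^\al\pa_x\widetilde\rho)$: differentiating the momentum equation, the $\frac{\pa_x P}{\rho}$ term produces $\pa_\rho P\,\|\pa^\al\pa_x\widetilde\rho\|^2>0$, while the time-derivative cross term and the remaining terms are absorbed for $\ka_0$ small. The quadratic terms with velocity derivatives at second order (e.g. $\pa_x\phi\pa_{\xi_1}\FG$ does not enter the fluid part but its macro-projection relatives do) contribute the $\eps_0$-weighted $\pa_{\xi_1}\pa^\al\widetilde\FG$ and $\widetilde\FG$ terms on the right-hand side, via the splitting $\pa_x\phi\pa_{\xi_1}\FG=\pa_x\phi\pa_{\xi_1}\overline\FG+\pa_x\phi\pa_{\xi_1}\widetilde\FG$ and the definition \eqref{def.ng} of $\overline\FG$.

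Finally, summing the zeroth- and first-order estimates with suitable small weights yields \eqref{macro.eng}, with $\CE_1$ the resulting (equivalent to $\sum_{|\al|\le1}\|\pa^\al[\widetilde\rho,\widetilde u,\widetilde\theta]\|^2+\|\pa^\al\widetilde\phi\|_{H^1}^2$) Lyapunov functional. \textbf{Main obstacle.} I expect the delicate point to be the potential term: precisely, making the third-order Taylor expansion of $\rho_e(\phi)$ in the inner product $\int\pa_x[\rho\widetilde u_1]\widetilde\phi\,dx$ and showing that the quadratic piece \eqref{J7} has a sign under $(\CA_2)$--$(\CA_3)$, while simultaneously matching the slow time-decay of $\phi^r$ against the decay rates from Lemma \ref{cl.Re.Re2.} so that the residual is integrable in $t$ after the weighted Cauchy-Schwarz. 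The cancellation structure (with the classical Boltzmann relation being the critical case) is exactly what allows one to close without losing a power of $t$, and reproducing it faithfully is where the argument is least routine.
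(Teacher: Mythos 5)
Your proposal follows essentially the same route as the paper's proof: a relative-entropy estimate at zeroth order whose quadratic form is positive thanks to $\pa_x u_1^r>0$ and \eqref{pr.pp}, the treatment of the potential coupling by rewriting $I_2=\int_\R\pa_x(\rho\widetilde u_1)\widetilde\phi\,dx$ through \eqref{tphy} with a third-order Taylor expansion of $\rho_e$ and the sign of \eqref{J7} under $(\CA_2)$--$(\CA_3)$, first-order estimates closed by the compensator $(\pa^\al\widetilde u_1,\pa^\al\pa_x\widetilde\rho)$, and the splitting $\FG=\overline\FG+\widetilde\FG$ for the force term. The only bookkeeping point you leave implicit is that the first-order density dissipation $\|\pa_x\widetilde\rho\|^2$ requires the same compensator trick one order lower, i.e. adding $\frac{d}{dt}(\widetilde u_1,\pa_x\widetilde\rho)$ together with the differentiated Poisson equation, which is how the paper's Step 2 proceeds.
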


\begin{proof} We divide it by the following three steps.

\medskip
\noindent{\bf Step 1.} {\it Zero-order energy estimates.}
It is known {(cf.~\cite{LYY}, for instance)} that the zero-order energy estimates for the Navier-Stokes type system \eqref{trho0}, \eqref{tu10}, {\eqref{tui0}} and \eqref{tta0}
can not be {directly} derived by the usual $L^2$ energy method. To overcome this difficulty, one way is to find and make use of a suitable entropy and entropy-flux. For this,
let us introduce an entropy
\begin{equation*}
\widetilde{\eta}=\widetilde{\eta}(\widetilde{\rho},\widetilde{u},\widetilde{\theta})=\widetilde{\ta}+\frac{1}{2}|\widetilde{u}|^2+\overline{P}^{\,r}\widetilde{v}-\theta^r\widetilde{S},
\end{equation*}
where $\widetilde{v}=v-v^r=\frac{1}{\rho}-\frac{1}{\rho^r}=-\frac{\widetilde{\rho}}{\rho\rho^r}$  and $\overline{P}^{\,r}=\overline{P}(v^r,S^r)=k e^{S^r}(v^r)^{-5/3}=P(\rho^r,S^r)$. Notice that the form of the relative entropy is consistent with \eqref{def.entr}, and the reason why we use the above equivalent form is that it seems more convenient for us to derive the consequent equations of $\widetilde{\eta}$.
One can see that
there is a constant $C_1>0$ such that
\begin{equation}
\label{en.iqv}
\frac{1}{C_1}\left\{\widetilde{\rho}^2+|\widetilde{u}|^2+\widetilde{\theta}^2\right\}\leq \widetilde{\eta}(\widetilde{\rho},\widetilde{u},\widetilde{\theta})\leq C_1\left\{\widetilde{\rho}^2+|\widetilde{u}|^2+\widetilde{\theta}^2\right\},
\end{equation}
according to the property of the pressure function $P$ and the a priori assumption \eqref{aps}.

In order to use $\widetilde{\eta}(\widetilde{\rho},\widetilde{u},\widetilde{\theta})$, we rewrite \eqref{trho0}, \eqref{tu10}, \eqref{tui0}, \eqref{tta0} and \eqref{tS0} respectively
as
\begin{eqnarray}
&&\pa_t\widetilde{v}-(v\pa_x u_1-v^r\pa_xu^r_1)+u_1\pa_xv-u^r_1\pa_xv^r=0,\label{trho}\\
&&\pa_t \widetilde{u}_1+u_1\pa_x u_1-u^r_1\pa_x u^r_1+v\pa_x \overline{P}-v^r\pa_x\overline{P}^{\,r}+\pa_x\widetilde{\phi}
=3v\pa_x\left(\mu(\theta)\pa_x u_1\right)
-v\int_{{\R}^3}\xi_1^2\pa_x\Theta \,d\xi,\label{tu1}\\
&&\pa_t \widetilde{u}_i+{u_1\pa_x\widetilde{u}_i}
=v\pa_x\left(\mu(\theta)\pa_x \widetilde{u}_i\right)-v\int_{{\R}^3}\xi_1\xi_i\pa_x\Theta \,d\xi,\
\ i=2,\ 3,\label{tui}\\
&&\pa_t \widetilde{\theta}+u_1\pa_x\ta-u^r_1\pa_x\ta^r+v\overline{P}\pa_x u_1-v^r\overline{P}^{\,r}\pa_x u^r_1\notag\\&&
\quad=v\pa_x\left(\kappa(\theta)\pa_x \theta\right)+3v\mu(\theta)(\pa_x u_1)^2+
\sum\limits_{i=2}^3v\mu(\theta)(\pa_x \widetilde{u}_i)^2
-v\int_{{\R}^3}\left(\frac{|\xi|^2}{2}-u\cdot\xi\right)\xi_1\pa_x\Theta \,d\xi,\label{tta}
\end{eqnarray}
and
\begin{eqnarray}
&&\pa_t \widetilde{S}+u_1\pa_x\widetilde{S}
=\frac{v}{\theta}\pa_x\left(\kappa(\theta)\pa_x \theta\right)+3\frac{v}{\theta}\mu(\theta)(\pa_x u_1)^2+
\sum\limits_{i=2}^3\frac{v}{\theta}\mu(\theta)(\pa_x u_i)^2\notag\\
&&\qquad\quad\qquad\qquad-\frac{v}{\theta}\int_{{\R}^3}\left(\frac{|\xi|^2}{2}-u\cdot\xi\right)\xi_1\pa_x\Theta \,d\xi.\label{tS}
\end{eqnarray}
Here $\overline{P}=\overline{P}(v,S)=k e^{S}v^{-5/3}=P(\rho,S)$.
In view of \eqref{trho}, \eqref{tu1}, \eqref{tui}, \eqref{tta}, \eqref{tS} and \eqref{tphy},
by a straightforward calculation, it follows that
\begin{equation}\label{b.entropy0}
\begin{split}
\pa_t&(\rho\widetilde{\eta})+\pa_x(\rho u_1\widetilde{\eta})
=\rho\pa_t\widetilde{\eta}+\rho u_1\pa_x\widetilde{\eta}
\\=&-\left(\overline{P}-\overline{P}^{\,r}-\pa_{v}\overline{P}(v^r,S^r)\widetilde{v}-\pa_{S}\overline{P}(v^r,S^r)\widetilde{S}\right)\pa_xu^r_{1}
-\rho\widetilde{u}^2_1\pa_xu^r_{1}-\rho\widetilde{u}_1\pa_x\widetilde{\phi}
\\&-\pa_x\left[\widetilde{u}_1(\overline{P}-\overline{P}^{\,r})\right]+\frac{\widetilde{v}^3}{vv^r}\pa_{v^r}\overline{P}^{\,r}\pa_xu^r_{1}
+\rho\widetilde{u}_1\overline{P}^{\,r}\pa_xv^r\widetilde{S}-\rho^r \widetilde{v}\overline{P}^{\,r}\pa_xu^r_{1}\widetilde{S}
-\widetilde{\rho}\widetilde{v}\overline{P}^{\,r}\pa_xu^r_{1}\widetilde{S}
\\&-\frac{\widetilde{v}^2}{v^r}\pa_{v^r}\overline{P}^{\,r}\pa_xu^r_{1}
+\frac{\widetilde{\ta}}{\theta}\pa_x\left(\kappa(\theta)\pa_x \theta\right)+3\frac{\widetilde{\ta}}{\theta}\mu(\theta)(\pa_x u_1)^2
+\sum\limits_{i=2}^3\frac{\widetilde{\ta}}{\theta}\mu(\theta)(\pa_x \widetilde{u}_i)^2
\\&-\frac{\widetilde{\ta}}{\theta}\int_{{\R}^3}\left(\frac{|\xi|^2}{2}-u\cdot\xi\right)\xi_1\pa_x\Theta \,d\xi
+3\widetilde{u}_1\pa_x\left(\mu(\theta)\pa_x u_1\right){+\sum\limits_{i=2}^3\widetilde{u}_i\pa_x\left(\mu(\theta)\pa_x \widetilde{u}_i\right)}
\\&-\widetilde{u}_1\int_{{\R}^3}\xi_1^2\pa_x\Theta \,d\xi{-\sum\limits_{i=2}^3\widetilde{u}_i\int_{{\R}^3}\xi_1\xi_i\pa_x\Theta \,d\xi},
\end{split}
\end{equation}
where we have also used the fact that $\pa_{v^r}\overline{P}^{\,r}=\pa_v\overline{P}(v^r,S^r)$, $\overline{P}^{\,r}=\pa_S\overline{P}(v^r,S^r)$,
$S^r=S_i=\textrm{constant}$, and $\pa_x\ta^r=-\overline{P}^{\,r}\pa_xv^r.$

Then \eqref{b.entropy0} and \eqref{tui}  imply that
\begin{equation}\label{b.entropy}
\begin{split}
\frac{d}{dt}&\int_{\R}(\rho\widetilde{\eta})(t,x)dx
+\underbrace{\int_{\R}\left(\overline{P}-\overline{P}^{\,r}-\pa_{v}\overline{P}(v^r,S^r)\widetilde{v}
-\pa_{S}\overline{P}(v^r,S^r)\widetilde{S}\right)\pa_xu^r_{1}dx}_{I_1}
+\int_{\R}(v^r)^{-1}\widetilde{u}^2_1\pa_xu^r_{1}dx
\\&
+3\int_{\R}\mu(\theta)\left(\pa_x \widetilde{u}_1\right)^2dx
+{\sum\limits_{i=2}^3\int_{\R}\mu(\theta)\left(\pa_x \widetilde{u}_i\right)^2dx}
+\int_{\R}\frac{\kappa(\theta)}{\ta}\left(\pa_x \widetilde{\theta}\right)^2dx
\\=&\underbrace{-\int_{\R}\rho\widetilde{u}_1\pa_x\widetilde{\phi}dx}_{I_2}
\underbrace{+3\int_{\R}\mu(\theta)\frac{\widetilde{\theta}}{\theta}(\pa_x u_1)^2dx
+\sum\limits_{i=2}^3\int_{\R}\mu(\theta)\frac{\widetilde{\theta}}{\theta}\left(\pa_x \widetilde{u}_i\right)^2dx}_{I_3}
\\&\underbrace{-\int_{\R}\int_{{\R}^3}\xi_1^2\widetilde{u}_1\pa_x\Theta dxd\xi
-{\sum\limits_{i=2}^3\int_{\R}\int_{{\R}^3}\xi_1\xi_i\widetilde{u}_i\pa_x\Theta dxd\xi}
-\int_{\R}\int_{{\R}^3}\frac{\widetilde{\ta}}{\ta}\left(\frac{|\xi|^2}{2}-u\cdot\xi\right)\xi_1\pa_x\Theta dxd\xi }_{I_{4}}
\\&\underbrace{+3\int_{\R}\widetilde{u}_1\pa_x\left(\mu(\theta)\pa_x u^r_1\right)dx
+\int_{\R}\frac{\widetilde{\theta}}{\theta}\pa_x\left(\kappa(\theta)\pa_x \theta^r\right)dx
+\int_{\R}\kappa(\theta)\frac{\widetilde{\theta}\pa_x\ta}{\theta^2}\pa_x \widetilde{\theta}dx}_{I_5}
\\&\underbrace{-\int_{\R}\rho^r \widetilde{v}\overline{P}^{\,r}\pa_xu^r_{1}\widetilde{S}dx
-\int_{\R}\frac{\widetilde{v}^2}{v^r}\pa_{v^r}\overline{P}^{\,r}\pa_xu^r_{1}dx}_{I_6}
\underbrace{+\int_{\R}\rho\widetilde{u}_1\overline{P}^{\,r}\pa_xv^r\widetilde{S}dx}_{I_7}
\\&\underbrace{-\int_{\R}\widetilde{\rho}\widetilde{v}\overline{P}^{\,r}\pa_xu^r_{1}\widetilde{S}dx
+\int_{\R}\frac{\widetilde{v}^3}{vv^r}\pa_{v^r}\overline{P}^{\,r}\pa_xu^r_{1}dx-\int_{\R}\widetilde{\rho}\widetilde{u}^2_1\pa_xu^r_{1}dx}_{I_{8}}.
\end{split}
\end{equation}
We now turn to compute $I_l$ $(1\leq l\leq 8)$ term by term. The procedure of the proof can be outlined as follows.
Since $I_6$, $I_7$ and $I_{8}$ depend on $I_1$ and the {third} term on the
left hand side of \eqref{b.entropy}, we first estimate $I_1$, $I_6$, $I_7$ and $I_{8}$ by putting them together and taking full advantage of the non-negativity of $I_1$. Then we compute $I_2$, which needs to be treated carefully too. The estimations for $I_3$, $I_4$ and $I_5$ will be much easier and thus left to the end of this step.


\begin{lemma}\label{lem3.1}
\begin{equation}\label{I1sum}
\int_{\R}(v^r)^{-1}\widetilde{u}^2_1\pa_xu^r_{1}dx+I_1-I_6-I_7-I_{8}
\geq \la \int_{\R}\left|\left[\widetilde{v},\widetilde{u}_1,\widetilde{S}\right]\right|^2\pa_xu^r_{1}dx.
\end{equation}
\end{lemma}

\begin{proof}
Noticing that $\overline{P}(v,S)=kv^{-5/3}e^{S}$ and $S^r=S_i=\textrm{constant}$, we have
\begin{equation*}
\begin{split}
I_1=&\frac{1}{2}\int_{\R} \left\{\pa_{v}^2\overline{P}(v^r,S^r)\widetilde{v}^2
+2\frac{\pa^2 \overline{P}}{\pa v\pa S}(v^r,S^r)\widetilde{v}\widetilde{S}+\pa_{S}^2\overline{P}(v^r,S^r)\widetilde{S}^2\right\}\pa_x u^r_1 dx+O(1)\int_{\R}\left|\left[\widetilde{v}^3,\widetilde{S}^3\right]\right|\pa_x u^r_1dx
\\=&A_i\underbrace{\int_{\R}\left\{\frac{20}{9}(v^r)^{-\frac{11}{3}}\widetilde{v}^2-\frac{5}{3}(v^r)^{-\frac{8}{3}}\widetilde{v}\widetilde{S}
+\frac{1}{2}(v^r)^{-\frac{5}{3}}\widetilde{S}^2\right\}\pa_x u^r_1 dx}_{I_0}+O(1)\underbrace{\int_{\R}\left|\left[\widetilde{v}^3,\widetilde{S}^3\right]\right|\pa_x u^r_1 dx}_{I_{9}}.
\end{split}
\end{equation*}
By a simple calculation, we obtain
\begin{equation*}
\begin{split}
I_6=&A_i\int_{\R}\left\{\frac{5}{3}(v^r)^{-\frac{11}{3}}\widetilde{v}^2-(v^r)^{-\frac{8}{3}}\widetilde{v}\widetilde{S}\right\}\pa_x u^r_1 dx.
\end{split}
\end{equation*}
On the other hand, by virtue of \eqref{1-RW.def.}, one can see that
$$
\pa_xv^r=-\frac{(v^r)^2}{\sqrt{\frac{5}{3} A_i (v^{r})^{\frac{4}{3}}+v^r\left(\frac{d}{d\rho}(\rho_e^{-1})\right)(1/v^r)}}\pa_xu_1^r,
$$
from which, it follows that
\begin{equation*}
\begin{split}
I_7=&-\int_{\R}\rho\widetilde{u}_1\overline{P}^{\,r}\pa_xu^r_1\widetilde{S}\frac{(v^r)^2}{\sqrt{\frac{5}{3} A_i (v^{r})^{\frac{4}{3}}
+v^r\left(\frac{d}{d\rho}(\rho_e^{-1})\right)(1/v^r)}}~dx
\\ =&\underbrace{-\int_{\R}\widetilde{u}_1\overline{P}^{\,r}\pa_xu^r_1\widetilde{S}\frac{v^r}{\sqrt{\frac{5}{3} A_i (v^{r})^{\frac{4}{3}}
+v^r\left(\frac{d}{d\rho}(\rho_e^{-1})\right)(1/v^r)}}~dx}_{I_{10}}
\\&\underbrace{-\int_{\R}\widetilde{\rho}\widetilde{u}_1\overline{P}^{\,r}\pa_xu^r_1\widetilde{S}\frac{(v^r)^2}{\sqrt{\frac{5}{3} A_i (v^{r})^{\frac{4}{3}}+v^r\left(\frac{d}{d\rho}(\rho_e^{-1})\right)(1/v^r)}}~dx}_{I_{11}}.
\end{split}
\end{equation*}
Consequently,
\begin{equation}\label{mdef}
\begin{split}
\int_{\R}&(v^{r})^{-1}\widetilde{u}^2_1\pa_xu^r_{1}dx+I_0-I_6-I_{10}
=\int_\R \left[\widetilde{v},\widetilde{u}_1,\widetilde{S}\right] M \left[\widetilde{v},\widetilde{u}_1,\widetilde{S}\right]^T\pa_xu_1^r\,dx,
\end{split}
\end{equation}
with the real symmetric matrix $M$ given by
\begin{equation*}
\begin{pmatrix}
\   \frac{5}{9} A_i (v^r)^{-\frac{11}{3}} &\ \ \ \  -\frac{1}{3} A_i (v^r)^{-\frac{8}{3}} &\ \  0\\[6mm]
\   * & \frac{1}{2} A_i (v^r)^{-\frac{5}{3}} &\ \ \  \frac{\frac{1}{2} A_i (v^r)^{-\frac{2}{3}}}{\sqrt{\frac{5}{3} A_i (v^r)^{\frac{4}{3}}
+v^r\left(\frac{d}{d\rho}(\rho_e^{-1})\right)(1/v^r)}}\\[6mm]
\   0 & *  & (v^r)^{-1}
\end{pmatrix}.
\end{equation*}
It is straightforward to check that $M$ is positive-definite, since its all leading principal minors are strictly positive, i.e.
\begin{eqnarray*}
&&\De_{11}>0,\\
&&\De_{22}=\frac{1}{6}A_i^2 (v^r)^{-\frac{16}{3}}>0,\\
&&\De_{33}=\frac{1}{6}A_i^2 (v^r)^{-\frac{19}{3}} -\frac{5}{36} A_i^3 \frac{(v^r)^{-\frac{15}{3}}}{ \frac{5}{3} A_i (v^r)^{\frac{4}{3}}
+ v^r\left(\frac{d}{d\rho}(\rho_e^{-1})\right)(1/v^r)}>0.
\end{eqnarray*}
Recalling Sobolev's inequality,
\begin{equation}\label{sob.ine.}
\|f\|_{L^\infty}\leq\sqrt{2}\|f\|^{1/2}\|\pa_xf\|^{1/2} \ \  \textrm{for any}\ \ f\in H^1,
\end{equation}
we see that $I_{9}$ can be controlled by
$$
C\eps_0\left\|\sqrt{\pa_x u^r_1} \left[\widetilde{v},\widetilde{S}\right]\right\|^2,
$$
according to \eqref{aps}.

Similarly, for $I_{8}$ and $I_{11}$, one has
$$
|I_{8}|+|I_{11}|\lesssim A_i\eps_0\int_{\R}\left\{(v^r)^{-\frac{11}{3}}\widetilde{v}^2
+(v^r)^{-\frac{5}{3}}\widetilde{S}^2+\widetilde{u}_1^2\right\}\pa_x u^r_1 dx.
$$
Combing the above estimates on $I_1$, $I_6$, $I_7$, $I_{8}$, $I_{9}$, $I_{10}$ and $I_{11}$, we thus arrive at \eqref{I1sum}. The proof of Lemma \ref{lem3.1} is complete.
\end{proof}

Let us now consider the most delicate term $I_2$.
The key technique to handle $I_2$ is to use the good dissipative property of
the Poisson equation by expanding $\rho_e(\phi)$ around the asymptotic profile up to the third-order.
Only in this way, can we observe some new cancelations and obtain the higher order nonlinear terms.

\begin{lemma}\label{lem3.2}
\begin{multline}\label{I2sum}
\left|I_2+\frac{d}{dt}\left[\frac{1}{2}\left(\pa_x\widetilde{\phi},\pa_x \widetilde{\phi}\right)+\frac{1}{2}\left(\widetilde{\phi}^2,\rho_{e}'(\phi^r)\right)+\frac{1}{3}\left(\widetilde{\phi}^3,\rho_{e}''(\phi^r)\right)\right]\right.\\
\left.-\frac{1}{2}\left(\widetilde{\phi}^2, \left( \rho_{e}''(\phi^r)\frac{d\phi^r}{d\rho^r}\rho^r-\rho_{e}'(\phi^r)\right)\pa_xu_1^r\right)\right|
\\
\lesssim (\eta+\eps_0C_\eta +\eps_0^2) {\left\|\pa_x \left[\widetilde{\rho}, \widetilde{u}_1,\widetilde{\phi}\right]\right\|^2}
+C_\eta(1+t)^{-2}\|[\widetilde{\rho},\widetilde{u}_1]\|^2\\
+C_\eta(1+t)^{-4/3}\left\|\widetilde{\phi}\right\|^2+\eps C_\eta (1+t)^{-2}.
\end{multline}
\end{lemma}

\begin{proof}
In light of \eqref{trho0} and \eqref{tphy} and by integration by parts, one has
\begin{equation}\label{I2}
\begin{split}
I_2=\int_{\R}\pa_x(\rho\widetilde{u}_1)\widetilde{\phi}\,dx
=\left(\widetilde{\phi},\pa_t\pa_x^2\widetilde{\phi}^2\right)
\underbrace{+\left(\widetilde{\phi},\pa_t\left(\rho_e(\phi^r)-\rho_e(\phi)\right)\right)}_{I_{2,1}}
+\left(\widetilde{\phi},\pa_t\pa_x^2\phi^r\right)
\underbrace{-\left(\widetilde{\phi},\pa_x(\widetilde{\rho}u^r_1)\right)}_{I_{2,2}}.
\end{split}
\end{equation}
We now turn to compute the right hand side of \eqref{I2} term by term. It is straightforward to see that
\begin{equation*}
\left(\widetilde{\phi}, \pa_t\pa_x^2\widetilde{\phi}\right)=-\frac{1}{2}\frac{d}{dt}\left(\pa_x\widetilde{\phi}, \pa_x\widetilde{\phi}\right).
\end{equation*}
For the third term on the right hand side of \eqref{I2}, by integration by parts and employing Lemma \ref{cl.Re.Re2.} and Cauchy-Schwarz's inequality with $0<\eta<1$, we obtain
\begin{equation*}
\left|\left(\widetilde{\phi},\pa_t\pa_x^2\phi^r\right)\right|\leq \eta\left\|\pa_x\widetilde{\phi}\right\|^2+{C_\eta}\eps(1+t)^{-2}.
\end{equation*}

\medskip
\noindent{\it Estimates on $I_{2,1}$.}
For $I_{2,1}$, we first get from the Taylor's formula with an integral remainder
that
\begin{equation}\label{taylor}
\rho_{e}(\phi^r)-\rho_{e}(\phi)=-\rho'_{e}(\phi^r)\widetilde{\phi}-\frac{1}{2}\rho_{e}''(\phi^r)\widetilde{\phi}^2
\underbrace{-\int_{\phi^r}^{\phi}\frac{(\varrho-\phi)^2}{2}\rho_{e}'''(\varrho) d\varrho}_{J_1}.
\end{equation}
Then it follows that
\begin{equation}\label{ztu.ip3}
\begin{split}
I_{2,1}=-\left(\widetilde{\phi}, \pa_t\left(\rho_{e}'(\phi^r)\widetilde{\phi}\right)\right)
-\frac{1}{2}\left(\widetilde{\phi}, \pa_t\left(\rho_{e}''(\phi^r)\widetilde{\phi}^2\right)\right)
+\left(\widetilde{\phi}, \pa_tJ_1\right).
\end{split}
\end{equation}
To compute the right hand side of \eqref{ztu.ip3}, we first consider $\left(\widetilde{\phi}, \pa_tJ_1\right)$. Note that
\begin{multline}\label{tJ1}
J_1\sim \widetilde{\phi}^3,\ \ \pa_t J_1=\pa_t\phi\int_{\phi^r}^{\phi}(\varrho-\phi)\rho_{e}'''(\varrho) d\varrho
+\frac{1}{2}\widetilde{\phi}^2\pa_t\phi^r\int_{\phi^r}^{\phi}\rho_{e}'''(\varrho) d\varrho
\\ \sim \pa_t\phi \widetilde{\phi}^2+\pa_t\phi^r\widetilde{\phi}^3= \pa_t\widetilde{\phi} \widetilde{\phi}^2
+\pa_t\phi^r\widetilde{\phi}^2+\pa_t\phi^r\widetilde{\phi}^3.
\end{multline}
In addition, it follows from \eqref{tphy} that
\begin{multline*}
-\left(\pa_t\pa^2_x\widetilde{\phi}
,\pa_t\widetilde{\phi}\right)+
\left(\rho_{e}'(\phi^r)\pa_t\widetilde{\phi},\pa_t\widetilde{\phi}\right)\\
=\left(\pa_t\widetilde{\rho},\pa_t\widetilde{\phi}\right)-\left(\pa_t\left(\rho_{e}'(\phi^r)\right)\widetilde{\phi},\pa_t\widetilde{\phi}\right)
-\frac{1}{2}\left(\pa_t\left(\rho_{e}''(\phi^r)\widetilde{\phi}^2\right),\pa_t\widetilde{\phi}\right)
+\left(\pa_t J_1,\pa_t\widetilde{\phi}\right)
+\left(\pa_t\pa^2_x\phi^r,\pa_t\widetilde{\phi}\right),
\end{multline*}
which implies
\begin{equation}\label{ptphyL2}
\left\|\sqrt{\rho_{e}'(\phi^r)}\pa_t\widetilde{\phi}\right\|^2+\left\|\pa_t\pa_x\widetilde{\phi}\right\|^2\leq
C\left\|\pa_x\left[\widetilde{\rho},\widetilde{u}_1\right]\right\|^2
+C(1+t)^{-2}{\left\|\widetilde{\phi}\right\|^2}
+C\eps(1+t)^{-2},
\end{equation}
according to $\eqref{trho0}$, \eqref{sob.ine.}, \eqref{tJ1}, Lemma \ref{cl.Re.Re2.} and Cauchy-Schwarz's inequality. 

With  \eqref{tJ1} and \eqref{ptphyL2} in hand, we get from Lemma \ref{cl.Re.Re2.} and H\"older's inequality as well as the Cauchy-Schwarz  inequality with $0<\eta<1$ that
\begin{equation*}
\begin{split}
\left|\left(\widetilde{\phi}, \pa_tJ_1\right)\right|\lesssim& C_\eta\left\|\widetilde{\phi}^3\right\|^2+\left\|\widetilde{\phi}\pa_x[\rho^r,u_1^r]\right\|^2+\eta\|\pa_t\widetilde{\phi}\|^2
+C\left|\left(\widetilde{\phi}^3, \pa_x[\rho^r,u_1^r]\right)\right|\\
\lesssim& \max\{\eps_0C_\eta,\eta\}\left\|\pa_x\left[\widetilde{\rho},\widetilde{u}_1,\widetilde{\phi}\right]\right\|^2
+(1+t)^{-4/3}\left\|\widetilde{\phi}\right\|^2
+\eps(1+t)^{-2},
\end{split}
\end{equation*}
where Sobolev's inequality \eqref{sob.ine.} has been also used to obtain the bounds:
\begin{equation}\label{J1.p1}
\left\|\widetilde{\phi}^3\right\|^2\lesssim \left\|\pa_x\widetilde{\phi}\right\|^2\left\|\widetilde{\phi}\right\|^4\lesssim \eps_0^4\left\|\pa_x\widetilde{\phi}\right\|^2,
\end{equation}
and 
\begin{equation}\label{J1.p2}
\begin{split}
\left|\left(\widetilde{\phi}^3, \pa_x[\rho^r,u_1^r]\right)\right|\lesssim& \left\|\pa_x[\rho^r,u_1^r]\right\|_{L^\infty}\left\|\widetilde{\phi}\right\|^{1/2}\left\|\pa_x\widetilde{\phi}\right\|^{1/2}\left\|\widetilde{\phi}\right\|^2\\
\lesssim & \left\|\widetilde{\phi}\right\|^{2}\left\|\pa_x\widetilde{\phi}\right\|^{2}
+\|\pa_x[\rho^r,u_1^r]\|^{4/3}_{L^\infty}\left\|\widetilde{\phi}\right\|^{8/3}
\\ \lesssim&\eps_0^2\left\|\pa_x\widetilde{\phi}\right\|^{2}+\eps_0^{2/3}(1+t)^{-4/3}\left\|\widetilde{\phi}\right\|^2.
\end{split}
\end{equation}
As to the first two terms on the right hand side of \eqref{ztu.ip3},
invoking the first equation of \eqref{ME2},
we obtain
\begin{equation*}
\begin{split}
-\left(\widetilde{\phi}, \pa_t\left(\rho_{e}'(\phi^r)\widetilde{\phi}\right)\right)=&-\frac{1}{2}\frac{d}{dt}\left(\widetilde{\phi}^2, \rho_{e}'(\phi^r)\right)-\frac{1}{2}\left(\widetilde{\phi}^2, \pa_t\left(\rho_{e}'(\phi^r)\right)\right)
\\=&-\frac{1}{2}\frac{d}{dt}\left(\widetilde{\phi}^2, \rho_{e}'(\phi^r)\right)\underbrace{+\frac{1}{2}\left(\widetilde{\phi}^2, \rho_{e}''(\phi^r)\frac{d\phi^r}{d\rho^r}\pa_x(\rho^ru_1^r)\right)}_{J_2},
\end{split}
\end{equation*}
and
\begin{equation*}
\begin{split}
-\frac{1}{2}\left(\widetilde{\phi}, \pa_t\left(\rho_{e}''(\phi^r)\widetilde{\phi}^2\right)\right)=&-\frac{1}{3}\frac{d}{dt}\left(\widetilde{\phi}^3, \rho_{e}''(\phi^r)\right)-\frac{1}{6}\left(\widetilde{\phi}^3,
\pa_t (\rho_e''(\phi^r))
\right)
\\=&-\frac{1}{3}\frac{d}{dt}\left(\widetilde{\phi}^3, \rho_{e}''(\phi^r)\right)+\underbrace{\frac{1}{6}\left(\widetilde{\phi}^3, \rho_{e}'''(\phi^r)\frac{d\phi^r}{d\rho^r}\pa_x(\rho^ru_1^r)\right)}_{J_3}.
\end{split}
\end{equation*}
Here $J_3$ can be treated as in \eqref{J1.p1}.
It is worthwhile pointing out that $J_2$ can not be directly controlled for the time being, and its estimate should be postponed to
the subsequent estimates on $I_{2,2}$  by an exact cancellation with other terms.

\medskip
\noindent {\it Estimates on $I_{2,2}$.}
As to $I_{2,2}$, we have from \eqref{tphy} that
\begin{equation*}
\begin{split}
-\left(\widetilde{\phi}, \pa_x(\widetilde{\rho}u_1^r)\right)
&=-\left(\widetilde{\phi}, \pa_x\widetilde{\rho}u_1^r\right)-\left(\widetilde{\phi}, \widetilde{\rho}\pa_xu_1^r\right)\\
&=\left(\widetilde{\phi}, \left(\pa_x^3\widetilde{\phi}
+\pa_x\left(\rho_{e}(\phi^r)-\rho_{e}(\phi)\right)+\pa_x^3\phi^r\right)u_1^r\right)\\
&\quad +\left(\widetilde{\phi}, \left(\pa_x^2\widetilde{\phi}
+\rho_{e}(\phi^r)-\rho_{e}(\phi)+\pa_x^2\phi^r\right)\pa_xu_1^r\right)\\
&=\underbrace{\frac{1}{2}\left(\pa_xu_1^r, \left(\pa_x\widetilde{\phi}\right)^2\right)-\left(\pa_x\widetilde{\phi},\pa_x^2\phi^ru_1^r\right)}_{J_4}
\underbrace{+\left(\widetilde{\phi},
\pa_x\left(\rho_{e}(\phi^r)-\rho_{e}(\phi)\right)u_1^r\right)}_{J_5}\\
&\quad \underbrace{+\left(\widetilde{\phi}, \left(\rho_{e}(\phi^r)-\rho_{e}(\phi)\right)\pa_xu_1^r\right)}_{J_6},
\end{split}
\end{equation*}
where the last identity holds true due to the following identities:
$$
\left(\widetilde{\phi}, \pa_x^3\widetilde{\phi}
u_1^r\right)+\left(\widetilde{\phi}, \pa_x^2\widetilde{\phi}
\pa_x u_1^r\right)=\frac{1}{2}\left(\pa_xu_1^r, \left(\pa_x\widetilde{\phi}\right)^2\right),
$$
and
$$
\left(\widetilde{\phi}, \pa_x^3\phi^ru_1^r\right)
+\left(\widetilde{\phi}, \pa_x^2\phi^r\pa_xu_1^r\right)=-\left(\pa_x\widetilde{\phi},\pa_x^2\phi^ru_1^r\right).
$$
Notice that $|J_4|$ is dominated by
$$
(\eta+{C\eps})\left\|\pa_x\widetilde{\phi}\right\|^2+C_\eta\eps(1+t)^{-2},
$$
according to {the} Cauchy-Schwarz  inequality with $0<\eta<1$ and Lemma \ref{cl.Re.Re2.}.

We now use \eqref{taylor} to expand $J_5$ and $J_6$ respectively as 
\begin{equation*}
\begin{split}
J_5
=&-\left(\widetilde{\phi}, \pa_x\left(\rho_{e}'(\phi^r)\widetilde{\phi}\right)u_1^r\right)
-\frac{1}{2}\left(\widetilde{\phi}, \pa_x\left(\rho_{e}''(\phi^r)\widetilde{\phi}^2\right)u_1^r\right)
+\left(\widetilde{\phi}, \pa_xJ_1u_1^r\right)\\
=&-\frac{1}{2}\left(\widetilde{\phi}^2, \rho_{e}''(\phi^r)\frac{d\phi^r}{d\rho^r}\pa_x\rho^ru_1^r\right)
+\frac{1}{2}\left(\widetilde{\phi}^2, \rho_{e}'(\phi^r)\pa_xu_1^r\right)
\\&-\frac{1}{6}\left(\widetilde{\phi}^3, \rho_{e}'''(\phi^r)\frac{d\phi^r}{d\rho^r}\pa_x\rho^ru_1^r\right)+\frac{1}{3}\left(\widetilde{\phi}^3, \rho_{e}''(\phi^r)\pa_xu_1^r\right)
+\left(\widetilde{\phi}, \pa_xJ_1u_1^r\right),
\end{split}
\end{equation*}
and
\begin{equation*}
\begin{split}
J_6=-\left(\widetilde{\phi}^2, \rho_{e}'(\phi^r)\pa_xu_1^r\right)
-\frac{1}{2}\left(\widetilde{\phi}^3, \rho_{e}''(\phi^r)\pa_xu_1^r\right)
+\left(\widetilde{\phi}, J_1\pa_xu_1^r\right).
\end{split}
\end{equation*}
Owing to these, we find
\begin{equation}\label{rjadcintr}
\begin{split}
J_2+J_5+J_6=&\underbrace{\frac{1}{2}\left(\widetilde{\phi}^2, \rho_{e}''(\phi^r)\frac{d\phi^r}{d\rho^r}\rho^r\pa_x u_1^r\right)
-\frac{1}{2}\left(\widetilde{\phi}^2, \rho_{e}'(\phi^r)\pa_xu_1^r\right)}_{J_7}
\\&+\underbrace{\left(\widetilde{\phi}, \pa_xJ_1u_1^r\right)
+\left(\widetilde{\phi}, J_1\pa_xu_1^r\right)}_{J_8}
\underbrace{-\frac{1}{6}\left(\widetilde{\phi}^3, \rho_{e}'''(\phi^r)\frac{d\phi^r}{d\rho^r}\pa_x\rho^ru_1^r\right)-\frac{1}{6}\left(\widetilde{\phi}^3, \rho_{e}''(\phi^r)\pa_xu_1^r\right)}_{J_9}.
\end{split}
\end{equation}
Due to the assumptions $(\CA_2)$, $(\CA_3)$, and $\pa_x u_1^r>0$, one can see that
$$
J_7=\frac{1}{2}\left(\widetilde{\phi}^2\pa_x u_1^r, \frac{\rho_{e}''(\phi^r)\rho_e(\phi^r)-[\rho_e'(\phi^r)]^2}{\rho_e'(\phi^r)}\right)\leq 0.
$$
For $J_8$, from \eqref{J1.p1}, it follows that
$$
|J_8|=\left|-\left(\pa_x\widetilde{\phi}, J_1u_1^r\right)\right|
\leq C \eps_0^2\left\|\pa_x\widetilde{\phi}\right\|^2.
$$
Finally, $J_9$ can be handled in the same way as in \eqref{J1.p2}.

Recalling \eqref{I2} and collecting all estimates above, we thereby complete the estimate on the term $I_2$ in the way of \eqref{I2sum}. The proof of Lemma \ref{lem3.2} is complete.
\end{proof}

Now we turn to estimate $I_3$, $I_4$ and $I_5$.
Noticing that $\mu(\theta)$ is a smooth function of $\ta$, we see by using Cauchy-Schwarz's inequality with $\eta$, Lemma \ref{cl.Re.Re2.}, \eqref{sob.ine.} and
\eqref{aps} that for $1<\sigma_1< 5/3$,
\begin{equation}\label{I3}
\begin{split}
|I_3|\leq& C\left\|\widetilde{\theta}\right\|_{L^\infty}\|\pa_x\widetilde{u}\|^2+C\left\|\widetilde{\theta}\right\|_{L^\infty}\|\pa_x u_1^r\|^2\\
\leq&C\left\|\widetilde{\theta}\right\|^{1/2}\left\|\pa_x\widetilde{\theta}\right\|^{1/2}\|\pa_x\widetilde{u}\|^2
+C\left\|\widetilde{\theta}\right\|^{1/2}\left\|\pa_x\widetilde{\theta}\right\|^{1/2}\|\pa_x u_1^r\|^2\\
\leq&C\left\|\widetilde{\theta}\right\|_{H^1}\|\pa_x\widetilde{u}\|^2+\eta\left\|\pa_x\widetilde{\theta}\right\|^{2}
+C_\eta\left\|\widetilde{\theta}\right\|^{2}\|\pa_x u_1^r\|^{\sigma_1+1}+C_\eta\|\pa_x u_1^r\|^{\frac{7-\sigma_1}{2}}\\
\leq&C\left\|\widetilde{\theta}\right\|_{H^1}\|\pa_x\widetilde{u}\|^2+\eta\left\|\pa_x\widetilde{\theta}\right\|^{2}
+C_\eta\left\|\widetilde{\theta}\right\|^{2}\|\pa_x u_1^r\|^{\sigma_1+1}+C_\eta\|\pa_x u_1^r\|_{L^\infty}^{\frac{7-\sigma_1}{4}}\|\pa_x u_1^r\|_{L^1}^{\frac{7-\sigma_1}{4}}\\
\leq&C\left\|\widetilde{\theta}\right\|_{H^1}\|\pa_x\widetilde{u}\|^2+\eta\left\|\pa_x\widetilde{\theta}\right\|^{2}
+C_\eta\left\|\widetilde{\theta}\right\|^{2}\|\pa_x u_1^r\|^{\sigma_1+1}
+C_\eta\|\pa_x u_1^r\|^{\frac{7-\sigma_1}{8}}\|\pa^2_x u_1^r\|^{\frac{7-\sigma_1}{8}}\|\pa_x u_1^r\|_{L^1}^{\frac{7-\sigma_1}{4}}\\
\leq&C\eps_0\|\pa_x\widetilde{u}\|^2+\eta\left\|\pa_x\widetilde{\theta}\right\|^{2}
+C_\eta(1+t)^{-\frac{\sigma_1+1}{2}}\left\|\widetilde{\theta}\right\|^{2}
+C_\eta\eps^{\frac{7-\sigma_1}{16}}(1+t)^{-\frac{21-3\sigma_1}{16}}.
\end{split}
\end{equation}
Recalling the definition \eqref{Ta.def}, by integration by parts and utilizing Lemma \ref{est.nonop},
Corollary \ref{inv.L.}, Cauchy-Schwarz's inequality and Sobolev's inequality, one has
\begin{equation*}
\begin{split}
|I_4|\leq& C_\eta\int_{\R\times\R^3}\frac{\Theta^2}{\FM}dxd\xi
+{\eta\sum\limits_{i=1}^3\int_{\R\times\R^3}|\xi|^4\FM|\pa_x\widetilde{u}_i|^2dxd\xi}
+\eta\int_{\R\times\R^3}|\xi|^6\FM\left|\pa_x\left(\frac{\widetilde{\theta}}{\theta}\right)\right|^2dxd\xi
\\&+\eta\sum\limits_{i=1}^3\int_{\R\times\R^3}|\xi|^4\FM\left|\pa_x\left(\frac{\widetilde{\theta}}{\theta}\widetilde{u}_i\right)\right|^2dxd\xi
+\eta\sum\limits_{i=1}^3\int_{\R\times\R^3}|\xi|^4\FM\left|\pa_x\left(\frac{\widetilde{\theta}}{\theta}u^r_i\right)\right|^2dxd\xi
\\
\leq&(\eta+\eps_0)\left\|\pa_x\left[\widetilde{u},\widetilde{\theta}\right]\right\|^2
+C(1+t)^{-2}\left\|\left[\widetilde{u},\widetilde{\theta}\right]\right\|^2
+C_\eta\sum\limits_{|\al|=1}\int_{\R\times\R^3}\frac{(1+|\xi|)|\pa^{\al}\FG|^2}{\FM}dxd\xi
\\&+C_\eta\int_{\R\times\R^3}\frac{|\pa_x\phi|^2|\pa_{\xi_1}(\widetilde{\FG}+\overline{\FG})|^2}{\FM}dxd\xi
+C_\eta\int_{\R}\left(\int_{\R^3}\frac{(1+|\xi|)|\FG|^2}{\FM_i}d\xi\right) \left(\int_{\R^3}\frac{|\FG|^2}{\FM_i}d\xi\right) dx\\
\leq&(\eta+\eps_0)\left\|\pa_x\left[\widetilde{u},\widetilde{\theta}\right]\right\|^2
+C(1+t)^{-2}\left\|\left[\widetilde{u},\widetilde{\theta}\right]\right\|^2
+C_\eta\sum\limits_{|\al|=1}\int_{\R\times\R^3}\frac{(1+|\xi|)|\pa^{\al}\FG|^2}{\FM}dxd\xi
\\&+C_\eta\int_{\R}\left(\int_{\R^3}\frac{(1+|\xi|)|\FG|^2}{\FM_i}d\xi\right) \left(\int_{\R^3}\frac{|\FG|^2}{\FM_i}d\xi\right) dx
+C_\eta\|\pa_x\phi\|_{L^\infty}^2
\int_{\R\times\R^3}\frac{|\pa_{\xi_1}\widetilde{\FG}|^2}{\FM}dxd\xi
\\&+C_\eta\int_{\R\times\R^3}|\pa_x\phi|^2|\pa_{x}[u^r,\ta^r]|^2dxd\xi
\\
\leq&(\eta+\eps_0)\left\|\pa_x\left[\widetilde{u},\widetilde{\theta}\right]\right\|^2+C_\eta\eps_0\left\|\pa_x\widetilde{\phi}\right\|^2_{H^1}
+C(1+t)^{-2}\left\|\left[\widetilde{u},\widetilde{\theta}\right]\right\|^2+C_\eta\eps(1+t)^{-2}
\\&+C_\eta\eps_0\int_{\R\times\R^3}\frac{|\pa_{\xi_1}\widetilde{\FG}|^2}{\FM}dxd\xi
+C_\eta\sum\limits_{|\al|=1}\int_{\R\times\R^3}\frac{(1+|\xi|)|\pa^{\al}\FG|^2}{\FM}dxd\xi
+C_\eta\eps_{0}\int_{{\R}\times{\R}^3}\frac{(1+|\xi|)\left|\widetilde{\FG}\right|^2}{\FM_{i}}dxd\xi.
\end{split}
\end{equation*}
Here the following crucial estimate has been used:
\begin{equation}\label{db.G}
\begin{split}
\int_{\R}\left(\int_{\R^3}\frac{(1+|\xi|)|\FG|^2}{\FM_i}d\xi\right) \left(\int_{\R^3}\frac{|\FG|^2}{\FM_i}d\xi\right) dx
\leq&
C\int_{{\R}}\left(\int_{{\R}^3}\frac{(1+|\xi|)\left|\widetilde{\FG}+\overline{\FG}\right|^2}{\FM_{i}}d\xi\right)
\left(\int_{{\R}^3}\frac{\left|\widetilde{\FG}+\overline{\FG}\right|^2}{\FM_{i}}d\xi\right) dx\\
\leq& \eps_{0}\int_{{\R}\times{\R}^3}\frac{(1+|\xi|)\left|\widetilde{\FG}\right|^2}{\FM_{i}}dxd\xi
+C\int_{{\R}}\left|\pa_x[u^r,\ta^r]\right|^4 dx\\
\leq& \eps_{0}\int_{{\R}\times{\R}^3}\frac{(1+|\xi|)\left|\widetilde{\FG}\right|^2}{\FM_{i}}dxd\xi
+C\eps(1+t)^{-2}.
\end{split}
\end{equation}
Moreover, we also have used the formula
\begin{multline}\label{d.iL}
\pa^{\al}\pa^\be\left\{L_{\FM}^{-1}h\right\}=L_{\FM}^{-1}(\pa^{\al} \pa^\be h)\\
-\sum\limits_{j=0}^{|\al|+|\be|-1}\sum\limits_{|\al'|+|\be'|=j}C^{\al,\be}_{\al',\be'}
L_{\FM}^{-1}\Big\{Q\left(\pa^{\al'}\pa^{\be'}\left(L_{\FM}^{-1}h\right),\pa^{\al-\al'}\pa^{\be-\be'}\FM\right)\\
+Q\left(\pa^{\al-\al'}\pa^{\be-\be'}\FM,\pa^{\al'}\pa^{\be'}\left(L_{\FM}^{-1}h\right)\right)\Big\},
\end{multline}
as well as Corollary \ref{inv.L.} and Lemma \ref{est.nonop}
to deduce that
\begin{equation}\label{d.bG}
\begin{split}
\int_{\R^3}\frac{|\pa_{\xi_1}\overline{\FG}|^2}{\FM_i}d\xi
\leq&
C\sum_{|\be|\leq 1}\int_{{\R}^3}\frac{(1+|\xi|)\left|L_{\FM}^{-1}\pa^\be \left\{{\bf P}_1^{\FM}\left[\xi_1\FM\left(\xi_1\pa_xu_1^r+\frac{|\xi-u|^2}{2\ta}\pa_x\ta^r\right)\right]\right\} \right|^2}{\FM_{i}}d\xi
\\
\leq & C\sum_{|\be|\leq 1}\int_{{\R}^3}\frac{(1+|\xi|)^{-1}\left|\pa^\be \left\{{\bf P}_1^{\FM}\left[\xi_1\FM\left(\xi_1\pa_xu_1^r+\frac{|\xi-u|^2}{2\ta}\pa_x\ta^r\right)\right]\right\} \right|^2}{\FM_{i}}d\xi
\\ \leq& C|\pa_x[u_1^r,\ta^r]|^2.
\end{split}
\end{equation}
It should be also noted that
$$
\dis{\int_{\R\times\R^3}}\frac{|\pa_x\phi|^2|\pa_{\xi_1}\FG|^2}{\FM}dxd\xi
$$
can not be directly controlled.
One has to use the splitting $\FG=\overline{\FG}+\widetilde{\FG}$
and estimate $\overline{\FG}$ and $\widetilde{\FG}$ respectively. This is different from the previous works \cite{YYZ, YZ1}, where
$$
\dis{\int_{\R\times\R^3}}\frac{|\pa_{\xi_1}\FG|^2}{\FM}dxd\xi
$$
is integrable with respect to time.

We now undertake to estimate $I_5$, by performing the similar calculations as those for obtaining  \eqref{I3}.
Since $\mu(\ta)$ and $\kappa(\ta)$ are smooth functions of $\ta$, it follows that
\begin{equation*}
\begin{split}
|I_5|\lesssim& \int_{\R}\left|\pa_x^2[u_1^r,\ta^r]\left[\widetilde{u}_1,\widetilde{\ta}\right]\right|dx
+\int_{\R}\left|\widetilde{\ta}\left(\pa_x\widetilde{\ta}\right)^2\right|dx
+\int_{\R}\left|\left[\widetilde{u}_1,\widetilde{\ta}\right]\pa_x\left[\widetilde{\ta},\ta^r\right]\pa_x[u_1^r,\ta^r]\right|dx.
\end{split}
\end{equation*}
Next, letting $0<\sigma_2<1$, by applying H\"older's inequality, Young's inequality, Lemma \ref{cl.Re.Re2.} and the Sobolev  inequality \eqref{sob.ine.},
we have
\begin{equation}\label{I51}
\begin{split}
\int_{\R}\left|\pa_x^2[u_1^r,\ta^r]\left[\widetilde{u}_1,\widetilde{\ta}\right]\right|dx
\lesssim&\|\pa_x^2[u^r_1,\ta^r]\|_{L^{1+\sigma_2}}\left\|\left[\widetilde{u}_1,\widetilde{\ta}\right]\right\|_{L^{\frac{1+\sigma_2}{\sigma_2}}}\\
\lesssim& \eps^{\frac{\sigma_2}{1+\sigma_2}}(1+t)^{-1}\left\|\pa_x\left[\widetilde{u}_1,\widetilde{\ta}\right]\right\|^{\frac{1-\sigma_2}{1+\sigma_2}}_{L^{\infty}}
\left\|\left[\widetilde{u}_1,\widetilde{\ta}\right]\right\|^{\frac{2\sigma_2}{1+\sigma_2}}\\
\lesssim&\eps^{\frac{\sigma_2}{1+\sigma_2}}(1+t)^{-1}\left\|\pa_x\left[\widetilde{u}_1,\widetilde{\ta}\right]\right\|^{\frac{1-\sigma_2}{2(1+\sigma_2)}}
\left\|\left[\widetilde{u}_1,\widetilde{\ta}\right]\right\|^{\frac{1+3\sigma_2}{2(1+\sigma_2)}}
\\
\lesssim&\eps^{\frac{\sigma_2}{1+\sigma_2}}\left\{\left\|\pa_x\left[\widetilde{u}_1,\widetilde{\ta}\right]\right\|^2+(1+t)^{-1-\frac{1-\sigma_2}{3+5\sigma_2}}
\left\|\left[\widetilde{u}_1,\widetilde{\ta}\right]\right\|^{\frac{2(1+3\sigma_2)}{3+5\sigma}}\right\}\\
\lesssim&\eps^{\frac{\sigma_2}{1+\sigma_2}}\left\{\left\|\pa_x\left[\widetilde{u}_1,\widetilde{\ta}\right]\right\|^2+(1+t)^{-\frac{5+3\sigma_2}{4+4\sigma_2}}
+(1+t)^{-\frac{3+5\sigma_2}{2(1+3\sigma_2)}}\left\|\left[\widetilde{u}_1,\widetilde{\ta}\right]\right\|^{2}\right\},
\end{split}
\end{equation}
\begin{equation}\label{I52}
\begin{split}
\int_{\R}\left|\left[\widetilde{u},\widetilde{\ta}\right]\pa_x\widetilde{\ta}\pa_x[v^r,\ta^r]\right|dx\leq
\eta\left\|\pa_x\widetilde{\ta}\right\|^{2}
+C_\eta(1+t)^{-2}\left\|\left[\widetilde{u},\widetilde{\ta}\right]\right\|^{2},
\end{split}
\end{equation}
\begin{equation}\label{I53}
\begin{split}
\int_{\R}\left|\widetilde{\ta}\left(\pa_x\widetilde{\ta}\right)^2\right|dx\lesssim
\eps_0\left\|\pa_x\widetilde{\ta}\right\|^{2},
\end{split}
\end{equation}
and from \eqref{I3}, it follows that
\begin{equation}\label{I54}
\begin{split}
\int_{\R}\left|\left[\widetilde{u}_1,\widetilde{\ta}\right]\pa_x\ta^r\pa_x\left[u_1^r,\ta^r\right]\right|dx
\leq& C\left\|\left[\widetilde{u}_1,\widetilde{\theta}\right]\right\|_{L^\infty}\|\pa_x[u_1^r,\ta^r]\|^2\\
\leq&\eta\left\|\pa_x\left[\widetilde{u}_1,\widetilde{\ta}\right]\right\|^{2}
+C_\eta(1+t)^{-\frac{\sigma_1+1}{2}}\left\|\left[\widetilde{u}_1,\widetilde{\ta}\right]\right\|^{2}
+C_\eta\eps^{\frac{7-\sigma_1}{16}}(1+t)^{-\frac{21-3\sigma_1}{16}}.
\end{split}
\end{equation}
Therefore \eqref{I51}, \eqref{I52}, \eqref{I53} and \eqref{I54} give
\begin{equation*}
\begin{split}
|I_5|\lesssim& \eps^{\frac{\sigma_2}{1+\sigma_2}}\left\{\left\|\pa_x\left[\widetilde{u},\widetilde{\ta}\right]\right\|^2+(1+t)^{-\frac{5+3\sigma_2}{4+4\sigma_2}}
+(1+t)^{-\frac{3+5\sigma_2}{2(1+3\sigma_2)}}\left\|\left[\widetilde{u},\widetilde{\ta}\right]\right\|^{2}\right\}
+\eta\left\|\pa_x\left[\widetilde{v},\widetilde{u},\widetilde{\ta}\right]\right\|^{2}
\\&+C_\eta(1+t)^{-2}\left\|\left[\widetilde{u},\widetilde{\ta}\right]\right\|^{2}
+C_\eta(1+t)^{-\frac{\sigma_1+1}{2}}\left\|\left[\widetilde{u},\widetilde{\ta}\right]\right\|^{2}
+C_\eta\eps^{\frac{7-\sigma_1}{16}}(1+t)^{-\frac{21-3\sigma_1}{16}}.
\end{split}
\end{equation*}

Let us now define
\begin{eqnarray}\label{dec.co}
\left\{\begin{array}{rll}
\begin{split}
\zeta_0=&\min\left\{2, \frac{21-3\sigma_1}{16}, \frac{5+3\sigma_2}{4+4\sigma_2}\right\},\\
\zeta_1=&\min\left\{\frac{4}{3}, \frac{\sigma_1+1}{2}, \frac{3+5\sigma_2}{2(1+3\sigma_2)}\right\},\\
\sigma_0=&\min\left\{1, \frac{7-\sigma_1}{16}, \frac{\sigma_2}{1+\sigma_2}\right\},
\end{split}
\end{array}
\right.
\end{eqnarray}
where $1<\sigma_1<5/3$ and $0<\sigma_2<1$. Note that $0<\sigma_0<1/3$, $\zeta_0>1$, and  $\zeta_1>1$.
Due to \eqref{aps.phy} as well as the assumption $(\CA)$, we also observe that there exists $C_2>0$ such that
\begin{equation}\label{eqv.eg0}
\begin{split}
\frac{1}{C_2}&\left\{
\frac{1}{2}\left(\pa_x\widetilde{\phi},\pa_x \widetilde{\phi}\right)+\frac{1}{2}\left(\widetilde{\phi}^2,\rho_{e}'(\phi^r)\right)+\frac{1}{3}\left(\widetilde{\phi}^3,\rho_{e}''(\phi^r)\right)
\right\}
\\ \leq&\left\|\widetilde{\phi}\right\|^2_{H^1}\leq C_2\left\{\frac{1}{2}\left(\pa_x\widetilde{\phi},\pa_x \widetilde{\phi}\right)+\frac{1}{2}\left(\widetilde{\phi}^2,\rho_{e}'(\phi^r)\right)+\frac{1}{3}\left(\widetilde{\phi}^3,\rho_{e}''(\phi^r)\right)\right\}.
\end{split}
\end{equation}
Defining
\begin{eqnarray*}
\CE_1(\widetilde{\rho},\widetilde{u},\widetilde{\theta},\widetilde{\phi})=\rho \widetilde{\eta}(\widetilde{\rho},\widetilde{u},\widetilde{\theta})
+\frac{1}{2}\left(\pa_x\widetilde{\phi},\pa_x \widetilde{\phi}\right)+\frac{1}{2}\left(\widetilde{\phi}^2,\rho_{e}'(\phi^r)\right)+\frac{1}{3}\left(\widetilde{\phi}^3,\rho_{e}''(\phi^r)\right),
\end{eqnarray*}
with \eqref{dec.co} and \eqref{eqv.eg0} in hand, we now can conclude from \eqref{en.iqv}, \eqref{b.entropy}, \eqref{I1sum}, \eqref{I2sum} and the above estimates
on $I_3$, $I_4$ and $I_5$ that
\begin{equation}\label{zeng.p3}
\begin{split}
\frac{d}{dt}&\CE_1(\widetilde{\rho},\widetilde{u},\widetilde{\theta},\widetilde{\phi})
+\la\left\{\int_{\R}\left|\left[\widetilde{v},\widetilde{u}_1,\widetilde{S}\right]\right|^2\pa_xu^r_{1}dx
+\left\|\pa_x\left[\widetilde{u},\widetilde{\theta}\right]\right\|^2\right\}\\
\lesssim&(\eps_0+\eta)\left\|\left[\pa_x\widetilde{v},
\pa_x\widetilde{\phi},\pa_x^2\widetilde{\phi}\right]\right\|^2
+C_\eta(1+t)^{-\zeta_1}\left\|\left[\widetilde{v},\widetilde{u},\widetilde{\ta},\widetilde{\phi}\right]\right\|^2+C_\eta\eps^{\sigma_0}(1+t)^{-\zeta_0}\\
&+\sum\limits_{|\al|=1}C_\eta\int_{\R\times\R^3}\frac{(1+|\xi|)|\pa^{\al}\FG|^2}{\FM}dxd\xi
+C_\eta\eps_0\int_{\R\times\R^3}\frac{|\pa_{\xi_1}\widetilde{\FG}|^2}{\FM}dxd\xi
+C_\eta\eps_0\int_{{\R}\times{\R}^3}\frac{(1+|\xi|)\left|\widetilde{\FG}\right|^2}{\FM_{i}}dxd\xi.
\end{split}
\end{equation}

\noindent{\bf Step 2.} {\it Dissipation of $\pa_x\left[\widetilde{\rho},\widetilde{\phi},\pa_x\widetilde{\phi}\right]$
and $\pa_t\left[\widetilde{\rho},\widetilde{u},\widetilde{\ta},\widetilde{\phi},\pa_x\widetilde{\phi}\right]$.}

We first differentiate \eqref{tphy} and \eqref{trho0} with respect to $x$, respectively, to obtain
\begin{equation}\label{d.phi}
-\pa^3_x\widetilde{\phi}=\pa_x\widetilde{\rho}+\pa_x(\rho_{e}(\phi^r)-\rho_{e}(\phi))
+\pa^3_x\phi^r,
\end{equation}
and
\begin{equation}\label{d.rho}
\pa_t\pa_x\widetilde{\rho}+\pa_x u_1\pa_x\widetilde{\rho}+u_1\pa^2_x\widetilde{\rho}+\pa_x\rho\pa_x\widetilde{u}_1
+\rho\pa^2_x\widetilde{u}_1+\widetilde{\rho}\pa^2_xu_1^r
+\pa_x\widetilde{\rho}\pa_xu_1^r+\widetilde{u}_1\pa^2_x\rho^r
+\pa_x\rho^r\pa_x\widetilde{u}_1=0.
\end{equation}
Then taking the inner products of \eqref{d.phi}, \eqref{d.rho} and \eqref{tu10} with $\pa_x\widetilde{\phi}$, $3\mu(\theta)\frac{\pa_x\widetilde{\rho}}{\rho^2}$
and $\pa_x\widetilde{\rho}$ with  respect to $x$ over $\R$, respectively, one has
\begin{equation}\label{d.phi.ip}
\begin{split}
&\left(\pa^2_x\widetilde{\phi},\pa^2_x\widetilde{\phi}\right)
-\left(\pa_x(\rho_{e}(\phi^r)-\rho_{e}(\phi)),\pa_x\widetilde{\phi}\right)
=
\left(\pa_x\widetilde{\rho},\pa_x\widetilde{\phi}\right)
+\left(\pa^2_x\phi^r,\pa_x\widetilde{\phi}\right),
\end{split}
\end{equation}
\begin{equation}\label{d.rho.ip}
\begin{split}
&\left(\pa_t\pa_x\widetilde{\rho},\frac{3\mu(\theta)}{\rho^2}\pa_x\widetilde{\rho}\right)
+\left(\frac{3\mu(\theta)}{\rho}\pa^2_x\widetilde{u}_1,\pa_x\widetilde{\rho}\right)
\\&\quad+\left(\pa_x u_1\pa_x\widetilde{\rho}+u_1\pa^2_x\widetilde{\rho}+\pa_x\rho\pa_x\widetilde{u}_1
+\widetilde{\rho}\pa^2_xu_1^r
+\pa_x\widetilde{\rho}\pa_xu_1^r+\widetilde{u}_1\pa^2_x\rho^r
+\pa_x\rho^r\pa_x\widetilde{u}_1,\pa_x\widetilde{\rho}\right)=0,
\end{split}
\end{equation}
and
\begin{equation}\label{tu1.ip}
\begin{split}
&\left(\pa_t \widetilde{u}_1,\pa_x\widetilde{\rho}\right)+(u_1\pa_x u_1-u_1^r\pa_x u_1^r, \pa_x\widetilde{\rho})+\left(\frac{\pa_x P-\pa_xP^r}{\rho},\pa_x\widetilde{\rho}\right)+\left(\pa_xP^r\left(\frac{1}{\rho}-\frac{1}{\rho^r}\right),\pa_x\widetilde{\rho}\right)
+\left(\pa_x\widetilde{\phi},\pa_x\widetilde{\rho}\right)
\\
&\quad=\left(\frac{3\mu(\theta)}{\rho}\pa^2_x \widetilde{u}_1,\pa_x\widetilde{\rho}\right)
+\left(3\mu(\theta)\pa^2_x u^r_1,\pa_x\widetilde{\rho}\right)
+\left(\frac{3}{\rho}\pa_x(\mu(\theta))\pa_x u_1,\pa_x\widetilde{\rho}\right)
-\left(\frac{1}{\rho}\int_{{\R}^3}\xi_1^2\pa_x\Theta d\xi,\pa_x\widetilde{\rho}\right).
\end{split}
\end{equation}
To obtain the dissipation of $\pa_x\widetilde{\phi}$, we now expand the second term on the left hand side of \eqref{d.phi}
as
\begin{equation}\label{taylor2}
\rho_{e}(\phi^r)-\rho_{e}(\phi)=-\rho_{e}'(\phi^r)\widetilde{\phi}
\underbrace{-\int_{\phi^r}^{\phi}(\varrho-\phi)\rho_{e}''(\varrho) d\varrho}_{J_{10}}.
\end{equation}
Similar to \eqref{tJ1}, one has
\begin{multline}\label{xJ10}
J_{10}\sim \widetilde{\phi}^2,\ \ \pa_x J_{10}=\pa_x\phi\int_{\phi^r}^{\phi}\rho_{e}''(\varrho) d\varrho
+\widetilde{\phi}\pa_x\phi^r\int_{\phi^r}^{\phi}\rho_{e}''(\varrho) d\varrho
\\ \sim \pa_x\phi \widetilde{\phi}+\pa_x\phi^r\widetilde{\phi}^2= \pa_x\widetilde{\phi} \widetilde{\phi}
+\pa_x\phi^r\widetilde{\phi}+\pa_x\phi^r\widetilde{\phi}^2.
\end{multline}
Applying \eqref{taylor2} and noticing the cancellations in \eqref{d.phi.ip}, \eqref{d.rho.ip} and \eqref{tu1.ip},
we further have
\begin{multline}\label{sum.d1}
\frac{d}{dt}\left(\widetilde{u}_1,\pa_x\widetilde{\rho}\right)
+\frac{3}{2}\frac{d}{dt}\left(\pa_x\widetilde{\rho},\frac{\mu(\theta)}{\rho^2}\pa_x\widetilde{\rho}\right)
+\frac{2}{3}\left(\frac{\ta^r}{\rho},(\pa_x\widetilde{\rho})^2\right)+\left(\pa^2_x\widetilde{\phi},\pa^2_x\widetilde{\phi}\right)
+\left(\rho_{e}'(\phi^r)\pa_x\widetilde{\phi},\pa_x\widetilde{\phi}\right)\\
=\underbrace{-\left(\pa_x(\rho_{e}'(\phi^r))\widetilde{\phi},\pa_x\widetilde{\phi}\right)+\left(\pa_x J_{10},\pa_x\widetilde{\phi}\right)
+\left(\pa^2_x\phi^r,\pa_x\widetilde{\phi}\right)}_{J_{11}}
\\
\underbrace{-\left(\pa_x u_1\pa_x\widetilde{\rho}+u_1\pa^2_x\widetilde{\rho}+\pa_x\rho\pa_x\widetilde{u}_1
+\widetilde{\rho}\pa^2_xu_1^r
+\pa_x\widetilde{\rho}\pa_xu_1^r+\widetilde{u}_1\pa^2_x\rho^r
+\pa_x\rho^r\pa_x\widetilde{u}_1,\pa_x\widetilde{\rho}\right)}_{J_{12}}
\\+\underbrace{\frac{3}{2}\left(\pa_t\left(\frac{\mu(\theta)}{\rho^2}\right)\pa_x\widetilde{\rho},\pa_x\widetilde{\rho}\right)}_{J_{13}}
+\underbrace{\left(\widetilde{u}_1,\pa_t \pa_x\widetilde{\rho}\right)-
(u_1\pa_x u_1-u_1^r\pa_x u_1^r, \pa_x\widetilde{\rho})}_{J_{14}}
\\
\underbrace{-\frac{2}{3}\left(\frac{\pa_x (\widetilde{\theta}\widetilde{\rho})}{\rho},\pa_x\widetilde{\rho}\right)
-\frac{2}{3}\left(\frac{\pa_x (\widetilde{\theta}\rho^r)}{\rho},\pa_x\widetilde{\rho}\right)
-\frac{2}{3}\left(\frac{\pa_x \theta^r\widetilde{\rho}}{\rho},\pa_x\widetilde{\rho}\right)-\left(\pa_xP^r\left(\frac{1}{\rho}-\frac{1}{\rho^r}\right),\pa_x\widetilde{\rho}\right)}_{J_{15}}
\\
\underbrace{+\left(3\mu(\theta)\pa^2_x u^r_1,\pa_x\widetilde{\rho}\right)
+\left(\frac{3}{\rho}\pa_x(\mu(\theta))\pa_x u_1,\pa_x\widetilde{\rho}\right)}_{J_{16}}
\underbrace{-\left(\frac{1}{\rho}\int_{{\R}^3}\xi_1^2\pa_x\Theta d\xi,\pa_x\widetilde{\rho}\right)}_{J_{17}}.
\end{multline}
We now turn to estimate
$J_l$ $(11\leq l\leq 17)$ term by term. We first present the calculations for $J_{11}$, $J_{12}$, $J_{14}$ and $J_{17}$, since the other terms are similar and easier.
For $J_{11}$, in light of Lemma \ref{cl.Re.Re2.}, \eqref{aps3}, \eqref{xJ10} and by Cauchy-Schwarz's inequality with $0<\eta<1$, one has
\begin{equation*}
|J_{11}|
\lesssim (\eta+\eps_0)\left\|\pa_x\widetilde{\phi}\right\|^2+C_\eta(1+t)^{-2}\left\|\widetilde{\phi}\right\|^2+C_\eta\eps(1+t)^{-2}.
\end{equation*}
The first two terms in $J_{12}$ is equal to
$$
-\frac{1}{2}(\pa_x u_1\pa_x\widetilde{\rho},\pa_x\widetilde{\rho})
$$
by integration by parts. We thus obtain
\begin{equation*}
|J_{12}|
\lesssim (\eta+\eps_0+\eps)\|\pa_x\widetilde{\rho}\|^2+(\eps_0+\eps)\|\pa_x\widetilde{\rho}\|^2
+C_\eta(1+t)^{-2}\|[\widetilde{\rho},\widetilde{u}_1]\|^2+C_\eta\eps(1+t)^{-2},
\end{equation*}
by further performing the similar calculations as $J_{11}$.

For $J_{14}$,  by integration by parts and applying \eqref{trho0}, one has
\begin{eqnarray*}
J_{14} &=&(\pa_x \widetilde{u}_1,-\pa_t \widetilde{\rho})-\big((\widetilde{u}+u^r)\pa_x (\widetilde{u}_1+u_1^r) -u^r\pa_x u^r,\pa_x \widetilde{\rho}\big)\\
&=&\big(\pa_x \widetilde{u}_1, \pa_x (\rho u)-\pa_x (\rho^r u_1^r)\big)- (\widetilde{u} \pa_x \widetilde{u}_1 +\widetilde{u}_1 \pa_x u_1^r
+u_1^r \pa_x\widetilde{u}_1,\pa_x \widetilde{\rho})\\
&=&((\pa_x \widetilde{u}_1)^2, \widetilde{\rho})+(\pa_x \widetilde{u}_1,\widetilde{\rho}\pa_x u^r) +(\pa_x \widetilde{u}_1,\pa_x \rho^r \widetilde{u}_1) +(\pa_x \widetilde{u}_1, \rho^r \pa_x \widetilde{u}_1)-(\widetilde{u}_1\pa_x u_1^r,\pa_x \widetilde{\rho}),
\end{eqnarray*}
and therefore
\begin{equation*}
|J_{14}|
\leq (C \eps_0+\eta) \|\pa_x \widetilde{u}_1\|^2+\rho_+\|\pa_x \widetilde{u}_1\|^2 +\eta\|\pa_x\widetilde{\rho}\|^2+C_\eta(1+t)^{-2}\|[\widetilde{\rho},\widetilde{u}_1]\|^2.
\end{equation*}
As to $J_{17}$,
from Cauchy-Schwarz's inequality with $\eta$, it follows that
\begin{equation*}
\begin{split}
|J_{17}|\leq&\eta\|\pa_x\widetilde{\rho}\|^2+C_{\eta}\int_{\R\times{\R}^3}\frac{1+|\xi|}{\FM}|\pa_x\Theta|^2 dxd\xi.
\end{split}
\end{equation*}
To compute the above integral, by applying \eqref{d.iL}, Corollary \ref{inv.L.}, Lemma \ref{est.nonop} and \eqref{db.G}, one further obtains
\begin{equation}\label{TTa.es.}
\begin{split}
&\int_{\R\times{\R}^3}\frac{1+|\xi|}{\FM}|\pa_x\Theta|^2 dxd\xi \\
\lesssim& \sum\limits_{|\al|=2}\int_{\R\times{\R}^3}\frac{1+|\xi|}{\FM}|\pa^{\al}\FG|^2 dxd\xi
+\sum\limits_{|\al|=1}\int_{\R\times{\R}^3}\frac{1+|\xi|}{\FM}|\pa^{\al}\FG|^2 |\pa_x[\rho,u,\ta]|^2 dxd\xi \\
&+\eps_{0}\sum\limits_{|\al|\leq1}\int_{\R\times\R^3}\frac{|\pa_{\xi_1}\pa^{\al}\widetilde{\FG}|^2}{\FM}dxd\xi
+\sum\limits_{|\al|\leq1}\int_{\R\times\R^3}\left|\pa^{\al}\left(\pa_x\phi\pa_{\xi_1}\overline{\FG}\right)\right|^2dxd\xi
\\&+\int_{\R}\left(\int_{\R^3}\frac{(1+|\xi|)|\pa_x\FG|^2}{\FM_i}d\xi\right) \left(\int_{\R^3}\frac{|\FG|^2}{\FM_i}d\xi\right)dx
+\int_{\R}\left(\int_{\R^3}\frac{(1+|\xi|)|\FG|^2}{\FM_i}d\xi\right) \left(\int_{\R^3}\frac{|\pa_x\FG|^2}{\FM_i}d\xi\right)dx\\
&+\int_{\R}\left(\int_{\R^3}\frac{(1+|\xi|)|\FG|^2}{\FM_i}d\xi\right) \left(\int_{\R^3}\frac{|\FG|^2}{\FM_i}d\xi\right)|\pa_x[\rho,u,\ta]|^2dx
\\
\lesssim& \sum\limits_{|\al|=2}\int_{\R\times{\R}^3}\frac{1+|\xi|}{\FM}|\pa^{\al}\FG|^2 dxd\xi
+\eps_{0}\sum\limits_{|\al|=1}\int_{\R\times{\R}^3}\frac{1+|\xi|}{\FM}|\pa^{\al}\FG|^2 dxd\xi
\\&+\eps_{0}\sum\limits_{|\al|\leq1}\int_{\R\times\R^3}\frac{|\pa_{\xi_1}\pa^{\al}\widetilde{\FG}|^2}{\FM}dxd\xi
+\eps_0\sum\limits_{|\al|\leq1}\left\|\pa_x\pa^\al\widetilde{\phi}\right\|^2
+\eps_{0}\int_{{\R}\times{\R}^3}\frac{(1+|\xi|)\left|\widetilde{\FG}\right|^2}{\FM_{i}}dxd\xi
+C\eps(1+t)^{-2}.
\end{split}
\end{equation}
Thus,
\begin{equation*}
\begin{split}
|J_{17}|\leq&\eta\|\pa_x\widetilde{\rho}\|^2+C_\eta\sum\limits_{|\al|=2}\int_{\R\times{\R}^3}\frac{1+|\xi|}{\FM}|\pa^{\al}\FG|^2 dxd\xi
+C_\eta\eps_{0}\sum\limits_{|\al|=1}\int_{\R\times{\R}^3}\frac{1+|\xi|}{\FM}|\pa^{\al}\FG|^2 dxd\xi
\\&+C_\eta\eps_{0}\sum\limits_{|\al|\leq1}\int_{\R\times\R^3}\frac{|\pa_{\xi_1}\pa^{\al}\widetilde{\FG}|^2}{\FM}dxd\xi
+C_\eta\eps_{0}\int_{{\R}\times{\R}^3}\frac{(1+|\xi|)\left|\widetilde{\FG}\right|^2}{\FM_{i}}dxd\xi
\\&+C_\eta\eps_0\sum\limits_{|\al|\leq1}\left\|\pa_x\pa^\al\widetilde{\phi}\right\|^2+C_\eta\eps(1+t)^{-2}.
\end{split}
\end{equation*}
The estimations for the remaining terms will be much easier. For brevity, we directly give the following
computations:
\begin{equation*}
|J_{13}|\leq C\|\pa_t[\rho,\ta]\|_{L^\infty}\|\pa_x\widetilde{\rho}\|^2
\leq C\eps_0\|\pa_x\widetilde{\rho}\|^2,
\end{equation*}
\begin{equation*}
|J_{15}|
\leq (C \eps_0+\eta) \|\pa_x \widetilde{\rho}\|^2+C_\eta\left\|\pa_x \widetilde{\ta}\right\|^2 +\eta\left\|\pa_x\widetilde{\rho}\right\|^2+C(1+t)^{-2}\left\|\left[\widetilde{\rho},\widetilde{\ta}\right]\right\|^2,
\end{equation*}
\begin{equation*}
|J_{16}|
\leq \eta\|\pa_x \widetilde{\rho}\|^2+\eps_0\left\|\pa_x\left[\widetilde{u}_1,\widetilde{\ta}\right]\right\|^2+\eps C_\eta(1+t)^{-2}.
\end{equation*}
We insert the above estimations for $J_l$ $(11\leq l\leq 17)$ into \eqref{sum.d1} and then choose $\eps$, $\eps_0$ and $\eta$ suitably small such
that
\begin{equation}\label{sum.d12}
\begin{split}
\frac{d}{dt}&\left(\widetilde{u}_1,\pa_x\widetilde{\rho}\right)
+\frac{3}{2}\frac{d}{dt}\left(\pa_x\widetilde{\rho},\mu(\theta)\pa_x\widetilde{\rho}\right)
+\la\left(\pa_x\widetilde{\rho},\pa_x\widetilde{\rho}\right)
+\la\left(\pa^2_x\widetilde{\phi},\pa^2_x\widetilde{\phi}\right)
+\la\left(\pa_x\widetilde{\phi},\pa_x\widetilde{\phi}\right)\\
\lesssim&\left\|\pa_x\widetilde{\ta}\right\|^2
+\max\{1,\rho_+\}\left\|\pa_x\widetilde{u}_1\right\|^2
+(1+t)^{-2}\left\|\left[\widetilde{\rho},\widetilde{u}_1,\widetilde{\ta},\widetilde{\phi}\right]\right\|^2+\eps(1+t)^{-2}\\
&+\sum\limits_{|\al|=2}\int_{\R\times{\R}^3}\frac{1+|\xi|}{\FM}|\pa^{\al}\FG|^2 dxd\xi
+\eps_{0}\sum\limits_{|\al|=1}\int_{\R\times{\R}^3}\frac{1+|\xi|}{\FM}|\pa^{\al}\FG|^2 dxd\xi
\\&+\eps_{0}\sum\limits_{|\al|\leq1}\int_{\R\times\R^3}\frac{|\pa_{\xi_1}\pa^{\al}\widetilde{\FG}|^2}{\FM}dxd\xi
+\eps_{0}\int_{{\R}\times{\R}^3}\frac{(1+|\xi|)\left|\widetilde{\FG}\right|^2}{\FM_{i}}dxd\xi .
\end{split}
\end{equation}
Having obtained \eqref{sum.d12}, one can see that $\pa_t\left[\widetilde{\rho},\widetilde{u},\widetilde{\ta}\right]$
also enjoys the dissipation property. To see this, we get from {\eqref{cons.law.} and \eqref{MEt}} that
\begin{eqnarray}\label{pb.con.}
\left\{\begin{array}{rlll}
\begin{split}
&\pa_t\widetilde{\rho}+\pa_x(\rho u_1-\rho^ru_1^r)=0,\\
&\pa_t \widetilde{u}_1+u_1\pa_x u_1-u_1^r\pa_x u_1^r+\frac{\pa_x P}{\rho}-\frac{\pa_xP^r}{\rho^r}+\pa_x\widetilde{\phi}=
-\frac{1}{\rho}\int_{{\R}^3}\xi_1^2\pa_x\FG d\xi,\\
&\pa_t \widetilde{u}_i+{u_1\pa_x\widetilde{u}_i}=-\frac{1}{\rho}\int_{{\R}^3}\xi_1\xi_i\pa_x\FG d\xi,\
\ i=2,\ 3,\\
&\pa_t \widetilde{\theta}+u_1\pa_x\ta-u^r_1\pa_x\ta^r+\frac{P\pa_x u_1}{\rho}-\frac{P^r\pa_x u^r_1}{\rho^r}=-\frac{1}{\rho}\int_{{\R}^3}\left(\frac{|\xi|^2}{2}-u\cdot\xi\right)\xi_1\pa_x\FG d\xi,
\end{split}
\end{array}\right.
\end{eqnarray}
which yields
\begin{equation}\label{sum.d13}
\begin{split}
\left\|\pa_t\left[\widetilde{\rho},\widetilde{u},\widetilde{\ta}\right]\right\|^2
\lesssim&\left\|\pa_x\left[\widetilde{\rho},\widetilde{u},\widetilde{\ta}, \widetilde{\phi}\right]\right\|^2
+(1+t)^{-2}\left\|\left[\widetilde{\rho},\widetilde{u}_1,\widetilde{\ta}\right]\right\|^2+\int_{\R\times{\R}^3}\frac{1+|\xi|}{\FM}|\pa_x\FG|^2 dxd\xi .
\end{split}
\end{equation}
Letting $1\gg\ka_1\gg\ka_2>0$, taking the summation
of \eqref{zeng.p3}, $\eqref{sum.d12}\times\ka_1$, $\eqref{sum.d13}\times\ka_2$ and $\eqref{ptphyL2}\times \ka_2$, we have for sufficiently small $\eps_0>0$, $\eps>0$ and $\eta>0$ that
\begin{equation}\label{ma.dis1}
\begin{split}
\frac{d}{dt}&\CE_1(\widetilde{\rho},\widetilde{u},\widetilde{\theta},\widetilde{\phi})
+\ka_1\frac{d}{dt}\left(\widetilde{u}_1,\pa_x\widetilde{\rho}\right)
+\frac{3\ka_1}{2}\frac{d}{dt}\left(\pa_x\widetilde{\rho},\pa_x\widetilde{\rho}\right)
\\[2mm]
&+\la\int_{\R}\left|\left[\widetilde{\rho},\widetilde{u}_1,\widetilde{S}\right]\right|^2\pa_xu^r_{1}dx
+\la\left\|\pa_x\left[\widetilde{u},\widetilde{\theta}\right]\right\|^2
+\la\left(\left\|\pa_t\widetilde{\phi}\right\|^2+\left\|\pa_t\pa_x\widetilde{\phi}\right\|^2\right)
\\
&+\la\left\|\pa_t\left[\widetilde{\rho},\widetilde{u},\widetilde{\ta}\right]\right\|^2+\la\left(\pa_x\widetilde{\rho},\pa_x\widetilde{\rho}\right)
+\la\left(\pa^2_x\widetilde{\phi},\pa^2_x\widetilde{\phi}\right)
+\la\left(\pa_x\widetilde{\phi},\pa_x\widetilde{\phi}\right)
\\
\lesssim&(1+t)^{-\zeta_1}\left\|\left[\widetilde{\rho},\widetilde{u},\widetilde{\ta},\widetilde{\phi}\right]\right\|^2
+\eps^{\sigma_0}(1+t)^{-\zeta_0}+\eps_{0}\int_{{\R}\times{\R}^3}\frac{(1+|\xi|)\left|\widetilde{\FG}\right|^2}{\FM_{i}}dxd\xi
\\
&+\sum\limits_{1\leq|\al|\leq2}\int_{\R\times\R^3}\frac{(1+|\xi|)|\pa^{\al}\FG|^2}{\FM}dxd\xi
+\eps_{0}\sum\limits_{|\al|\leq1}\int_{\R\times\R^3}\frac{|\pa_{\xi_1}\pa^{\al}\widetilde{\FG}|^2}{\FM}dxd\xi,
\end{split}
\end{equation}
where we also used the fact that $\widetilde{v}\sim\widetilde{\rho}$.

\noindent{\bf Step 3.} {\it The first order energy estimates.}

For $|\al|=1$, taking the inner product of $\pa^{\al}\eqref{trho0}$, $\pa^{\al}\eqref{tu10}$, $\pa^{\al}\eqref{tui0}$ and $\pa^{\al}\eqref{tta0}$ with $\pa^{\al}\widetilde{\rho}$, $\pa^{\al}\widetilde{u}_1$,
$\pa^{\al}\widetilde{u}_i$ and $\pa^{\al}\widetilde{\ta}$, respectively, and then taking the summation of the resulting equations,
one has
\begin{equation}\label{p2u1}
\begin{split}
\frac{1}{2}&\frac{d}{dt}\sum\limits_{|\al|=1}\left\|\pa^{\al}\left[\widetilde{\rho},\widetilde{u},\widetilde{\ta}\right]\right\|^2
+\sum\limits_{|\al|=1}\left(\frac{3\mu(\theta)}{\rho}\pa_x \pa^{\al}\widetilde{u}_1,\pa_x\pa^{\al}\widetilde{u}_1\right)
+\sum\limits_{i=2}^3\sum\limits_{|\al|=1}\left(\frac{\mu(\theta)}{\rho}\pa_x \pa^{\al}\widetilde{u}_i,\pa_x\pa^{\al}\widetilde{u}_i\right)
\\
&+\sum\limits_{|\al|=1}\left(\frac{\kappa(\theta)}{\rho}\pa_x \pa^{\al}\widetilde{\theta},\pa_x\pa^{\al}\widetilde{\ta}\right)
=\sum\limits_{l=1}^{9}\CI_l,
\end{split}
\end{equation}
where
\begin{eqnarray*}
\left\{\begin{array}{rll}
\begin{split}
&\CI_1=-\left(\pa^{\al}\pa_x (\rho u_1-\rho^r u^r_1),\pa^{\al}\widetilde{\rho}\right),\
\CI_2=-\left(\pa^{\al}(u_1\pa_x u_1-u_1^r\pa_x u_1^r), \pa^{\al}\widetilde{u}_1\right)-{\sum\limits_{i=2}^3\left(\pa^{\al}(u_1\pa_x \widetilde{u}_i), \pa^{\al}\widetilde{u}_i\right)},\\
&\CI_3=-\left(\pa^{\al}\pa_x\widetilde{\phi}, \pa^{\al}\widetilde{u}_1\right),\
\CI_4=-\left(\pa^{\al}\left(\frac{\pa_x P-\pa_xP^r}{\rho}\right),\pa^\al\widetilde{u}_1\right)
-\left(\pa^\al\left(\pa_xP^r\left(\frac{1}{\rho}-\frac{1}{\rho^r}\right)\right),\pa^{\al}\widetilde{u}_1\right),\\
&\CI_{5}=-\left(\frac{3\mu(\theta)}{\rho}\pa^{\al}\pa_x u^r_1,\pa^{\al}\pa_x\widetilde{u}_1\right)
-\left(\frac{\ka(\theta)}{\rho}\pa^{\al}\pa_x \theta^r,\pa^{\al}\pa_x\widetilde{\theta}\right),\\
&\CI_{6}=-\left(\frac{3\pa^{\al}\mu(\theta)}{\rho}\pa_x u_1,\pa^{\al}\pa_x\widetilde{u}_1\right)
-\sum\limits_{i=2}^3\left(\frac{\pa^{\al}\mu(\theta)}{\rho}\pa_x \widetilde{u}_i,\pa^{\al}\pa_x\widetilde{u}_i\right)
-\left(\frac{\pa^{\al}\ka(\theta)}{\rho}\pa_x \theta,\pa^{\al}\pa_x\widetilde{\theta}\right),\\
&\CI_{7}=-3\left(\pa_x\left(\frac{1}{\rho}\right)\pa^{\al}(\mu(\theta)\pa_x  u_1),\pa^{\al}\widetilde{u}_1\right)-\sum\limits_{i=2}^3\left(\pa_x\left(\frac{1}{\rho}\right)\pa^{\al}(\mu(\theta)\pa_x \widetilde{u}_i),\pa^{\al}\widetilde{u}_i\right)
\\&\qquad-\left(\pa_x\left(\frac{1}{\rho}\right)\pa^{\al}(\ka(\theta)\pa_x\theta),\pa^{\al}\widetilde{\theta}\right)
+3\left(\pa^{\al}\left(\frac{1}{\rho}\right)\pa_x(\mu(\theta)\pa_x  u_1),\pa^{\al}\widetilde{u}_1\right)\\&\qquad+\sum\limits_{i=2}^3\left(\pa^{\al}\left(\frac{1}{\rho}\right)\pa_x(\mu(\theta)\pa_x \widetilde{u}_i),\pa^{\al}\widetilde{u}_i\right)
+\left(\pa^{\al}\left(\frac{1}{\rho}\right)\pa_{x}(\ka(\theta)\pa_x\theta),\pa^{\al}\widetilde{\theta}\right),
\end{split}
\end{array}\right.
\end{eqnarray*}
and
\begin{eqnarray*}
\left\{\begin{array}{rll}
\begin{split}
&\CI_{8}=3\left(\pa^{\al}\left(\frac{\mu(\theta)}{\rho}(\pa_x u_1)^2\right),\pa^{\al}\widetilde{\ta}\right)+
\sum\limits_{i=2}^3\left(\pa^{\al}\left(\frac{\mu(\theta)}{\rho}(\pa_x \widetilde{u}_i)^2\right),\pa^{\al}\widetilde{\ta}\right),\\
&\CI_{9}=\sum\limits_{i=1}^3\left(\int_{{\R}^3}\xi_1\xi_i\pa^{\al}\Theta d\xi,\pa_x\left(\frac{\pa^{\al}\widetilde{u}_i}{\rho}\right)\right)
+\left(\int_{{\R}^3}\left(\frac{|\xi|^2}{2}-u\cdot\xi\right)\xi_1\pa^{\al}\Theta d\xi,\pa_x\left(\frac{\pa^{\al}\widetilde{\ta}}{\rho}\right)\right)
\\&\qquad-\sum\limits_{i=1}^3\left(\int_{{\R}^3}\pa_x u_i\xi_i\xi_1\pa^{\al}\Theta d\xi,\frac{\pa^{\al}\widetilde{\ta}}{\rho}\right)
+\sum\limits_{i=1}^3\left(\int_{{\R}^3}\pa^{\al}\left(\frac{u_i}{\rho}\right)\xi_i\xi_1\pa_x\Theta d\xi,\pa^{\al}\widetilde{\ta}\right)
\\&\qquad-\sum\limits_{i=1}^3\left(\pa^{\al}\left(\frac{1}{\rho}\right)\int_{{\R}^3}\xi_1\xi_i\pa_x\Theta d\xi,\pa^{\al}\widetilde{u}_i\right).
\end{split}
\end{array}\right.
\end{eqnarray*}
We now turn to estimate $\CI_l$ $(1\leq l\leq 9)$ term by term.
By integration by parts and applying Cauchy-Schwarz's inequality with $0<\eta<1$, Sobolev's inequality \eqref{sob.ine.}, a priori assumption \eqref{aps}, the estimates \eqref{TTa.es.}, as well as Lemma \ref{cl.Re.Re2.}, one can see that
\begin{equation*}
\begin{split}
|\CI_1|\leq &C_\eta\sum\limits_{|\al|=1}\left\|\pa^\al[\widetilde{\rho},\widetilde{u}_1]\right\|^2+(\eta+\eps_0)\sum\limits_{|\al|=1}\left\|\pa_x\pa^\al\widetilde{u}_1\right\|^2
+C_\eta(1+t)^{-2}\|[\widetilde{\rho},\widetilde{u}_1]\|^2,
\end{split}
\end{equation*}
\begin{equation*}
\begin{split}
|\CI_2|\leq \eps_0\sum\limits_{|\al|=1}\|\pa^\al\widetilde{u}\|^2
+C(1+t)^{-2}\|\widetilde{u}_1\|^2,\ |\CI_3|\leq \eta\sum\limits_{|\al|=1}\|\pa_x\pa^\al\widetilde{u}_1\|^2
+C_\eta\sum\limits_{|\al|=1}\left\|\pa^\al\widetilde{\phi}\right\|^2,
\end{split}
\end{equation*}
\begin{equation*}
\begin{split}
|\CI_4|\leq &\eta\sum\limits_{|\al|=1}\|\pa_x\pa^\al\widetilde{u}_1\|^2+(\eps_0+\eta)\sum\limits_{|\al|=1}\|\pa^\al\widetilde{u}_1\|^2
+C_\eta\sum\limits_{|\al|=1}\left\|\pa^\al\left[\widetilde{\rho},\widetilde{\ta}\right]\right\|^2
+C_\eta(1+t)^{-2}\left\|\left[\widetilde{\rho},\widetilde{\ta}\right]\right\|^2,
\end{split}
\end{equation*}
\begin{equation*}
\begin{split}
|\CI_5|\leq \eta\sum\limits_{|\al|=1}\left\|\pa^\al\pa_x\left[\widetilde{u}_1,\widetilde{\ta}\right]\right\|^2
+C_\eta\eps(1+t)^{-2},
\end{split}
\end{equation*}
\begin{equation*}
\begin{split}
|\CI_6|\leq \eta\sum\limits_{|\al|=1}\left\|\pa_x\pa^\al\left[\widetilde{u},\widetilde{\ta}\right]\right\|^2
+\eps_0\sum\limits_{|\al|=1}\left\|\pa^\al\left[\widetilde{u},\widetilde{\ta}\right]\right\|^2
+C_\eta\eps(1+t)^{-2},
\end{split}
\end{equation*}
\begin{equation*}
\begin{split}
|\CI_7|+|\CI_8|\leq \eps_0\sum\limits_{|\al|=1}\left\|\pa_x\pa^\al\left[\widetilde{u},\widetilde{\ta}\right]\right\|^2
+\eps_0\sum\limits_{|\al|=1}\left\|\pa^\al\left[\widetilde{u},\widetilde{\ta}\right]\right\|^2
+C\eps(1+t)^{-2},
\end{split}
\end{equation*}
\begin{equation*}
\begin{split}
|\CI_{9}|
\lesssim&\eta\sum\limits_{|\al|=1}\left\|\pa_x\pa^{\al}\left[\widetilde{u},\widetilde{\ta}\right]\right\|^2
+\eps_0\sum\limits_{|\al|=1}\left\|\pa^\al\left[\widetilde{u},\widetilde{\ta}\right]\right\|^2
+C_\eta\eps_0\sum\limits_{|\al|\leq1}\left\|\pa_x\pa^\al\widetilde{\phi}\right\|^2+C_\eta\eps(1+t)^{-2}
\\&+ C_\eta\sum\limits_{|\al|=2}\int_{\R\times{\R}^3}\frac{1+|\xi|}{\FM}|\pa^{\al}\FG|^2 dxd\xi
+C_\eta\eps_{0}\sum\limits_{|\al|=1}\int_{\R\times{\R}^3}\frac{1+|\xi|}{\FM}|\pa^{\al}\FG|^2 dxd\xi
\\&+C_\eta\eps_{0}\sum\limits_{|\al|\leq1}\int_{\R\times\R^3}\frac{|\pa_{\xi_1}\pa^{\al}\widetilde{\FG}|^2}{\FM}dxd\xi
+C_\eta\eps_{0}\int_{{\R}\times{\R}^3}\frac{(1+|\xi|)\left|\widetilde{\FG}\right|^2}{\FM_{i}}dxd\xi .
\end{split}
\end{equation*}
Plugging the previous computations for $\CI_l$ $(1\leq l\leq9)$ into \eqref{p2u1}, we thus arrive at
\begin{equation}\label{p2u2}
\begin{split}
\frac{d}{dt}&\sum\limits_{|\al|=1}\left\|\pa^{\al}\left[\widetilde{\rho},\widetilde{u},\widetilde{\ta}\right]\right\|^2
+\la\sum\limits_{|\al|=1}\left\|\pa_x \pa^{\al}\left[\widetilde{u},\widetilde{\theta}\right]\right\|^2
\\ \lesssim&\sum\limits_{|\al|=1}\left\|\pa^{\al}\left[\widetilde{\rho},\widetilde{u},\widetilde{\ta},\widetilde{\phi}\right]\right\|^2
+\eps_0\sum\limits_{|\al|=1}\left\|\pa_x\pa^\al\widetilde{\phi}\right\|^2+(1+t)^{-2}\left\|\left[\widetilde{\rho},\widetilde{u},\widetilde{\ta}\right]\right\|^2+\eps(1+t)^{-2}
\\&+ \sum\limits_{|\al|=2}\int_{\R\times{\R}^3}\frac{1+|\xi|}{\FM}|\pa^{\al}\FG|^2 dxd\xi
+\eps_{0}\sum\limits_{|\al|=1}\int_{\R\times{\R}^3}\frac{1+|\xi|}{\FM}|\pa^{\al}\FG|^2 dxd\xi
\\&+\eps_{0}\sum\limits_{|\al|\leq1}\int_{\R\times\R^3}\frac{|\pa_{\xi_1}\pa^{\al}\widetilde{\FG}|^2}{\FM}dxd\xi
+\eps_{0}\int_{{\R}\times{\R}^3}\frac{(1+|\xi|)\left|\widetilde{\FG}\right|^2}{\FM_{i}}dxd\xi .
\end{split}
\end{equation}
Let us now deduce the second order dissipation of $\widetilde{\rho}$. For this, letting $|\al|=1$, taking the inner product of $\pa^{\al}\eqref{pb.con.}_2$ with $\pa^{\al}\pa_x\widetilde{\rho}$, we obtain
\begin{equation}\label{hveg}
\begin{split}
\frac{d}{dt}&\left(\pa^{\al}\widetilde{u}_1,\pa^{\al}\pa_x\widetilde{\rho}\right)
+\frac{2}{3}\left(\frac{\ta^r}{\rho}\pa^{\al} \pa_x\widetilde{\rho},\pa^{\al}\pa_x\widetilde{\rho}\right)
\\
=&\left(\pa^{\al}\widetilde{u}_1,\pa_t\pa^{\al}\pa_x\widetilde{\rho}\right)-\left(\pa^{\al}(u_1\pa_x u_1-u_1^r\pa_x u_1^r), \pa^{\al}\widetilde{\rho}\right)
-\frac{2}{3}\left(\pa^\al\left(\frac{\pa_x (\widetilde{\theta}\widetilde{\rho})}{\rho}\right),\pa^{\al}\pa_x\widetilde{\rho}\right)
\\&-\frac{2}{3}\left(\pa^\al\left(\frac{\pa_x (\widetilde{\theta}\rho^r)}{\rho}\right),\pa^{\al}\pa_x\widetilde{\rho}\right)
-\frac{2}{3}\left(\pa^\al\left(\frac{\pa_x \theta^r\widetilde{\rho}}{\rho}\right),\pa^{\al}\pa_x\widetilde{\rho}\right)
-\left(\pa^\al\left(\pa_xP^r\left(\frac{1}{\rho}-\frac{1}{\rho^r}\right)\right),\pa^{\al}\pa_x\widetilde{\rho}\right)
\\&-\left(\pa^{\al}\pa_x\widetilde{\phi}, \pa^{\al}\pa_x\widetilde{\rho}\right)
-\left(\int_{{\R}^3}\xi_1^2\pa^{\al}\pa_x\FG d\xi,\pa^{\al}\pa_x\widetilde{\rho}\right).
\end{split}
\end{equation}
By integration by parts and in view of the first equation of \eqref{pb.con.}, the first term on the right hand side of \eqref{hveg}
can be rewritten as
$$
-\left(\pa^{\al}\pa_x\widetilde{u}_1,\pa_t\pa^{\al}\widetilde{\rho}\right)=\left(\pa^{\al}\pa_x\widetilde{u}_1,\pa^{\al}(\pa_x(\rho u_1-\rho^r u^r_1))\right),
$$
which is further bounded by
$$
C_\eta\sum\limits_{|\al|=1}\|\pa^{\al}\pa_x\widetilde{u}_1\|^2+(\eta+\eps_0)\|\pa^{\al}\pa_x\widetilde{\rho}\|^2
+\eps_0\sum\limits_{|\al|=1}\|\pa^{\al}[\widetilde{\rho},\widetilde{u}_1]\|^2
+(1+t)^{-2}\|[\widetilde{\rho},\widetilde{u}_1]\|^2.
$$
The remaining terms on the right hand side of \eqref{hveg} are dominated by
\begin{equation*}
\begin{split}
(\eps_0+\eta)\sum\limits_{|\al|=1}\left\|\pa_x\pa^{\al}\widetilde{\rho}\right\|^2
&+C_\eta\sum\limits_{|\al|=1}\left\|\pa^{\al}\pa_x\left[\widetilde{u}_1,\widetilde{\ta},\widetilde{\phi}\right]\right\|^2
+\eps_{0}\sum\limits_{|\al|=1}\left\|\pa^{\al}\left[\widetilde{\rho},\widetilde{u}_1,\widetilde{\ta}\right]\right\|^2
\\&+C_\eta(1+t)^{-2}\left\|\left[\widetilde{\rho},\widetilde{\ta}\right]\right\|^2+\eps(1+t)^{-2}
+C_\eta\sum\limits_{|\al|=2}\int_{\R\times{\R}^3}\frac{1+|\xi|}{\FM}|\pa^{\al}\FG|^2 dxd\xi .
\end{split}
\end{equation*}
We next get from substituting the above estimates into \eqref{hveg} that
\begin{equation}\label{hveg2}
\begin{split}
\frac{d}{dt}\sum\limits_{|\al|=1}&\left(\pa^{\al}\widetilde{u}_1,\pa^{\al}\pa_x\widetilde{\rho}\right)
+\la\sum\limits_{|\al|=1}\left(\pa^{\al} \pa_x\widetilde{\rho},\pa^{\al}\pa_x\widetilde{\rho}\right)
\\
\lesssim&\sum\limits_{|\al|=1}\left\|\pa^{\al}\pa_x\left[\widetilde{u}_1,\widetilde{\ta},\widetilde{\phi}\right]\right\|^2
+\eps_{0}\sum\limits_{|\al|=1}\left\|\pa^{\al}\left[\widetilde{\rho},\widetilde{u}_1,\widetilde{\ta}\right]\right\|^2
\\&+(1+t)^{-2}\left\|\left[\widetilde{\rho},\widetilde{u}_1,\widetilde{\ta}\right]\right\|^2+\eps(1+t)^{-2}
+\sum\limits_{|\al|=2}\int_{\R\times{\R}^3}\frac{1+|\xi|}{\FM}|\pa^{\al}\FG|^2 dxd\xi ,
\end{split}
\end{equation}
provided $\eta>0$ and $\eps>0$ suitably small.

As to the second order $t-$derivative of $\left[\widetilde{\rho},\widetilde{u},\widetilde{\ta}\right]$, by \eqref{pb.con.}, one has
\begin{equation}\label{sed.t}
\begin{split}
\left\|\pa^2_t\left[\widetilde{\rho},\widetilde{u},\widetilde{\ta}\right]\right\|^2
\lesssim&\sum\limits_{|\al|=1}\left\|\pa^{\al}\pa_x\left[\widetilde{\rho},\widetilde{u},\widetilde{\ta},\widetilde{\phi}\right]\right\|^2
+\eps_0\sum\limits_{|\al|=1}\left\|\pa^{\al}\left[\widetilde{\rho},\widetilde{u},\widetilde{\ta},\widetilde{\phi}\right]\right\|^2
+(1+t)^{-2}\left\|\left[\widetilde{\rho},\widetilde{u}_1,\widetilde{\ta}\right]\right\|^2
\\&+\sum\limits_{|\al|=2}\int_{\R\times{\R}^3}\frac{1+|\xi|}{\FM}|\pa^{\al}\FG|^2 dxd\xi
+\eps_{0}\sum\limits_{|\al|=1}\int_{\R\times{\R}^3}\frac{1+|\xi|}{\FM}|\pa^{\al}\FG|^2 dxd\xi .
\end{split}
\end{equation}
In addition, it follows from \eqref{tphy}, \eqref{taylor2} and \eqref{xJ10} that
\begin{equation}\label{ptphyL22}
\begin{split}
\sum\limits_{|\al|=2}\left\{\left\|\pa^{\al}\widetilde{\phi}\right\|^2+\left\|\pa^{\al}\pa_x\widetilde{\phi}\right\|^2\right\}\leq&
C\sum\limits_{|\al|=2}\|\pa^{\al}\widetilde{\rho}\|^2+\eps_{0}\sum\limits_{|\al|=1}\left\|\pa^{\al}\widetilde{\phi}\right\|_{H^1}^2
+C(1+t)^{-2}\|\widetilde{\phi}\|^2
+C\eps(1+t)^{-2}.
\end{split}
\end{equation}
Finally, letting $\ka_2\gg \ka_3\gg \ka_4\gg \ka_5\gg\ka_6>0$, we get from
$\eqref{ma.dis1}+\eqref{p2u2}\times\ka_3
+\eqref{hveg2}\times\ka_4+\eqref{sed.t}\times\ka_5+\eqref{ptphyL22}\times\ka_6$ that
\begin{equation}\label{sum.eng1}
\begin{split}
\frac{d}{dt}&\CE_1(\widetilde{\rho},\widetilde{u},\widetilde{\theta},\widetilde{\phi})
+\ka_1\frac{d}{dt}\left(\widetilde{u}_1,\pa_x\widetilde{\rho}\right)
+\frac{3\ka_1}{2}\frac{d}{dt}\left(\pa_x\widetilde{\rho},\pa_x\widetilde{\rho}\right)
+\ka_3\sum\limits_{|\al|=1}\frac{d}{dt}\left\|\pa^{\al}\left[\widetilde{\rho},\widetilde{u},\widetilde{\ta}\right]\right\|^2\\
&+\ka_4\sum\limits_{|\al|=1}\frac{d}{dt}\left(\pa^{\al}\widetilde{u}_1,\pa^{\al}\pa_x\widetilde{\rho}\right)
+\la\int_{\R}\left|\left[\widetilde{\rho},\widetilde{u}_1,\widetilde{S}\right]\right|^2\pa_xu^r_{1}dx
\\&+\la\sum\limits_{1\leq|\al|\leq 2}\left\|\pa^{\al}\left[\widetilde{\rho},\widetilde{u},\widetilde{\theta}\right]\right\|^2
+\la\sum\limits_{1\leq|\al|\leq2}\left\|\pa^{\al}\widetilde{\phi}\right\|_{H^1}^2\\
\lesssim&(1+t)^{-\al_1}\left\|\left[\widetilde{\rho},\widetilde{u},\widetilde{\ta},\widetilde{\phi}\right]\right\|^2
+\eps^{\sigma_0}(1+t)^{-\al_0}
+\sum\limits_{1\leq|\al|\leq2}\int_{\R\times{\R}^3}\frac{1+|\xi|}{\FM}|\pa^{\al}\FG|^2 dxd\xi
\\&+\eps_{0}\sum\limits_{|\al|\leq1}\int_{\R\times\R^3}\frac{|\pa_{\xi_1}\pa^{\al}\widetilde{\FG}|^2}{\FM}dxd\xi
+\eps_{0}\int_{{\R}\times{\R}^3}\frac{(1+|\xi|)\left|\widetilde{\FG}\right|^2}{\FM_{i}}dxd\xi .
\end{split}
\end{equation}
Then \eqref{macro.eng} follows from \eqref{sum.eng1}. This concludes the proof of Proposition \ref{mac.eng.lem.}.

\begin{remark}
Note that the above estimates do not include the second order energy of $\left[\widetilde{\rho},\widetilde{u},\widetilde{\theta}\right]$,
and they will be left to the next subsection, where the dissipation of the microscopic part will be mainly addressed. This special treatment coincides with
the energy method developed in \cite{G06}.
\end{remark}

\end{proof}

\subsection{Energy estimates on the microscopic part}\label{sec3.2}
Now we turn to deduce the energy estimates on the microscopic part $\FG$.
The trick of
deducing the desired energy estimates can be outlined as follows. As mentioned in the previous subsection,
$$
\left\|\frac{\FG}{\sqrt{\FM_i}}\right\|^2_{L^2_{x,\xi}}
$$
is not integrable with respect to the time variable,
we first perform the zeroth order energy estimate on $\widetilde{\FG}$.
Then we directly present the higher order energy
estimates on $F$.
At last, we deduce the mixed derivative energy estimates on
$\pa^{\al}\pa^\be\widetilde{\FG}$ for $|\al|+|\be|\leq2$ and $|\be|\geq1$. It is shown that the above energy estimates only
 with respect to the global Maxwellian
$\FM_\ast$ or the local Maxwellian $\FM$ can not be closed. To overcome this difficulty,
one has to use the interplay of these two kinds of weighted energy estimates.

The main result of this subsection is given in the following

\begin{proposition}\label{g.eng.lem.}
Under the conditions listed in Proposition \ref{mac.eng.lem.}, it holds that
\begin{equation}\label{g.eng.}
\begin{split}
\sum\limits_{|\al|\leq1}&\left\|\pa^{\al}\left[\widetilde{\rho}, \widetilde{u}, \widetilde{\ta}\right](t)\right\|^2
+\sum\limits_{|\al|\leq2}
\left\|\pa^{\al} \widetilde{\phi}(t)\right\|_{H^1}^2
+\int_{{\R}\times{\R}^3}\frac{\left|\widetilde{\FG}\right|^2}{{\FM_{*}}}dxd\xi
+\sum\limits_{1\leq|\al|\leq2}\int_{{\R}\times{\R}^3}\frac{\left|\pa^{\al} F\right|^2}{\FM_*}dxd\xi
\\&+\sum\limits_{|\al|+|\be|\leq 2\atop{|\be|\geq1}}
\int_{{\R}\times{\R}^3}\frac{\left|\pa^{\al}\pa^\be \widetilde{\FG}\right|^2}{\FM_{*}}dxd\xi
+\sum\limits_{1\leq|\al|\leq2}\int_{0}^{T}\int_{\R\times{\R}^3}\frac{(1+|\xi|)\left|\pa^{\al} \FG\right|^2}{\FM_*}dxd\xi dt
\\&+\int_{0}^{T}\int_{{\R}\times{\R}^3}\frac{(1+|\xi|)\left|\widetilde{\FG}\right|^2}{\FM_{*}}dxd\xi dt
+\sum_{|\al|+|\be|\leq 2\atop{|\be|\geq1}}\int_{0}^{T}\int_{{\R}\times{\R}^3}
\frac{(1+|\xi|)\left|\pa^{\al}\pa^\be\widetilde{\FG}\right|^2}{\FM_{*}}dxd\xi dt
\\&+\int_{0}^{T}\int_{\R}\left|\left[\widetilde{\rho},\widetilde{u}_1,\widetilde{S},\widetilde{\phi}\right]\right|^2\pa_xu^r_{1}dxdt
+\sum\limits_{1\leq|\al|\leq 2}\int_{0}^{T}\left\|\pa^{\al}\left[\widetilde{\rho},\widetilde{u},\widetilde{\theta}\right]\right\|^2dt
+\sum\limits_{1\leq|\al|\leq2}\int_{0}^{T}\left\|\pa^{\al}\widetilde{\phi}\right\|_{H^1}^2dt
\\ \leq& C_0N^2(0)+C_0\eps^{\sigma_0},
\end{split}
\end{equation}
for $0\leq t \leq T$.
\end{proposition}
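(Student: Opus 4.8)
The plan is to combine Proposition \ref{mac.eng.lem.} with a matching set of microscopic energy estimates for $\widetilde{\FG}$, $\pa^\al F$ ($1\le|\al|\le2$), and $\pa^\al\pa^\be\widetilde{\FG}$ ($|\al|+|\be|\le2$, $|\be|\ge1$), and then to run a single Gr\"onwall-type argument after integrating the resulting differential inequalities in time. Concretely, I would first derive a \emph{zeroth-order} estimate on $\widetilde{\FG}$: multiply the equation satisfied by $\widetilde{\FG}=\FG-\overline{\FG}$ (obtained by subtracting $\pa_t\overline{\FG}$ from \eqref{micBE}, using \eqref{def.ng}, \eqref{micBEc}) by $\widetilde{\FG}/\FM_\ast$ and integrate over $\R\times\R^3$. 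The point of subtracting $\overline{\FG}$ is precisely that $\overline{\FG}$ absorbs the non-integrable source term ${\bf P}_1^{\FM}(\xi_1\pa_x\FM)$ — by \eqref{d.bG} and Lemma \ref{cl.Re.Re2.} it and its $\xi$-derivatives are controlled by $|\pa_x[u_1^r,\ta^r]|$, which is integrable in $t$. The dissipation comes from the spectral gap of $L_{\FM}$ (giving $\la\int(1+|\xi|)|\widetilde{\FG}|^2/\FM$), while the forcing term $\pa_x\phi\,\pa_{\xi_1}\FG$ must be split as $\pa_x\phi\,\pa_{\xi_1}\widetilde{\FG}+\pa_x\phi\,\pa_{\xi_1}\overline{\FG}$: the first piece is absorbed using $\|\pa_x\phi\|_{L^\infty}\lesssim\eps_0$ from \eqref{aps.phy}, the second via \eqref{d.bG}. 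The terms $Q(\FG,\FG)$ and $\Ta$ are handled by Lemma \ref{est.nonop} and Corollary \ref{inv.L.} together with the crucial bilinear bound \eqref{db.G}.

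\textbf{Higher-order estimates.} Next I would apply $\pa^\al$ for $1\le|\al|\le2$ to the full VPB equation \eqref{VPB} (equivalently to \eqref{mmBE}), multiply by $\pa^\al F/\FM_\ast$, and integrate; this produces $\frac{d}{dt}\sum_{1\le|\al|\le2}\|\pa^\al F/\sqrt{\FM_\ast}\|^2$ plus, after using $\pa^\al F=\pa^\al\FM+\pa^\al\FG$ and the orthogonality structure, the microscopic dissipation $\la\sum_{1\le|\al|\le2}\int(1+|\xi|)|\pa^\al\FG|^2/\FM$. On the right-hand side one meets the fluid quantities $\pa^\al[\rho,u,\ta]$ and $\pa^\al\phi$, which are exactly what Proposition \ref{mac.eng.lem.} controls; the commutator terms from $\pa^\al(\xi_1\pa_x\FM)$ and from differentiating $L_{\FM}$ are handled by \eqref{d.iL}; the self-consistent-force terms $\pa^\al(\pa_x\phi\,\pa_{\xi_1}F)$ are again split into a $\widetilde{\FG}$-part, an $\overline{\FG}$-part and an $\FM$-part, estimated respectively with the a priori smallness \eqref{aps}, with \eqref{d.bG}, and with Lemma \ref{cl.Re.Re2.} plus \eqref{aps.phy}. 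Then I would derive the mixed $\xi$-derivative estimates: apply $\pa^\al\pa^\be$ with $|\be|\ge1$ to the equation for $\widetilde{\FG}$, multiply by $\pa^\al\pa^\be\widetilde{\FG}/\FM_\ast$, and integrate. Here the gain comes again from the coercivity of $L_{\FM}$ in the $|\be|$-weighted norm (following Guo's velocity-derivative energy method, cf.\ \cite{G}); the loss is the term $-\pa_x\phi\,\pa_{\xi_1}\pa^\al\pa^{\be'}\widetilde{\FG}$ with $|\be'|<|\be|$, which is borrowed from the lower-order $\xi$-derivative dissipation, and the term $\pa^\al\pa^\be(\pa_x\phi\,\pa_{\xi_1}\overline{\FG})$, controlled by \eqref{d.bG}.

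\textbf{Closing the estimate.} I would then form a suitable linear combination $\mathcal{E}(t)=\CE_1(\widetilde\rho,\widetilde u,\widetilde\theta,\widetilde\phi)+\kappa_0(\text{correction terms})+\sum\kappa_j(\text{microscopic energies})$ with a hierarchy of small constants $1\gg\kappa_0\gg\kappa_1\gg\cdots$, chosen so that all the error terms of the form $\eps_0\times(\text{dissipation})$ produced above are absorbed into the good dissipative terms, and so that the fluid dissipation from \eqref{macro.eng} dominates the fluid terms on the right-hand sides of the microscopic estimates. This yields a differential inequality of the shape
\begin{equation*}
\frac{d}{dt}\mathcal{E}(t)+\la\,\mathcal{D}(t)\lesssim (1+t)^{-\zeta_1}\|[\widetilde\rho,\widetilde u,\widetilde\theta,\widetilde\phi]\|^2+\eps^{\sigma_0}(1+t)^{-\zeta_0},
\end{equation*}
where $\mathcal{D}(t)$ is the full dissipation rate appearing under the time integral in \eqref{g.eng.} and $\mathcal{E}(t)\sim$ the instantaneous energy there. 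Since $\zeta_0>1$ and $\zeta_1>1$ by \eqref{dec.co}, the first term is integrable after noting $\|[\widetilde\rho,\widetilde u,\widetilde\theta,\widetilde\phi]\|^2\lesssim\mathcal{E}(t)$ and applying a Gr\"onwall argument (the factor $(1+t)^{-\zeta_1}$ has finite time-integral, so the homogeneous growth is bounded), and the second term integrates to a constant multiple of $\eps^{\sigma_0}$. Integrating over $[0,T]$ and using $\mathcal{E}(0)\lesssim N^2(0)$ gives \eqref{g.eng.} with $C_0$ depending only on $\de_r,\eta_0$. \textbf{Main obstacle.} The hard part is the bookkeeping of the self-consistent force terms $\pa_x\phi\,\pa_{\xi_1}\FG$ at every differentiation order: unlike in \cite{LYYZ,YYZ,YZ1}, $\int|\pa_{\xi_1}\FG|^2/\FM\,dxd\xi$ is not time-integrable, so one genuinely needs the $\overline{\FG}$/$\widetilde{\FG}$ splitting together with the decay estimates of Lemma \ref{cl.Re.Re2.} for the $\overline{\FG}$-part and the smallness of $\|\pa_x\phi\|_{L^\infty}$ for the $\widetilde{\FG}$-part, and these two mechanisms must be kept consistent across the zeroth-order, the pure-$x$ higher-order, and the mixed $\xi$-derivative estimates while simultaneously ensuring the constants $\kappa_j$ can be chosen to absorb all cross terms — this interplay of the $\FM_\ast$-weighted and $\FM$-weighted energies (see \eqref{aps2}–\eqref{aps4}) is the delicate point of the whole argument.
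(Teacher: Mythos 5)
Your overall architecture (zeroth-order estimate for $\widetilde{\FG}$ weighted by $1/\FM_*$, higher-order estimates for $\pa^{\al}F$, mixed $\xi$-derivative estimates for $\pa^{\al}\pa^{\be}\widetilde{\FG}$, a hierarchy of small constants, then Gronwall) matches the paper's, and your identification of the role of $\overline{\FG}$, of the splitting of $\pa_x\phi\,\pa_{\xi_1}\FG$, and of the bounds \eqref{db.G}, \eqref{d.bG}, \eqref{d.iL} is correct. There are, however, two genuine gaps in your higher-order step, and they are exactly the points the paper singles out as the reason the standard argument fails to close.

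First, you derive the estimates for $1\le|\al|\le2$ only with the global weight $1/\FM_*$. As the paper shows (compare \eqref{2d.F.sum} with \eqref{2d.F.sum2}), the $\FM_*$-weighted inequality unavoidably carries the fluid terms $\sum_{1\le|\al|\le2}\|\pa^{\al}[\widetilde{\rho},\widetilde{u},\widetilde{\ta}]\|^2$ on its right-hand side with an $O(1)$ constant, so it must enter the final combination with a small coefficient $\ka_{8}$; but then its dissipation $\ka_{8}\sum\int(1+|\xi|)|\pa^{\al}\FG|^2/\FM_*$ can no longer absorb the $O(1)$ microscopic error $\sum\int(1+|\xi|)|\pa^{\al}\FG|^2/\FM$ appearing on the right-hand side of the macroscopic estimate \eqref{macro.eng}. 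Your two requirements --- that the fluid dissipation from \eqref{macro.eng} dominate the fluid terms produced by the microscopic estimates, and that the microscopic dissipation dominate the $\FG$-terms produced by \eqref{macro.eng} --- cannot both be met unless at least one of the two inequalities has small constants on its cross terms. The paper achieves this by deriving a second higher-order inequality with the local weight $R\ta/\FM$, in which all fluid and field error terms carry the small factor $\eps_0+\eta$; it is the interplay of these two weighted inequalities (not the conversions \eqref{aps2}--\eqref{aps4}) that makes the hierarchy $K_0,\ka_7,\ka_8,\ldots$ consistent.

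Second, and relatedly, you propose to ``estimate'' the $\FM$-part of the force term, i.e.\ $\CJ_{8}=-\bigl(\pa^{\al}\pa_x\phi\,\pa_{\xi_1}\FM,\,R\ta\pa^{\al}\FM/\FM\bigr)\approx\int_{\R}\pa^{\al}\pa_x\phi\,\rho\,\pa^{\al}u_1\,dx$, using Lemma \ref{cl.Re.Re2.} and \eqref{aps.phy}. This term is neither small nor time-integrable: Cauchy-Schwarz only yields $\|\pa^{\al}\pa_x\widetilde{\phi}\|^2+\|\pa^{\al}\widetilde{u}_1\|^2$ with $O(1)$ constants, which again breaks the hierarchy just described. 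The paper instead converts $\CJ_{8}$, via the continuity equation \eqref{trho0}, the Poisson equation \eqref{tphy} and a Taylor expansion of $\rho_e$, into the exact time derivative of the higher-order field energy $\CE_2(\widetilde{\phi})$ plus $(\eps_0+\eta)$-small and time-integrable remainders (see \eqref{CJ8.c}); this is also where the contribution $\sum_{1\le|\al|\le2}\|\pa^{\al}\widetilde{\phi}\|_{H^1}^2$ to the energy in \eqref{g.eng.} actually originates. Without these two devices the differential inequality you write at the end cannot be established.
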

\begin{proof}
We divide the proof by the following three steps.

\noindent{\bf Step 1.} {\it Zeroth order energy estimates for $\frac{\widetilde{\FG}}{\sqrt{\FM_*}}$.}

Notice that $\widetilde{\FG}$ solves
\begin{equation}\label{g.eq1.}
\begin{split}
\pa_t\widetilde{\FG}-L_{\FM}\widetilde{\FG}=
-\frac{3}{2\ta}{\FP_1^{\FM}}\left[\xi_1\FM\left(\xi\cdot\pa_x\widetilde{u}+\frac{|\xi-u|^2}{2\ta}\pa_x\widetilde{\ta}\right)\right]
-\FP_1^{\FM}\left(\xi_1\pa_x\FG\right)+\pa_x\phi\pa_\xi\FG+Q(\FG,\FG)-\pa_t\overline{\FG},
\end{split}
\end{equation}
where we have used the fact that
$$
\FP_1^{\FM}\left(\xi_1\pa_x\FM\right)-L_{\FM}\overline{\FG}
=\frac{3}{2\ta}{\FP_1^{\FM}}\left[\xi_1\FM\left(\xi\cdot\pa_x\widetilde{u}+\frac{|\xi-u|^2}{2\ta}\pa_x\widetilde{\ta}\right)\right].
$$

\begin{remark}
It is worth pointing out that $\overline{\FG}$ defined in \eqref{def.ng} is designed to deal with the
linear term ${\bf P}_1^{\FM}\left(\xi_1\pa_x\FM\right)$ which can not be directly controlled.
\end{remark}

Taking the inner product of \eqref{g.eq1.} with $\frac{\widetilde{\FG}}{\FM_{*}}$ over ${\R}\times{\R}^3$, one has
\begin{equation}\label{zero.g.eng.}
\begin{split}
\frac{1}{2}\frac{d}{dt}&\int_{{\R}\times{\R}^3}\frac{\left|\widetilde{\FG}\right|^2}{{\FM_{*}}}dxd\xi
\underbrace{-{\displaystyle\int_{{\R}\times{\R}^3}}\frac{\widetilde{\FG}L_{\FM}\widetilde{\FG}}{\FM_{*}}dxd\xi }_{\CJ_1}\\
=&\underbrace{-\left(\frac{3}{2\ta}{\FP_1^{\FM}}
\left[\xi_1\FM\left(\xi\cdot\pa_x\widetilde{u}+\frac{|\xi-u|^2}{2\ta}\pa_x\widetilde{\ta}\right)\right],\frac{\widetilde{\FG}}{\FM_{*}}\right)}_{\CJ_2}
\underbrace{-\left(\FP_1^{\FM}\left(\xi_1\pa_x\FG\right),\frac{\widetilde{\FG}}{\FM_{*}}\right)}_{\CJ_3}
\\
&\underbrace{+\left(\pa_x\phi\pa_{\xi_1}\FG,\frac{\widetilde{\FG}}{\FM_{*}}\right)}_{\CJ_4}
+\underbrace{\left(Q(\FG,\FG),\frac{\widetilde{\FG}}{\FM_{*}}\right)}_{\CJ_5}
\underbrace{-\left(\pa_t\overline{\FG},\frac{\widetilde{\FG}}{\FM_{*}}\right)}_{\CJ_6}.
\end{split}
\end{equation}
From Lemma \ref{co.est.}, we see that
\begin{eqnarray*}
\begin{array}{rl}
\CJ_1\geq \de{\displaystyle
\int_{{\R}\times{\R}^3}}\frac{(1+|\xi|)\left|\widetilde{\FG}\right|^2}{\FM_{*}}dxd\xi .
\end{array}
\end{eqnarray*}
Moreover, we get from Cauchy-Schwarz's inequality with $0<\eta<1$ and Remark \ref{rem.rjads}
that
\begin{eqnarray*}
\begin{array}{rl}
|\CJ_2|\leq& \eta{\displaystyle
\int_{{\R}\times{\R}^3}}\frac{(1+|\xi|)\left|\widetilde{\FG}\right|^2}{\FM_{*}}dxd\xi +C_\eta\left\|\pa_x\left[\widetilde{u},\widetilde{\ta}\right]\right\|^2.
\end{array}
\end{eqnarray*}
By integration by parts and applying Lemma \ref{cl.Re.Re2.}, the a priori assumption \eqref{aps} and \eqref{d.bG}, one can see that
$\CJ_3$, $\CJ_4$  and $\CJ_6$ can be bounded as follows
\begin{equation*}
\begin{split}
|\CJ_3|\leq& \left|\left(\xi_1\pa_x\FG,\frac{\widetilde{\FG}}{\FM_{*}}\right)\right|+
\left|\left(\FP_0^{\FM}\left(\xi_1\pa_x\FG\right),\frac{\widetilde{\FG}}{\FM_{*}}\right)\right|\\
\leq& \eta\int_{{\R}\times{\R}^3}\frac{(1+|\xi|)\left|\widetilde{\FG}\right|^2}{\FM_{*}}dxd\xi
+C_\eta\int_{{\R}\times{\R}^3}\frac{(1+|\xi|)\left|\pa_x\FG\right|^2}{\FM_{*}}dxd\xi ,
\end{split}
\end{equation*}
\begin{equation*}
\begin{split}
|\CJ_4|\leq& \left|\left(\pa_x\phi\pa_{\xi_1}\widetilde{\FG},\frac{\widetilde{\FG}}{\FM_{*}}\right)\right|+
\left|\left(\pa_x\phi\pa_{\xi_1}\overline{\FG},\frac{\widetilde{\FG}}{\FM_{*}}\right)\right|
\\ \leq& C\left\|\pa_x\phi\right\|_{L^\infty}\int_{{\R}\times{\R}^3}\frac{(1+|\xi|)\left|\widetilde{\FG}\right|^2}{\FM_{*}}dxd\xi
+\eta\int_{{\R}\times{\R}^3}\frac{(1+|\xi|)\left|\widetilde{\FG}\right|^2}{\FM_{*}}dxd\xi
\\
&+C_\eta \left\|\pa_x\phi\pa_x\left[u_1^r,\ta^r\right]\right\|^2
\\
\leq& (\eps_{0}+\eta)\int_{{\R}\times{\R}^3}\frac{(1+|\xi|)\left|\widetilde{\FG}\right|^2}{\FM_{*}}dxd\xi
+C_\eta\eps_0\left\|\pa_x\widetilde{\phi}\right\|^2
+C_\eta\eps(1+t)^{-2},
\end{split}
\end{equation*}
\begin{equation*}
\begin{split}
|\CJ_6|
\leq \eta\int_{{\R}\times{\R}^3}\frac{(1+|\xi|)\left|\widetilde{\FG}\right|^2}{\FM_{*}}dxd\xi
+C_\eta\eps\sum\limits_{{|\al|=1}}\left\|\pa^\al\left[\widetilde{\rho},\widetilde{u},\widetilde{\ta}\right]\right\|^2+C_\eta\eps(1+t)^{-2}.
\end{split}
\end{equation*}
For $\CJ_5$, it follows from \eqref{db.G} and Cauchy-Schwarz's inequality with $\eta$ that
\begin{equation*}
\begin{split}
|\CJ_5|\leq& \eta\int_{{\R}\times{\R}^3}\frac{(1+|\xi|)\left|\widetilde{\FG}\right|^2}{\FM_{*}}dxd\xi
+C_\eta\int_{\R}\left(\int_{\R^3}\frac{(1+|\xi|)|\FG|^2}{\FM_*}d\xi\right) \left(\int_{\R^3}\frac{|\FG|^2}{\FM_*}d\xi\right) dx\\
\leq& (\eps_{0}+\eta)\int_{{\R}\times{\R}^3}\frac{(1+|\xi|)\left|\widetilde{\FG}\right|^2}{\FM_{*}}dxd\xi
+C_\eta \eps(1+t)^{-2}.
\end{split}
\end{equation*}
Now substituting the above estimates into \eqref{zero.g.eng.}, we arrive at
\begin{equation}\label{zero.g.eng1.}
\begin{split}
\frac{d}{dt}&\int_{{\R}\times{\R}^3}\frac{\left|\widetilde{\FG}\right|^2}{{\FM_{*}}}dxd\xi
+ \la{\displaystyle
\int_{{\R}\times{\R}^3}}\frac{(1+|\xi|)\left|\widetilde{\FG}\right|^2}{\FM_{*}}dxd\xi \\
\leq &C\int_{{\R}\times{\R}^3}\frac{(1+|\xi|)\left|\pa_x\FG\right|^2}{\FM_{*}}dxd\xi
+C{\sum\limits_{|\al|=1}\left\{ \eps \left\|\pa^{\al}\widetilde{\rho}\right\|^2+\left\|\pa^{\al}\left[\widetilde{u},\widetilde{\ta}\right]\right\|^2\right\}
+C{\eps_0}\left\|\pa_x\widetilde{\phi}\right\|^2}
+C\eps(1+t)^{-2}.
\end{split}
\end{equation}

\noindent{\bf Step 2.} {\it Higher order dissipation.} Let us now deduce the higher order dissipation of $\FG$.
Note that even for $|\al|\geq1$, one can not directly obtain the dissipation of $\pa^{\al} \FG/\sqrt{\FM}$ with the aid of \eqref{micBE},
since the linear term $\left(\pa^{\al} {\bf P}^{{\bf M}}_1\left(\xi_1\pa_x{\bf
M}\right),\frac{\pa^{\al} \FG}{\FM}\right)$
makes a big trouble. To overcome this difficulty, we first deduce the energy estimates on $\pa^{\al} F$ by using the original equation $\eqref{VPB}_1$ with respect to the local Maxwellian $\FM$,
in this case, the corresponding term becomes $\left(\pa^{\al}\left(\xi_1\pa_xF\right),\frac{\pa^{\al} F}{\FM}\right)$, which can be smoothly controlled.
Then we turn to obtain another estimates based on the global Maxwellian $\FM_*$. The desired estimates will be derived by the interplay of these two kinds of weighted energy estimates.

Let $1\leq|\al|\leq2$. Taking the inner product of $\pa^{\al} \eqref{VPB}_1$ with $\frac{R\ta\pa^{\al} F}{\FM}$ with respect to $x$ and $\xi$ over $\R\times\R^3$, one has
\begin{equation}\label{2d.F}
\begin{split}
\frac{1}{2}\frac{d}{dt}&\int_{{\R}\times{\R}^3}\frac{R\ta\left|\pa^{\al} F\right|^2}{\FM}dxd\xi
\underbrace{-\left(L_{\FM}\pa^{\al}\FG,\frac{R\ta\pa^{\al} \FG}{\FM}\right)}_{\CJ_{7}}
\underbrace{-\left(\pa^{\al}\pa_x\phi\pa_{\xi_1}\FM,\frac{R\ta\pa^{\al} \FM}{\FM}\right)}_{\CJ_{8}}\\
=&\underbrace{ -\frac{1}{2}\left(R\ta(\pa^{\al} F)^2,\FM^{-2}\pa_t\FM\right)+\frac{1}{2}\left((\pa^{\al} F)^2,\frac{R\pa_t\ta}{\FM}\right)}_{\CJ_{9}}
\\&+\underbrace{\sum\limits_{{0<\al'\leq \al}}C_{\al'}^{\al}
\left(Q(\pa^{\al'} \FM, \pa^{\al-\al'}\FG)+Q(\pa^{\al-\al'}\FG,\pa^{\al'} \FM),\frac{R\ta\pa^{\al} F}{\FM}\right)}_{\CJ_{10}}\\&+\underbrace{\left(L_{\FM}\pa^{\al}\FG, \FP_1^{\FM}\left(\frac{R\ta\pa^{\al} \FM}{\FM}\right)\right)}_{\CJ_{11}}
\underbrace{-\left(\xi_1\pa^{\al}\pa_xF,\frac{R\ta\pa^{\al} F}{\FM}\right)}_{\CJ_{12}}
\\&\underbrace{+\sum\limits_{{0<\al'<\al}}C_{\al'}^{\al}\left(\pa^{\al-\al'} \pa_x\phi \pa^{\al'} \pa_{\xi_1}F,\frac{R\ta\pa^{\al} F}{\FM}\right)}_{\CJ_{13}}
\\&\underbrace{+\left(\pa_x\phi\pa^{\al}\pa_{\xi_1}F,\frac{R\ta\pa^{\al} F}{\FM}\right)}_{\CJ_{14}}
\underbrace{+\left(\pa^{\al}\pa_x\phi\pa_{\xi_1}\FG,\frac{R\ta\pa^{\al} F}{\FM}\right)}_{\CJ_{15}}
+\underbrace{\left(\pa^{\al} Q(\FG,\FG),\frac{R\ta\pa^{\al} F}{\FM}\right)}_{\CJ_{16}},
\end{split}
\end{equation}
where we have used the fact that
$$
\left(\pa^{\al}\pa_x\phi\pa_{\xi_1}\FM,\frac{R\ta\pa^{\al} \FG}{\FM}\right)=0.
$$
We note that $\CJ_{13}$ is just $\CJ_{14}$ when $|\al|=1$.

Lemma \ref{co.est.} implies that
\begin{equation*}
\begin{split}
\CJ_{7}\geq \de\int_{{\R}\times{\R}^3}\frac{(1+|\xi|)\left|\pa^{\al}\FG\right|^2}{\FM}dxd\xi .
\end{split}
\end{equation*}
For $\CJ_{8}$, 
we have
\begin{equation}\label{CJ8.c}
\begin{split}
&{\sum\limits_{1\leq|\al|\leq 2}\left|\CJ_{8}-\frac{1}{2}\frac{d}{dt}\left\{\left\|\pa^{\al}\pa_x\widetilde{\phi}\right\|^2
+\left\|\sqrt{\rho_{e}'(\phi^r)}\pa^{\al}\widetilde{\phi}\right\|^2\right\}
+\frac{1}{2}\frac{d}{dt}\left(\pa^\al\widetilde{\phi}\int_{\phi^r}^{\phi}\rho_{e}''(\varrho) d\varrho
,\pa^{\al}\widetilde{\phi}\right)\right|}
\\& \qquad  \lesssim
(\eps_0+\eta)\sum\limits_{1\leq|\al|\leq2}\left\|\pa^\al\widetilde{\phi}\right\|_{H^1}^2
+(\eps_0+\eta)\sum\limits_{1\leq|\al|\leq2}\left\|\pa^\al\left[\widetilde{\rho},\widetilde{u}_1\right]\right\|^2
\\& \qquad\quad+{C_\eta}(1+t)^{-2}\left\|\left[\widetilde{\rho},\widetilde{u}_1,\widetilde{\phi}\right]\right\|^2
+{C_\eta}\eps^{3/4}(1+t)^{-5/4},
\end{split}
\end{equation}
whose proof is given in the appendix.
For the remaining terms in \eqref{2d.F},
we only present in what follows the estimations in the case of $|\al|=1$. When $|\al|=2$, since $\left\|\pa^{\al}\left[\rh^r,u^r,\ta^r,\phi^r\right]\right\|_{L^p}$ $(p\geq1)$ decays much faster, the corresponding estimates are similar to those for the case $|\al|=1$ and are much easier to obtain. Hence the details for the case $|\al|=2$ are omitted for brevity.
Now, by applying Lemma \ref{cl.Re.Re2.}, Sobolev's inequality and Cauchy-Schwarz's inequality, we have
\begin{equation*}
\begin{split}
\sum\limits_{|\al|=1}|\CJ_{9}|\lesssim& \sum\limits_{|\al|=1}
\int_{\R\times{\R}^3}\frac{(1+|\xi|)|\pa_t[\rho,u,\ta]|(\left|\pa^{\al} \FM\right|^2+\left|\pa^{\al} \FG\right|^2)}{\FM_*}dxd\xi
\\
\lesssim& \sum\limits_{|\al|=1}\bigg\{\int_{\R}\left|\pa^{\al}\left[\widetilde{\rho},\widetilde{u},\widetilde{\ta}\right]\right|^2
|\pa_t\left[\rho,u,\ta \right]|dx+
\int_{\R}|\pa^{\al}[\rho^r,u^r,\ta^r]|^2
|\pa_t[\rho^r,u^r,\ta^r]|dx
\\&+\int_{\R}|\pa^{\al}[\rho^r,u^r,\ta^r]|^2
\left|\pa_t\left[\widetilde{\rho},\widetilde{u},\widetilde{\ta}\right]\right|dx\bigg\}
+\eps_{0}\sum\limits_{|\al|=1}\int_{\R\times{\R}^3}\frac{(1+|\xi|)\left|\pa^{\al}\FG\right|^2}{\FM_*}dxd\xi \\
\lesssim& \sum\limits_{|\al|=1}\|\pa_t[\rho^r,u^r,\ta^r]\|^{1/2}\left\|\pa_x\pa_t\left[\rho^r,u^r,\ta^r\right]\right\|^{1/2}
\|\pa^{\al}[\rho^r,u^r,\ta^r]\|^2
\\&+\sum\limits_{|\al|=1}{C_\eta}\|\pa^{\al}[\rho^r,u^r,\ta^r]\|^2\|\pa^{\al}[\rho^r,u^r,\ta^r]\|\|\pa_x\pa^{\al}[\rho^r,u^r,\ta^r]\|
\\&+\sum\limits_{|\al|=1}(\eps_0+\eta)\left\|\pa^{\al}\left[\widetilde{\rho},\widetilde{u},\widetilde{\ta}\right]\right\|^2
+\eps_{0}\sum\limits_{|\al|=1}\int_{\R\times{\R}^3}\frac{(1+|\xi|)\left|\pa^{\al}\FG\right|^2}{\FM_*}dxd\xi
\\\lesssim &\sum\limits_{|\al|=1}(\eps_0+\eta)\left\|\pa^{\al}\left[\widetilde{\rho},\widetilde{u},\widetilde{\ta}\right]\right\|^2
+\eps_{0}\sum\limits_{|\al|=1}\int_{\R\times{\R}^3}\frac{(1+|\xi|)\left|\pa^{\al}\FG\right|^2}{\FM_*}dxd\xi
+{C_\eta}\eps^{3/4}(1+t)^{-5/4},
\end{split}
\end{equation*}
where we also used the trivial inequality $\eps(1+t)^{-2}<\eps^{3/4}(1+t)^{-5/4}$ for $0<\eps<1$.

When $|\al|=1$, one sees that $\CJ_{10}$ becomes
$$
\sum\limits_{|\al|=1}\left(Q(\pa^{\al} \FM, \FG)+Q(\FG,\pa^{\al} \FM),\frac{R\ta\pa^{\al} \FG}{\FM}\right),
$$
and then it follows that
\begin{equation*}
\begin{split}
 \sum\limits_{|\al|=1}|\CJ_{10}|\lesssim& \eta\int_{\R\times{\R}^3}\frac{(1+|\xi|)\left|\pa^{\al} \FG\right|^2}{\FM}dxd\xi
+C_\eta\int_{\R}\left(\int_{\R^3}\frac{(1+|\xi|)|\pa^{\al}\FM|^2}{\FM}d\xi\right)
\left(\int_{\R^3}\frac{|\widetilde{\FG}+\overline{\FG}|^2}{\FM}d\xi\right) dx
\\&+C_\eta\int_{\R}\left(\int_{\R^3}\frac{|\pa^{\al}\FM|^2}{\FM}d\xi\right)
\left(\int_{\R^3}\frac{(1+|\xi|)|\widetilde{\FG}+\overline{\FG}|^2}{\FM}d\xi\right) dx
\\ \lesssim& \eta\int_{\R\times{\R}^3}\frac{(1+|\xi|)\left|\pa^{\al} \FG\right|^2}{\FM}dxd\xi +{C_\eta}\eps_{0}\int_{{\R}\times{\R}^3}\frac{(1+|\xi|)\left|\widetilde{\FG}\right|^2}{\FM_{*}}dxd\xi
\\&+{C_\eta}\eps_{0}\sum\limits_{|\al|=1}\left\|\pa^{\al}\left[\widetilde{\rho},\widetilde{u},\widetilde{\ta}\right]\right\|^2
+{C_\eta}\eps(1+t)^{-2}.
\end{split}
\end{equation*}
Note that $\CJ_{11}$ vanishes for $|\al|=1$. By integration by parts and performing the similar calculations as for obtaining $\CJ_{9}$, one sees that
$|\CJ_{12}|$ is bounded by
\begin{equation*}
\begin{split}
 \sum\limits_{|\al|=1}&
\int_{\R\times{\R}^3}\frac{(1+|\xi|)|\pa^{\al}[\rho,u,\ta]|(\left|\pa^{\al} \FM\right|^2+\left|\pa^{\al} \FG\right|^2)}{\FM_*}dxd\xi
\\ \lesssim &(\eps_0+\eta)\sum\limits_{|\al|=1}\left\|\pa^{\al}\left[\widetilde{\rho},\widetilde{u},\widetilde{\ta}\right]\right\|^2
+\eps_{0}\sum\limits_{|\al|=1}\int_{\R\times{\R}^3}\frac{(1+|\xi|)\left|\pa^{\al}\FG\right|^2}{\FM_*}dxd\xi
+{C_\eta}\eps^{3/4}(1+t)^{-5/4}.
\end{split}
\end{equation*}
For $\CJ_{13}$ and $\CJ_{14}$ with $|\al|=1$, we use $F=\FM+\FG$ again, to obtain
\begin{equation*}
\begin{split}
|\CJ_{13}|,\ |\CJ_{14}|=&\Bigg|\left(\pa_x\phi\pa^{\al}\pa_{\xi_1}\FG,\frac{R\ta\pa^{\al}\FG}{\FM}\right)
+\left(\pa_x\phi\pa^{\al}\pa_{\xi_1}\FM,\frac{R\ta\pa^{\al}\FM}{\FM}\right)
+\left(\pa_x\phi\pa^{\al}\pa_{\xi_1}\FM,\frac{R\ta\pa^{\al}\FG}{\FM}\right)\Bigg|\\
\lesssim &{\sum\limits_{|\al|=1}\int_{\R}|\pa_x\phi||\pa^{\al} [\rho,u,\ta]|^2 dx
+\sum\limits_{|\al|=1}\eps_0\left\|\pa^{\al}\left[\widetilde{\rho},\widetilde{u},\widetilde{\ta},\widetilde{\phi}\right]\right\|^2}
+\eps_0\sum\limits_{|\al|=1}\int_{\R\times{\R}^3}\frac{|\pa^{\al}\pa_{\xi_1}\overline{\FG}|^2}{\FM}dxd\xi
\\&+\eps_{0}\sum\limits_{|\al|=1}\int_{\R\times{\R}^3}\frac{(1+|\xi|)
\left|\pa^{\al}\pa_{\xi_1}\widetilde{\FG}\right|^2}{\FM}dxd\xi
+C_\eta\sum\limits_{|\al|=1}\left\|\pa^\al\phi^r\pa^\al\left[\rh^r,u^r,\ta^r\right]\right\|^2
\\&+(\eps_{0}+\eta)\sum\limits_{|\al|=1}\int_{\R\times{\R}^3}\frac{(1+|\xi|)\left|\pa^{\al}\FG\right|^2}{\FM}dxd\xi
\\ \lesssim &\sum\limits_{|\al|=1}\eps_0\left\|\pa^{\al}\left[\widetilde{\rho},\widetilde{u},\widetilde{\ta},\widetilde{\phi}\right]\right\|^2
+\eps_{0}\sum\limits_{|\al|=1}\int_{\R\times{\R}^3}\frac{(1+|\xi|)
\left|\pa^{\al}\pa_{\xi_1}\widetilde{\FG}\right|^2}{\FM}dxd\xi
\\&+(\eps_{0}+\eta)\sum\limits_{|\al|=1}\int_{\R\times{\R}^3}\frac{(1+|\xi|)\left|\pa^{\al}\FG\right|^2}{\FM}dxd\xi
+{C_\eta}\eps^{3/4}(1+t)^{-5/4},
\end{split}
\end{equation*}
and
\begin{equation*}
\begin{split}
\sum\limits_{|\al|=1}|\CJ_{15}|=&\sum\limits_{|\al|=1}\left|\left(\pa^{\al}\pa_x\phi\pa_{\xi_1}\FG,\frac{R\ta\pa^{\al}\FG}{\FM}\right)\right|\\
\lesssim &\sum\limits_{|\al|=1}{C_\eta}\int_{\R\times{\R}^3}|\pa^{\al}\pa_x\phi|^2\frac{|\pa_{\xi_1}\overline{\FG}|^2}{\FM}dxd\xi
+\eps_{0}\int_{\R\times{\R}^3}\frac{(1+|\xi|)
\left|\pa_{\xi_1}\widetilde{\FG}\right|^2}{\FM}dxd\xi
\\&+(\eps_{0}+\eta)\sum\limits_{|\al|=1}\int_{\R\times{\R}^3}\frac{(1+|\xi|)\left|\pa^{\al}\FG\right|^2}{\FM}dxd\xi
\\ \lesssim &\sum\limits_{|\al|=1}{C_\eta}\eps_0\left\|\pa_x\pa^{\al}\widetilde{\phi}\right\|^2
+\eps_{0}\int_{\R\times{\R}^3}\frac{(1+|\xi|)
\left|\pa_{\xi_1}\widetilde{\FG}\right|^2}{\FM}dxd\xi
\\&+(\eps_{0}+\eta)\sum\limits_{|\al|=1}\int_{\R\times{\R}^3}\frac{(1+|\xi|)\left|\pa^{\al}\FG\right|^2}{\FM}dxd\xi
+{C_\eta}\eps(1+t)^{-2}.
\end{split}
\end{equation*}
As to the last term $\CJ_{16}$, we get from Lemma \ref{est.nonop} and Cauchy-Schwarz's inequality that
\begin{equation*}
\begin{split}
\sum\limits_{|\al|=1}|\CJ_{16}|=&\sum\limits_{|\al|=1}\left|\left(\pa^{\al} Q(\FG,\FG),\frac{R\ta\pa^{\al} \FG}{\FM}\right)\right|
\\ \lesssim& \eta\sum\limits_{|\al|=1}\int_{\R\times{\R}^3}\frac{(1+|\xi|)\left|\pa^{\al} \FG\right|^2}{\FM}dxd\xi
+\sum\limits_{|\al|=1}{C_\eta}\int_{\R}\left(\int_{\R^3}\frac{(1+|\xi|)\left|\pa^{\al} \FG\right|^2}{\FM}d\xi\right)
\left(\int_{\R^3}\frac{\left|\FG\right|^2}{\FM}d\xi\right) dx
\\&+\sum\limits_{|\al|=1}{C_\eta}\int_{\R}\left(\int_{\R^3}\frac{\left|\pa^{\al} \FG\right|^2}{\FM}d\xi\right)
\left(\int_{\R^3}\frac{(1+|\xi|)\left|\FG\right|^2}{\FM}d\xi\right) dx
\\ \lesssim&(\eta+\eps_0)\sum\limits_{|\al|=1}\int_{\R\times{\R}^3}\frac{(1+|\xi|)\left|\pa^{\al} \FG\right|^2}{\FM}dxd\xi
+{C_\eta}\eps_0\int_{\R\times{\R}^3}\frac{(1+|\xi|)\left| \widetilde{\FG}\right|^2}{\FM}dxd\xi .
\end{split}
\end{equation*}
It should be noted that when $|\al|=2$,
$$
\CJ_{16}=\left(\pa^{\al} Q(\FG,\FG),\frac{R\ta\pa^{\al} \FM}{\FM}\right)+\left(\pa^{\al} Q(\FG,\FG),\frac{R\ta\pa^{\al} \FG}{\FM}\right),
$$
with $\left(\pa^{\al} Q(\FG,\FG),\frac{R\ta\pa^{\al} \FM}{\FM}\right)$ being non-zero, but in case $|\al|=2$, the term $\frac{R\ta\pa^{\al} \FM}{\FM}$ becomes quadratic, which can be
handled as in \eqref{aps2}.

Substituting the above estimates for $\CJ_{l}$ $(7\leq l\leq 16)$ into \eqref{2d.F}, we see that
\begin{equation}\label{2d.F.sum}
\begin{split}
\frac{d}{dt}&\left\{\sum\limits_{1\leq|\al|\leq2}\int_{{\R}\times{\R}^3}\frac{R\ta\left|\pa^{\al} F\right|^2}{\FM}dxd\xi
+\CE_2(\widetilde{\phi})
\right\}
+\la\sum\limits_{1\leq|\al|\leq2}\int_{\R\times{\R}^3}\frac{(1+|\xi|)\left|\pa^{\al} \FG\right|^2}{\FM}dxd\xi \\
\lesssim&{C_\eta}\eps_{0}\sum\limits_{1\leq|\al|\leq2}\int_{\R\times{\R}^3}\frac{(1+|\xi|)\left|\pa^{\al}\FG\right|^2}{\FM_*}dxd\xi
+{C_\eta}\eps_{0}\sum\limits_{|\al|\leq1}\int_{\R\times{\R}^3}\frac{(1+|\xi|)\left|\pa^{\al}\pa_{\xi_1} \widetilde{\FG}\right|^2}{\FM}dxd\xi
\\&+{C_\eta}\eps_{0}\int_{{\R}\times{\R}^3}\frac{(1+|\xi|)\left|\widetilde{\FG}\right|^2}{\FM}dxd\xi
+(\eps_{0}+\eta)\sum\limits_{1\leq|\al|\leq2}\left\{\left\|\pa^{\al}\left[\widetilde{\rho},\widetilde{u},\widetilde{\ta}\right]\right\|^2
+\left\|\pa^{\al}\widetilde{\phi}\right\|^2_{H^1}\right\}\\
&
+{C_\eta}\eps^{3/4}(1+t)^{-5/4},
\end{split}
\end{equation}
where we have set
\begin{equation*}
\CE_2(\widetilde{\phi})=\sum\limits_{1\leq|\al|\leq2}\left\{\left\|\pa^{\al}\pa_x\widetilde{\phi}\right\|^2
+\left\|\sqrt{\rho_{e}'(\phi^r)}\pa^{\al}\widetilde{\phi}\right\|^2\right\}
-\sum\limits_{1\leq|\al|\leq2}\left(\pa^\al\widetilde{\phi}\int_{\phi^r}^{\phi}\rho_{e}''(\varrho) d\varrho
,\pa^{\al}\widetilde{\phi}\right)
\sim \left\|\pa_x\widetilde{\phi}\right\|_{{H^2}}^2.
\end{equation*}
Similarly, one can obtain the following energy estimates for $\pa^{\al} F$ $(1\leq|\al|\leq2)$ with respect to the global Maxwellian $\FM_*$: 
\begin{equation}\label{2d.F.sum2}
\begin{split}
\frac{d}{dt}&\sum\limits_{1\leq|\al|\leq2}\int_{{\R}\times{\R}^3}\frac{\left|\pa^{\al} F\right|^2}{\FM_*}dxd\xi
+\la\sum\limits_{1\leq|\al|\leq2}\int_{\R\times{\R}^3}\frac{(1+|\xi|)\left|\pa^{\al} \FG\right|^2}{\FM_*}dxd\xi \\
\lesssim&
(\eps_{0}+\eta)\sum\limits_{|\al|\leq1}\int_{\R\times{\R}^3}\frac{(1+|\xi|)\left|\pa^{\al}\pa_{\xi_1} \widetilde{\FG}\right|^2}{\FM_*}dxd\xi +(\eps_{0}+\eta)\int_{{\R}\times{\R}^3}\frac{(1+|\xi|)\left|\widetilde{\FG}\right|^2}{\FM_{*}}dxd\xi
\\&+{C_\eta}\sum\limits_{1\leq|\al|\leq2}\left\|\pa^{\al}\left[\widetilde{\rho},\widetilde{u},\widetilde{\ta}\right]\right\|^2
+{C_\eta}\sum\limits_{1\leq|\al|\leq2}
\left\|\pa^{\al}\widetilde{\phi}\right\|^2_{H^1}
+{C_\eta}\eps^{3/4}(1+t)^{-5/4},
\end{split}
\end{equation}
whose proof is also given in the appendix.
With \eqref{2d.F.sum} in hand, letting $1\gg\ka_7>0$, we get from the summation of \eqref{2d.F.sum} and $\eqref{macro.eng}\times\ka_7$ that
\begin{equation}\label{mi.diss2}
\begin{split}
\ka_7\frac{d}{dt}&\CE_1(\widetilde{\rho},\widetilde{u},\widetilde{\theta},\widetilde{\phi})
-\ka_7\ka_0\frac{d}{dt}\sum\limits_{|\al|=1}\left(\pa^{\al}\widetilde{u}_1,\pa^{\al}\pa_x\widetilde{\rho}\right)
\\
&+\frac{d}{dt}\sum\limits_{1\leq|\al|\leq2}\left\{\int_{{\R}\times{\R}^3}\frac{R\ta\left|\pa^{\al} F\right|^2}{\FM}dxd\xi
+\CE_2(\widetilde{\phi})
\right\}
+\la\int_{\R}\left|\left[\widetilde{\rho},\widetilde{u}_1,\widetilde{S},\widetilde{\phi}\right]\right|^2\pa_xu^r_{1}dx
\\&+\la\sum\limits_{1\leq|\al|\leq 2}\left\|\pa^{\al}\left[\widetilde{\rho},\widetilde{u},\widetilde{\theta}\right]\right\|^2
+\la\sum\limits_{1\leq|\al|\leq2}\left\|\pa^{\al}\widetilde{\phi}\right\|_{H^1}^2
+\la \sum\limits_{1\leq|\al|\leq2}\int_{{\R}\times{\R}^3}\frac{(1+|\xi|)\left|\pa^{\al}\FG\right|^2}{\FM}dxd\xi
\\
\lesssim&(1+t)^{-\zeta_1}\left\|\left[\widetilde{\rho},\widetilde{u},\widetilde{\ta},\widetilde{\phi}\right]\right\|^2
+\eps^{\sigma_0}(1+t)^{-\zeta_0}
+\eps_{0}\sum\limits_{|\al|\leq1}\int_{\R\times\R^3}\frac{|\pa_{\xi_1}\pa^{\al}\widetilde{\FG}|^2}{\FM}dxd\xi
\\&+\eps_{0}\sum\limits_{1\leq|\al|\leq2}\int_{\R\times{\R}^3}\frac{(1+|\xi|)\left|\pa^{\al}\FG\right|^2}{\FM_*}dxd\xi
+\eps_{0}\int_{{\R}\times{\R}^3}\frac{(1+|\xi|)\left|\widetilde{\FG}\right|^2}{\FM_{*}}dxd\xi ,
\end{split}
\end{equation}
where we have also used the fact that $\sigma_0<3/4$ and $1<\zeta_0<5/4$.

On the other hand, by choosing $1\gg\ka_{8}\gg\ka_{9}>0$, it follows from the summation of $\eqref{zero.g.eng1.}\times\ka_{9}$
 and $\eqref{2d.F.sum2}\times\ka_{8}$ that
\begin{equation}\label{mi.diss3}
\begin{split}
\ka_{9}\frac{d}{dt}&\int_{{\R}\times{\R}^3}\frac{\left|\widetilde{\FG}\right|^2}{{\FM_{*}}}dxd\xi
+\ka_{8}\frac{d}{dt}\sum\limits_{1\leq|\al|\leq2}\int_{{\R}\times{\R}^3}\frac{\left|\pa^{\al} F\right|^2}{\FM_*}dxd\xi
\\
&+\la\sum\limits_{1\leq|\al|\leq2}\int_{\R\times{\R}^3}\frac{(1+|\xi|)\left|\pa^{\al} \FG\right|^2}{\FM_*}dxd\xi
+\la{\displaystyle
\int_{{\R}\times{\R}^3}}\frac{(1+|\xi|)\left|\widetilde{\FG}\right|^2}{\FM_{*}}dxd\xi
\\
\lesssim&(\ka_{8}+\ka_{9}){C_\eta}\sum\limits_{1\leq|\al|\leq2}
\left\{\left\|\pa^{\al}\left[\widetilde{\rho},\widetilde{u},\widetilde{\ta}\right]\right\|^2 +\left\|\pa^{\al}\widetilde{\phi}\right\|^2_{H^1}\right\}
\\
&+{C_\eta}\eps(1+t)^{-2}+(\eps_{0}+\eta)\sum\limits_{|\al|\leq1}\int_{\R\times\R^3}\frac{|\pa_{\xi_1}\pa^{\al}\widetilde{\FG}|^2}{\FM}dxd\xi.
\end{split}
\end{equation}

\noindent{\bf Step 3.} {\it Energy estimates with mixed derivatives.} In what follows, we deduce the energy estimates on the mixed derivative term $\pa^{\al}\pa^\be \widetilde{\FG}$. To do so, letting $|\be|\geq1$ and $|\al|+|\be|\leq 2$, acting $\pa^{\al}\pa^\be $ to \eqref{g.eq1.} and taking the inner product of the resulting equation with $\frac{\pa^{\al}\pa^\be \widetilde{\FG}}{\FM_*}$  over ${\R}\times{\R}^3$, one has
\begin{equation}\label{mixd.ip}
\begin{split}
\frac{1}{2}\frac{d}{dt}&\int_{{\R}\times{\R}^3}\frac{\left|\pa^{\al}\pa^\be \widetilde{\FG}\right|^2}{\FM_{*}}dxd\xi
-{\displaystyle\int_{{\R}\times{\R}^3}}\frac{\pa^{\al}\pa^\be \widetilde{\FG} L_{\FM}\pa^{\al}\pa^\be \widetilde{\FG}}{\FM_{*}}dxd\xi \\
=&{\sum\limits_{|\al'|+|\be'|\leq |\al|+|\be|-1\atop{\al'\leq\al,\be'\leq\be}}C^{\al,\be}_{\al',\be'}}\left(Q\left(\pa^{\al-\al'}\pa^{\be-\be'}\FM,\pa^{\al'}\pa^{\be'}\widetilde{\FG}\right)
+Q\left(\pa^{\al'}\pa^{\be'}\widetilde{\FG},\pa^{\al-\al'}\pa^{\be-\be'}\FM\right),
\frac{\pa^{\al}\pa^\be\widetilde{\FG}}{\FM_{*}}\right)\\
&-\left(\pa^{\al}\pa^\be\left(\frac{3}{2\ta}
{\FP^{\FM}_1}\left[\xi_1\FM\left(\xi_1\pa_x\widetilde{u}_1
+\frac{|\xi-u|^2}{2\ta}\pa_x\widetilde{\ta}\right)\right]\right),\frac{\pa^{\al}\pa^\be\widetilde{\FG}}{\FM_{*}}\right)
\\&
-\left(\pa^{\al}\pa^\be\left(\FP_1^{\FM}\left(\xi_1\pa_x\FG\right)\right),\frac{\pa^{\al}\pa^\be\widetilde{\FG}}{\FM_{*}}\right)
+\left(\pa^{\al}\pa^\be\left(\pa_x\phi\pa_{\xi_1}\FG\right),\frac{\pa^{\al}\pa^\be\widetilde{\FG}}{\FM_{*}}\right)
\\
&+\left(\pa^{\al}\pa^\be Q(\FG,\FG),\frac{\pa^{\al}\pa^\be\widetilde{\FG}}{\FM_{*}}\right)
-\left(\pa_t\pa^{\al}\pa^\be\overline{\FG},\frac{\pa^{\al}\pa^\be\widetilde{\FG}}{\FM_{*}}\right).
\end{split}
\end{equation}
Similar to those calculations in the above Step 2, we have 
\begin{equation}\label{mi.diss4}
\begin{split}
\frac{d}{dt}&\sum\limits_{|\al|+|\be|\leq 2\atop{|\be|\geq1}}C_{\al\be}\int_{{\R}\times{\R}^3}
\frac{\left|\pa^{\al}\pa^\be \widetilde{\FG}\right|^2}{\FM_{*}}dxd\xi
+\la \sum_{|\al|+|\be|\leq 2\atop{|\be|\geq1}}\int_{{\R}\times{\R}^3}\frac{(1+|\xi|)
\left|\pa^{\al}\pa^\be\widetilde{\FG}\right|^2}{\FM_{*}}dxd\xi \\
\lesssim&\sum\limits_{1\leq|\al|\leq2}\int_{{\R}\times{\R}^3}\frac{(1+|\xi|)\left|\pa^{\al}\FG\right|^2}{\FM_{*}}dxd\xi
+\int_{{\R}\times{\R}^3}\frac{(1+|\xi|)\left|\widetilde{\FG}\right|^2}{\FM_{*}}dxd\xi
\\&+ \sum\limits_{1\leq|\al|\leq2}
\left\|\pa^{\al}\left[\widetilde{\rho},\widetilde{u},\widetilde{\ta},\widetilde{\phi}\right]\right\|^2+\eps(1+t)^{-2},
\end{split}
\end{equation}
for suitable constants $C_{\al\be}>0$. The proof of \eqref{mi.diss4} above is given in the appendix.
Consequently, it follows from \eqref{mi.diss2}, \eqref{mi.diss3} and \eqref{mi.diss4} that
\begin{equation}\label{mi.diss5}
\begin{split}
K_0&\ka_7\frac{d}{dt}\CE_1(\widetilde{\rho},\widetilde{u},\widetilde{\theta},\widetilde{\phi})
-K_0\ka_7\ka_0\frac{d}{dt}\sum\limits_{|\al|=1}\left(\pa^{\al}\widetilde{u}_1,\pa^{\al}\pa_x\widetilde{\rho}\right)
\\
&+K_0\frac{d}{dt}\sum\limits_{1\leq|\al|\leq2}\left\{\int_{{\R}\times{\R}^3}\frac{R\theta\left|\pa^{\al} F\right|^2}{\FM}dxd\xi
+\CE_2(\widetilde{\phi})
\right\}
+\ka_{9}\frac{d}{dt}\int_{{\R}\times{\R}^3}\frac{\left|\widetilde{\FG}\right|^2}{{\FM_{*}}}dxd\xi
\\&
+\ka_{8}\frac{d}{dt}\sum\limits_{1\leq|\al|\leq2}\int_{{\R}\times{\R}^3}\frac{\left|\pa^{\al} F\right|^2}{\FM_*}dxd\xi
+\ka_{10}\frac{d}{dt}\sum\limits_{|\al|+|\be|\leq 2\atop{|\be|\geq1}}
\int_{{\R}\times{\R}^3}\frac{\left|\pa^{\al}\pa^\be \widetilde{\FG}\right|^2}{\FM_{*}}dxd\xi
\\&+\la\sum\limits_{1\leq|\al|\leq2}\int_{\R\times{\R}^3}\frac{(1+|\xi|)\left|\pa^{\al} \FG\right|^2}{\FM_*}dxd\xi
+\la{\displaystyle
\int_{{\R}\times{\R}^3}}\frac{(1+|\xi|)\left|\widetilde{\FG}\right|^2}{\FM_{*}}dxd\xi
\\&+\la \sum\limits_{1\leq|\al|\leq2}\int_{{\R}\times{\R}^3}\frac{(1+|\xi|)\left|\pa^{\al}\FG\right|^2}{\FM}dxd\xi
+\la \sum_{|\al|+|\be|\leq 2\atop{|\be|\geq1}}
\int_{{\R}\times{\R}^3}\frac{(1+|\xi|)\left|\pa^{\al}\pa^\be\widetilde{\FG}\right|^2}{\FM_{*}}dxd\xi
\\&+\la\int_{\R}\left|\left[\widetilde{\rho},\widetilde{u}_1,\widetilde{S}\right]\right|^2\pa_xu^r_{1}dx
+\la\sum\limits_{1\leq|\al|\leq 2}\left\|\pa^{\al}\left[\widetilde{\rho},\widetilde{u},\widetilde{\theta}\right]\right\|^2
+\la\sum\limits_{1\leq|\al|\leq2}\left\|\pa^{\al}\widetilde{\phi}\right\|_{H^1}^2\\
\lesssim&(1+t)^{-\zeta_1}\left\|\left[\widetilde{\rho},\widetilde{u},\widetilde{\ta},\widetilde{\phi}\right]\right\|^2
+\eps^{\sigma_0}(1+t)^{-\zeta_0},
\end{split}
\end{equation}
where $K_0$ is a positive large constant and $\ka_{10}$ is also a positive constant but suitably small. Then \eqref{g.eng.} follows from
\eqref{mi.diss5} and Gronwall's inequality. This ends the proof of Proposition \ref{g.eng.lem.}.
\end{proof}

\section{Local existence}
In this section,  we show the existence of the local-in-time solution in the function space $\widetilde{\CE}([0,T])$
for a small $T>0$ to the Cauchy problem \eqref{VPB}, \eqref{BE.Idata1} and \eqref{con.phi}. We adopt the iteration method as in \cite{G}
for the proof, which is based on a uniform energy estimate for the following sequence of iterating approximate solutions:
\begin{eqnarray}\label{iterate.F}
\begin{split}
\left\{\begin{array}{rll}
\begin{split}
&\left\{\partial_t+\xi_1\pa_x -\pa_x\phi^{n}\pa_{\xi_1}\right\}F^{n+1}+F^{n+1}(\xi)\int_{\R^3\times \S^2}|(\xi-\xi_\ast)\cdot\omega|
F^n(\xi_\ast)\,d\xi_\ast d\omega\\[2mm]
&\qquad\qquad\qquad
=\int_{\R^3\times \S^2}|(\xi-\xi_\ast)\cdot\omega|
F^n(\xi'_{\ast})F^n(\xi')\,d\xi_\ast d\omega,\\[2mm]
&-\pa_x^2\phi^{n}=\int_{\R^3}F^n(\xi)\,d\xi-\rho_e(\phi^n),\\[2mm]
&F^{n+1}(0,x,\xi)=F_0(x,\xi),\ n\geq0,\\[2mm]
&F^0(t,x,\xi)=F_0(x,\xi).
\end{split}
\end{array}\right.
\end{split}
\end{eqnarray}
Set $\FM_r=\FM_{[\rho^r(t,x),u^r(t,x),\theta^r(t,x)]}(\xi)$. Let
$$
F^{n}=g^n+\FM_r,\quad
\widetilde{\phi}^{\,n}=\phi^n-\phi^r,\ n\geq0.
$$
Then \eqref{iterate.F} is equivalent to
\begin{eqnarray}\label{iterate.g}
\begin{split}
\left\{\begin{array}{rll}
\begin{split}
&\left\{\partial_t+\xi_1\pa_x -\pa_x\widetilde{\phi}^{\,n}\pa_{\xi_1}\right\}g^{n+1}+\nu_{\FM_r}(\xi)g^{n+1}-Kg^n
=Q_{\textrm{gain}}(g^n,g^n)-Q_{\textrm{loss}}(g^n,g^{n+1})\\[2mm]
&\qquad\qquad-\left\{\pa_t+\xi_1\pa_x-\pa_x\widetilde{\phi}^{\,n}\pa_{\xi_1}\right\}\FM_r
+\pa_x\phi^r\pa_{\xi_1}g^{n+1}+\pa_x\phi^r\pa_{\xi_1}\FM_r,\\[2mm]
&-\pa_x^2\widetilde{\phi}^{\,n}=\int_{\R^3}g^n(\xi)\,d\xi+\rho_e(\phi^r)-\rho_e(\phi^n)+\pa_x^2\phi^r,\\[2mm]
&g^{n+1}(0,x,\xi)=F_0(x,\xi)-\FM_{[\rho^r(0,x),u^r(0,x),\ta^r(0,x)]},\ g^0=F_0(x,\xi)-\FM_{[\rho^r(0,x),u^r(0,x),\ta^r(0,x)]},
\end{split}
\end{array}\right.
\end{split}
\end{eqnarray}
where $\nu_{\FM_r}(\xi)$ is a multiplier, given by
$$
\nu_{\FM_r}(\xi)=\int_{\R^3\times \S_+^2}
|(\xi-\xi_\ast)\cdot\omega|\FM_r\,d\xi_\ast d\omega,
$$
and
$K(\xi,\xi_\ast)$ is a self-adjoint $L^2$ compact operator, defined by
$$
K g^n=Q_{\textrm{gain}}(\FM_r,g^n)-Q_{\textrm{loss}}(g^n,\FM_r)+Q_{\textrm{gain}}(g^n,\FM_r).
$$
As in \cite{YZ3}, $K(\xi,\xi_\ast)$ can be also presented as
\begin{eqnarray*}
\begin{split}
\left\{\begin{array}{rll}
\begin{split}
Kh=&\sqrt{{\bf M}_r(\xi)}
K_{{\bf M}_r}\left(\left(\frac{h}{\sqrt{{\bf M}_r}}\right)(\xi)\right),\
K_{{\bf M}_r}=K_{2{\bf M}_r}-K_{1{\bf M}_r},\\
K_{1{\bf M}_r}h=&\int_{\R^3\times \S_+^2}
|(\xi-\xi_\ast)\cdot\omega|\sqrt{\FM_r(\xi)}\sqrt{\FM_r(\xi_\ast)}h(\xi_\ast)\,d\xi_\ast d\omega,\\
K_{2{\bf M}_r}h=&\int_{\R^3\times \S_+^2}
|(\xi-\xi_\ast)\cdot\omega|\sqrt{\FM_r(\xi_\ast)}\left\{\sqrt{\FM_r(\xi')}h(\xi'_\ast)+\sqrt{\FM_r(\xi_\ast')}h(\xi')\right\}\,d\xi_\ast d\omega.
\end{split}
\end{array}\right.
\end{split}
\end{eqnarray*}
In what follows, we begin with the uniform bound in $n$ for $\|g^{n}\|_{\widetilde{\CE}_T}$ for a small time $T>0$.

\begin{lemma}\label{gn.bdd.lem}
The solution sequence $\{g^{n}\}_{n=1}^\infty$ is well defined. For  a sufficiently small constant $\eps_0>0$,
there exists $T^{\ast}=T^{\ast}(\eps_0)>0$ such that if
$$
\sum\limits_{|\al|+|\beta|\leq 2}\int_{{\R}\times{\R}^3}\frac{\left|\partial^\al \partial^\beta g^0(x,\xi)\right|^2}{{\bf M}_*}dxd\xi  +\eps\leq{\eps^2_0},
$$
then for any $n$, it holds that
\begin{equation}\label{gn.bdd}
\widetilde{Y}_T (g^n):=\widetilde{\CE}_T (g^{n})+\widetilde{\mathcal {D}}_T (g^{n})\leq 2\eps^2_0, \ \ \forall\,T\in[0,T^*),
\end{equation}
where $\widetilde{\mathcal
{D}}_T (h)$ is defined by
\begin{eqnarray*}
\widetilde{\mathcal
{D}}_T (h)=\sum\limits_{|\al|+|\beta|\leq 2}\int_0^{T}
\int_{{\R}\times{\R}^3}\frac{(1+|\xi|)\left|\partial^\al\partial^\beta h(t,x,\xi)\right|^2}{{\bf M}_*}dxd\xi .
\end{eqnarray*}
\end{lemma}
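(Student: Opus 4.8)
The plan is to prove Lemma \ref{gn.bdd.lem} by induction on $n$, establishing the uniform bound \eqref{gn.bdd} for a suitably small time interval $[0,T^*)$, together with the positivity and well-definedness of each $g^{n+1}$ (equivalently $F^{n+1}\geq 0$) along the way. The base case $n=0$ is immediate since $g^0$ is the fixed function $F_0-\FM_{[\rho^r,u^r,\ta^r](0,\cdot)}$ and, by the assumption on the data, $\widetilde{Y}_T(g^0)\leq \widetilde{\CE}_T(g^0)+\widetilde{\mathcal D}_T(g^0)\to \widetilde{\CE}_0(g^0)\le \eps_0^2$ as $T\to 0$, so choosing $T^*$ small gives $\widetilde{Y}_T(g^0)\leq 2\eps_0^2$. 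For the inductive step, assuming $\widetilde{Y}_T(g^n)\leq 2\eps_0^2$ for all $T\in[0,T^*)$, I would first note that the linear problem \eqref{iterate.g} for $g^{n+1}$ has coefficients $\nu_{\FM_r}(\xi)\sim(1+|\xi|)$, $K$ a bounded operator, and $\pa_x\widetilde\phi^{\,n}$ determined from $g^n$ through the Poisson equation and standard elliptic estimates (giving $\|\pa_x\widetilde\phi^{\,n}\|_{L^\infty_x}+\|\pa_x\widetilde\phi^{\,n}\|_{H^2_x}\lesssim\eps_0$); the transport operator $\pa_t+\xi_1\pa_x-\pa_x\phi^n\pa_{\xi_1}$ together with the damping $\nu_{\FM_r}$ yields a well-posed linear kinetic equation whose solution lies in $\widetilde{\CE}([0,T])$ by the classical theory (characteristics plus Duhamel, exactly as in \cite{G}).

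The heart of the argument is the uniform energy estimate. For each pair $(\al,\be)$ with $|\al|+|\be|\leq 2$, I would apply $\pa^\al\pa^\be$ to the first equation of \eqref{iterate.g} and take the $L^2_{x,\xi}\big(\tfrac1{\sqrt{\FM_*}}\big)$ inner product with $\pa^\al\pa^\be g^{n+1}$. The damping term produces the good dissipative contribution $\int\nu_{\FM_r}|\pa^\al\pa^\be g^{n+1}|^2/\FM_*\gtrsim \int(1+|\xi|)|\pa^\al\pa^\be g^{n+1}|^2/\FM_*$, after absorbing the lower-order commutators coming from $\pa^\al\pa^\be(\nu_{\FM_r}g^{n+1})$ which are controlled by $\eps_0$ times the dissipation plus $C$ times lower-order energy. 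The operator $K$ is handled by its compactness/boundedness, giving a contribution bounded by $\eta$ times the dissipation plus $C_\eta\widetilde{\CE}_T(g^{n+1})$. The collision gain and loss terms $Q_{\textrm{gain}}(g^n,g^n)$, $Q_{\textrm{loss}}(g^n,g^{n+1})$ are estimated via the nonlinear estimate (Lemma \ref{est.nonop} type bounds) and Sobolev embedding in $x$, yielding $C\eps_0$ times dissipation plus $C\eps_0\widetilde{\mathcal D}_T(g^n)$; the inhomogeneous term $-\{\pa_t+\xi_1\pa_x-\pa_x\widetilde\phi^{\,n}\pa_{\xi_1}\}\FM_r$ together with $\pa_x\phi^r\pa_{\xi_1}\FM_r$ is smooth and rapidly decaying in $\xi$, contributing at worst $C\eps(1+t)^{-2}\leq C\eps$ after time integration; and the force terms $\pa_x\widetilde\phi^{\,n}\pa_{\xi_1}g^{n+1}$ and $\pa_x\phi^r\pa_{\xi_1}g^{n+1}$, after integration by parts in $\xi$ (moving $\pa_{\xi_1}$ onto $g^{n+1}/\FM_*$ and picking up the linear-in-$\xi$ factor from $\pa_{\xi_1}\FM_*$), are bounded by $C(\|\pa_x\widetilde\phi^{\,n}\|_{L^\infty}+\|\pa_x\phi^r\|_{L^\infty})$ times the dissipation, hence by $C\eps_0$ times the dissipation — here one uses that derivatives $\pa^\al\pa^\be$ falling on $\pa_x\widetilde\phi^{\,n}$ are controlled in $H^2_x$ by $\eps_0$ via the Poisson equation and the inductive bound on $g^n$. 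Summing over all $|\al|+|\be|\leq 2$ and choosing $\eps_0,\eta$ small, one obtains
\begin{equation*}
\frac{d}{dt}\widetilde{\CE}_t(g^{n+1})+\la\sum_{|\al|+|\be|\leq 2}\int_{\R\times\R^3}\frac{(1+|\xi|)|\pa^\al\pa^\be g^{n+1}|^2}{\FM_*}\,dxd\xi\lesssim \widetilde{\CE}_t(g^{n+1})+\eps_0\,\widetilde{\mathcal D}_t(g^n)+C\eps.
\end{equation*}
Integrating on $[0,T]$ gives $\widetilde{Y}_T(g^{n+1})\leq C\widetilde{\CE}_0(g^0)+CT\sup_{[0,T]}\widetilde{\CE}_t(g^{n+1})+C\eps_0\widetilde{\mathcal D}_T(g^n)+CT\eps$, and with $\widetilde{\mathcal D}_T(g^n)\leq 2\eps_0^2$, $\widetilde{\CE}_0(g^0)+\eps\leq\eps_0^2$, choosing first $T^*$ small enough that $CT^*<\tfrac12$ and then $\eps_0$ small enough that $2C\eps_0^3+CT^*\eps_0^2\leq\tfrac12\eps_0^2$ closes the induction at level $2\eps_0^2$.

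Finally, the positivity $F^{n+1}\geq 0$ (needed for the scheme to make sense and to pass to the limit) follows by the standard argument: writing the first equation of \eqref{iterate.F} as $\{\pa_t+\xi_1\pa_x-\pa_x\phi^n\pa_{\xi_1}\}F^{n+1}+F^{n+1}\,R[F^n]=Q_{\textrm{gain}}(F^n,F^n)$ with $R[F^n]=\int|(\xi-\xi_*)\cdot\omega|F^n\,d\xi_*d\omega\geq 0$, and integrating along characteristics, $F^{n+1}$ is the sum of a positive multiple of $F_0\geq 0$ and a time integral of $Q_{\textrm{gain}}(F^n,F^n)\geq 0$ provided $F^n\geq 0$, which holds inductively starting from $F^0=F_0\geq 0$; the required $L^\infty$-type control of $F^n$ to justify these manipulations comes from the uniform $\widetilde{\CE}_T$ bound and Sobolev embedding $H^2_x\hookrightarrow L^\infty_x$. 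The main obstacle I anticipate is the careful treatment of the force terms $\pa_x\widetilde\phi^{\,n}\pa_{\xi_1}g^{n+1}$ and the commutators $\pa^\al\pa^\be(\pa_x\phi^r\pa_{\xi_1}\FM_r)$: one must verify that all $x$-derivatives landing on the potential (and on $\FM_r$) are absorbed either into the small factor $\eps_0$ via the elliptic regularity for $\widetilde\phi^{\,n}$ and the inductive bound on $g^n$, or into the rapidly time-decaying rarefaction profile estimates of Lemma \ref{cl.Re.Re2.}, so that the iteration constant genuinely stays below $2\eps_0^2$ on a time interval whose length is independent of $n$. The contraction estimate needed to actually produce a limit solution (not requested in the lemma but implicit in the section) follows along identical lines applied to $g^{n+1}-g^n$.
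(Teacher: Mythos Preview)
Your approach is essentially the same as the paper's: induction on $n$, applying $\pa^\al\pa^\be$ to \eqref{iterate.g}, taking the $L^2_{x,\xi}(1/\sqrt{\FM_*})$ inner product with $\pa^\al\pa^\be g^{n+1}$, and closing via smallness of $T^*$ and $\eps_0$. The paper's estimate \eqref{gn.eng.} is exactly the integrated version of your differential inequality.

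One slip to correct: the operator $K$ in \eqref{iterate.g} acts on $g^n$, not on $g^{n+1}$, so its contribution after Cauchy--Schwarz is $\eta\,(\text{dissipation of }g^{n+1})+C_\eta$ times a norm of $g^n$, not $C_\eta\widetilde{\CE}_T(g^{n+1})$. Concretely, the paper writes $\pa^\al\pa^\be(Kg^n)$ as a sum of $Q_{\mathrm{gain}}$ and $Q_{\mathrm{loss}}$ terms in $\FM_r$ and $g^n$ and bounds them via Lemma~\ref{est.nonop}; after time integration this produces the $C(\eps_0^2+T)\widetilde{Y}_T(g^n)$ term in \eqref{gn.eng.}. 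Your final inequality is therefore missing a linear-in-$g^n$ contribution, but since $\widetilde{Y}_T(g^n)\leq 2\eps_0^2$ by induction, this is harmless for the closure. Also, the inhomogeneous source $\{\pa_t+\xi_1\pa_x\}\FM_r$ does not decay like $(1+t)^{-2}$; its $L^2_x$ norm squared is only $O(\eps)$ uniformly (or $O(t^{-1})$), which is why the paper picks up $C\eps\ln(e+T)$ rather than $C\eps$ after integration---again inconsequential for small $T^*$. The positivity argument you sketch is correct and is what the paper records in Lemma~\ref{local.existence}.
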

\begin{proof}
We intend to prove \eqref{gn.bdd} by induction on $n$. Namely, for each integer $l\geq0$, we are going to verify:
\begin{equation}\label{yl.bdd}
\widetilde{Y}_T(g^l)\leq 2\eps^2_0,
\end{equation}
for $0\leq T<T^\ast$, where $T^\ast>0$ will be suitably chosen later on.
Clearly the case $l=0$ holds. We assume (\ref{yl.bdd}) is true for $l=n$. Let $|\al|+|\be|\leq2$,
take the inner product of $\pa^\al\pa^\be \eqref{iterate.g}_1$ with $\frac{\pa^\al\pa^\be g^{n+1}}{\FM_\ast}$ over $\R\times\R^3$, to obtain
\begin{equation}\label{gn.inp}
\begin{split}
\frac{1}{2}&\frac{d}{dt}\left\|\frac{\pa^\al\pa^\be g^{n+1}}{\sqrt{\FM_\ast}}\right\|^2+\left(\nu_{\FM_r}\pa^\al\pa^\be g^{n+1},\frac{\pa^\al\pa^\be g^{n+1}}{\FM_\ast}\right)
\\&=-{\sum\limits_{|\al'|+|\be'|\leq |\al|+|\be|-1\atop{\al'\leq\al,\be'\leq\be}}C^{\al,\be}_{\al',\be'}}\left(\pa^{\al'}\pa^{\be'}\nu_{\FM_r}\pa^{\al-\al'}\pa^{\be-\be'}g^{n+1},\frac{\pa^\al\pa^\be g^{n+1}}{\FM_\ast}\right)\\
&-\sum\limits_{|\be'|=1}C^{\be}_{\be'}\left(\pa^{\be'}\xi_1\pa^\al\pa^{\be-\be'} g^{n+1},\frac{\pa^\al\pa^\be g^{n+1}}{\FM_\ast}\right)
+{\sum\limits_{0<\al'\leq \al}C^{\al}_{\al'}}\left(\pa^{\al'}\pa_x\widetilde{\phi}^{\,n}\pa^{\al-\al'}\pa^{\be} g^{n+1},\frac{\pa^\al\pa^\be g^{n+1}}{\FM_\ast}\right)
\\&+{\sum\limits_{\al'\leq\al,\be'\leq\be}C^{\al,\be}_{\al',\be'}}\left(Q_{\textrm{gain}}\left(\pa^{\al'}\pa^{\be'}\FM_r,\pa^{\al-\al'}\pa^{\be-\be'} g^{n}\right)-Q_{\textrm{loss}}\left(\pa^{\al-\al'}\pa^{\be-\be'} g^{n},\pa^{\al'}\pa^{\be'}\FM_r\right),\frac{\pa^\al\pa^\be g^{n+1}}{\FM_\ast}\right)
\\&+{\sum\limits_{\al'\leq\al,\be'\leq\be}C^{\al,\be}_{\al',\be'}}\left(Q_{\textrm{gain}}\left(\pa^{\al-\al'}\pa^{\be-\be'} g^{n},\pa^{\al'}\pa^{\be'}\FM_r\right),\frac{\pa^\al\pa^\be g^{n+1}}{\FM_\ast}\right)
\\&+\left(\pa^\al\pa^\be Q_{\textrm{gain}} (g^{n},g^{n}),\frac{\pa^\al\pa^\be g^{n+1}}{\FM_\ast}\right)-
\left(\pa^\al\pa^\be Q_{\textrm{loss}} (g^{n},g^{n+1}),\frac{\pa^\al\pa^\be g^{n+1}}{\FM_\ast}\right)
\\&-\left(\pa^\al\pa^\be \left(\left\{\pa_t+\xi_1\pa_x-\pa_x\widetilde{\phi}^{\,n}\pa_{\xi_1}\right\}\FM_r\right),\frac{\pa^\al\pa^\be g^{n+1}}{\FM_\ast}\right)
\\&+\left(\pa^\al\pa^\be \left(\pa_x\phi^r\pa_{\xi_1}g^{n+1}\right),\frac{\pa^\al\pa^\be g^{n+1}}{\FM_\ast}\right)
+\left(\pa^\al\pa^\be \left(\pa_x\phi^r\pa_{\xi_1}\FM_r\right),\frac{\pa^\al\pa^\be g^{n+1}}{\FM_\ast}\right).
\end{split}
\end{equation}
Now by integrating \eqref{gn.inp} with respect to the time variable over $[0,t]$ with $0\leq t\leq T$,
and performing the similar calculations as the proof of \eqref{2d.F.sum2},
one has
\begin{equation}\label{gn.eng.}
\begin{split}
&\widetilde{\mathcal {E}}_T (g^{n+1})+\la\widetilde{\mathcal {D}}_T (g^{n+1})\\
&\leq \sum\limits_{|\al|+|\beta|\leq 2}\int_{{\R}\times{\R}^3}\frac{\left|\partial_x\partial^\beta g^0(x,\xi)\right|^2}{{\bf M}_*}dxd\xi
+C(\eps^2_0+T)\widetilde{Y}_T (g^{n})
+C\sum\limits_{|\al|\leq2}\int_0^T\left\|\pa^{\al}\pa_x\widetilde{\phi}\right\|^2dt
+C\eps\ln ({e}+T).
\end{split}
\end{equation}
On the other hand, from $\eqref{iterate.g}_2$, it follows that
\begin{equation}\label{phi.n}
\begin{split}
\sum\limits_{|\al|\leq2}\left\|\pa^{\al}\pa_x\widetilde{\phi}^{\,n}\right\|^2\leq C\widetilde{\mathcal {E}}_T (g^{n})+C\eps(1+t)^{-2}.
\end{split}
\end{equation}
Consequently, \eqref{gn.eng.} and \eqref{phi.n} yield
\begin{equation*}
\begin{split}
\widetilde{\mathcal {E}}_T (g^{n+1})+\la\widetilde{\mathcal {D}}_T (g^{n+1})
\leq \sum\limits_{|\al|+|\beta|\leq 2}\int_{{\R}\times{\R}^3}\frac{\left|\partial^\al\partial^\beta g^0(x,\xi)\right|^2}{{\bf M}_*}dxd\xi
+C(\eps^2_0+T)\widetilde{Y}_T (g^{n})
+C\eps\ln ({e}+T).
\end{split}
\end{equation*}
This then implies \eqref{yl.bdd} for $l=n+1$, since $\eps>0$ can be  small enough and both $T^*>0$ and $\eps_0>0$ can be chosen to be  suitably small. The proof of Lemma \ref{gn.bdd.lem} is therefore complete.
\end{proof}

With the uniform bound on  the iterative solution sequence in terms of \eqref{iterate.g} by Lemma \ref{gn.bdd.lem}, we
can give the proof of  the local existence of solutions in the following lemma. We remark that the approach used here is due  to
Guo \cite{G}.

\begin{lemma}\label{local.existence}
For a sufficiently small $\eps_0>0$, there exists $T^{\ast}=T^{\ast}(\eps_0)>0$ such that if
$$
\sum\limits_{|\al|+|\beta|\leq 2}\int_{{\R}\times{\R}^3}\frac{\left|\partial^\al\partial^\beta g^0(x,\xi)\right|^2}{{\bf M}_*}dxd\xi  +\eps\leq{\eps^2_0},
$$
then there is a unique strong solution $F(t,x,\xi)$ to the VPB system (\ref{VPB})
in $(0,T^{\ast})\times {\R}\times {\R}^3$ with initial data $F(0,x,\xi)=F_0(x,\xi)$, such that
\begin{equation}\notag
\widetilde{Y}_T (F-\FM_r)\leq 2\eps^2_0,
\end{equation}
for any $T\in [0,T^\ast)$,
where $\widetilde{Y}_T (\cdot)$ is defined in \eqref{gn.bdd}.
Moreover if $
F_0(x,\xi)\geq0$, then
$F(t,x,\xi)\geq 0$ holds true for $T\in [0,T^\ast)$.
\end{lemma}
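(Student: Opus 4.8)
The plan is to pass to the limit in the linear iteration \eqref{iterate.F}--\eqref{iterate.g}, for which the uniform-in-$n$ bound $\widetilde{Y}_T(g^n)\le 2\eps_0^2$ has just been established in Lemma \ref{gn.bdd.lem}, and then to verify uniqueness and positivity. \textbf{Step 1 (Cauchy property).} Keeping $T^\ast>0$ and $\eps_0>0$ as in Lemma \ref{gn.bdd.lem}, I set $G^{n}=g^{n+1}-g^n$ and $\Psi^n=\widetilde{\phi}^{\,n}-\widetilde{\phi}^{\,n-1}$, subtract the kinetic equation in \eqref{iterate.g} at consecutive levels, and estimate $G^n$ in the zeroth-order part of $\widetilde{Y}_T$, namely $\sup_{0\le t\le T}\big\|G^n(t)/\sqrt{\FM_\ast}\big\|_{L^2_{x,\xi}}^2$ together with its $(1+|\xi|)$-weighted time integral. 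The equation for $G^n$ is transport with the coercive multiplier $\nu_{\FM_r}$, source $KG^{n-1}$, coupling terms $\pa_x\phi^{n-1}\pa_{\xi_1}G^n$ and $\pa_x\Psi^n\,\pa_{\xi_1}g^{n+1}$, and the bilinear differences $Q_{\textrm{gain}}(G^{n-1},g^n)+Q_{\textrm{gain}}(g^{n-1},G^{n-1})$ and $Q_{\textrm{loss}}(G^{n-1},g^{n+1})+Q_{\textrm{loss}}(g^{n-1},G^n)$. Testing with $G^n/\FM_\ast$: the transport term contributes nothing since $\FM_\ast$ is $x$-independent; $\nu_{\FM_r}$ yields the $(1+|\xi|)$-weighted dissipation; $\pa_x\phi^{n-1}\pa_{\xi_1}G^n$ is integrated by parts in $\xi_1$ and bounded by $\|\pa_x\phi^{n-1}\|_{L^\infty}\lesssim\eps_0$ times the dissipation; $\pa_x\Psi^n\,\pa_{\xi_1}g^{n+1}$ and the bilinear differences are handled by Lemma \ref{est.nonop}, the uniform bound on $g^{n},g^{n\pm1}$, and---for the $\Psi^n$ term---the elliptic estimate of Step 2; after integration in time the remaining $K$- and $Q$-contributions carry a factor $T$. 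Since $G^n(0)=0$, a Gronwall argument gives $\|G^n\|_{\mathrm{Cauchy}}\le C(\eps_0^2+T)\,\|G^{n-1}\|_{\mathrm{Cauchy}}$, so for $\eps_0,T$ small the map $g^n\mapsto g^{n+1}$ is a contraction and $\{g^n\}$ (hence $\{\widetilde{\phi}^{\,n}\}$ via Step 2) converges strongly in this low-order norm.

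\textbf{Step 2 (the potential).} From the Poisson equation in \eqref{iterate.g}, the difference $\Psi^n$ solves $-\pa_x^2\Psi^n+\big(\rho_e(\phi^n)-\rho_e(\phi^{n-1})\big)=\int_{\R^3}(g^n-g^{n-1})\,d\xi$. Writing $\rho_e(\phi^n)-\rho_e(\phi^{n-1})=\rho_e'(\phi^\sharp)\Psi^n$, where $\rho_e'(\phi^\sharp)$ is positive and bounded below on the (compact) range of $\phi^n,\phi^{n-1}$ by assumption $(\CA_2)$, and testing against $\Psi^n$ and $\pa_x^k\Psi^n$ ($k\le2$), one gets $\|\Psi^n\|_{H^2}^2\lesssim\|g^n-g^{n-1}\|_{L_x^2(L^2_\xi(1/\sqrt{\FM_\ast}))}^2$, which is exactly the control needed in Step 1. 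Running the same computation on $\widetilde{\phi}^{\,n}$ itself, together with $\widetilde{Y}_T(g^n)\le2\eps_0^2$, reproduces the a priori bound \eqref{phi.n}, so that $\pa_x\phi^n=\pa_x\widetilde{\phi}^{\,n}+\pa_x\phi^r$ is uniformly bounded and Lipschitz in $x$ with $\|\pa_x\phi^n\|_{L^\infty}\lesssim\eps_0$; this fact is used both in Step 1 and in the positivity argument below.

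\textbf{Step 3 (passage to the limit and uniqueness).} The strong low-order convergence of Step 1, combined with the uniform bound $\widetilde{Y}_T(g^n)\le2\eps_0^2$ and weak-$\ast$ compactness, yields a limit $g$ with $\widetilde{Y}_T(g)\le2\eps_0^2$ by lower semicontinuity; interpolating the strong convergence against the uniform second-order bound upgrades it enough to pass to the limit in every term of \eqref{iterate.g}. Hence $F=g+\FM_r$ is a strong solution of the VPB system \eqref{VPB} on $(0,T^\ast)\times\R\times\R^3$ with $F(0,x,\xi)=F_0(x,\xi)$, belonging to $\widetilde{\CE}([0,T^\ast))$ with $\widetilde{Y}_T(F-\FM_r)\le2\eps_0^2$. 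Uniqueness follows from the estimate of Step 1 applied to the difference of two solutions with the same data: that difference satisfies a homogeneous Gronwall inequality with constant $C(\eps_0^2+T)<1$, hence is zero.

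\textbf{Step 4 (positivity and the main obstacle).} Finally, assume $F_0\ge0$; I prove $F^n\ge0$ by induction on $n$, following Guo \cite{G}. Since $\pa_x\phi^n$ is bounded and Lipschitz in $x$ by Step 2, the characteristics $(X^n(s),\Xi^n(s))$ determined by $\dot X=\Xi_1$, $\dot\Xi_1=-\pa_x\phi^n$, $\dot\Xi_2=\dot\Xi_3=0$ are well defined and $C^1$, and integrating the kinetic equation in \eqref{iterate.F} along them gives
\begin{equation*}
F^{n+1}(t,x,\xi)=e^{-\int_0^t\nu^n\,d\tau}\,F_0\big(X^n(0),\Xi^n(0)\big)+\int_0^t e^{-\int_s^t\nu^n\,d\tau}\,Q_{\textrm{gain}}(F^n,F^n)\big(s,X^n(s),\Xi^n(s)\big)\,ds,
\end{equation*}
where $\nu^n\ge0$ is the loss frequency evaluated along the trajectory. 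Because $Q_{\textrm{gain}}\ge0$ whenever $F^n\ge0$ and $F^0=F_0\ge0$, induction gives $F^{n+1}\ge0$ for all $n$; passing to the a.e.\ limit along a further subsequence furnished by Step 1 preserves the sign, so $F\ge0$ on $[0,T^\ast)$. The step I expect to be the main obstacle is Step 1: although the transport term causes no derivative loss at zeroth order, one must arrange the integration by parts in $\xi_1$ for the force terms and the time-integration of the $K$- and collision-difference terms so that every contribution to the contraction constant is genuinely small---either $O(\eps_0)$ from the a priori bounds and from $\|\pa_x\phi^n\|_{L^\infty}$, or $O(T)$ from the Duhamel-type time integrals---which is the delicate bookkeeping point.
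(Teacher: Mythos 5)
Your proposal is correct and follows essentially the same route as the paper, which at this point only sketches the argument (uniform bounds from Lemma \ref{gn.bdd.lem}, passage to the limit, then uniqueness and positivity "as in \cite{G}"); your Steps 1--4 simply supply the standard details — the low-order contraction estimate for the iteration differences together with the elliptic bound on $\Psi^n$, interpolation against the uniform second-order bound to pass to the limit, uniqueness via the same difference estimate, and positivity by Duhamel along the characteristics of $\partial_x\phi^n$ using the sign of $Q_{\textrm{gain}}$. No gaps.
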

\begin{proof}
Recalling \eqref{gn.bdd} and \eqref{iterate.F}, the limit function $g(t,x,\xi)+\FM_r$ of the approximate solution sequence $\{g^n+\FM_r\}_{n=1}^\infty$ must be the solution to \eqref{VPB} with $F(0,x,\xi)=F_0(x,\xi)$ in the sense of distribution. The distribution solution turns out to be a strong solution because
it  can be shown to be unique as in \cite{G}. We omit the details for the proof of the uniqueness. The proof of the positivity of the solution is also quite standard, cf. \cite{G} for instance.
This ends the proof of Lemma \ref{local.existence}.
\end{proof}

\section{Global existence and large time behavior}

We are now in a position to complete the 

\begin{proof}[Proof of Theorem \ref{main.Res.}]
From the energy estimates obtained in Proposition \ref{g.eng.lem.}, one sees that
\begin{equation}\label{ap.es}
\begin{split}
\sup\limits_{0\leq t\leq T}&\sum\limits_{|\al|+|\be|\leq2}\left\|\pa^{\al}\pa^\be\left(F(t,x,\xi)-\FM_{[\rho^r,u^r,\ta^r](t,x)}(\xi)\right)
\right\|_{L_x^2\left(L^2_\xi\left(\frac{1}{\sqrt{\FM_*(\xi)}}\right)\right)}^2
\\ \lesssim&\sup\limits_{0\leq t\leq T}\sum\limits_{|\al|\leq2}
\left\|\pa^{\al}\left[\widetilde{\rho}, \widetilde{u}, \widetilde{\ta}\right](t)\right\|^2+
\sup\limits_{0\leq t\leq T} \sum\limits_{1\leq|\al|\leq
2} {\displaystyle\int_{\R\times\R^3}}\frac{\left|\partial^{\al} F(t,x,\xi)\right|^2}{{\bf M}_*}dxd\xi
\\&+\sup\limits_{0\leq t\leq T} {\displaystyle\int_{\R\times\R^3}}\frac{\left|\widetilde{\FG}(t,x,\xi)\right|^2}{{\bf M}_*}dxd\xi
+\sup\limits_{0\leq t\leq T} \sum\limits_{|\al|+|\be|\leq
2\atop{|\be|\geq1}} {\displaystyle\int_{\R\times\R^3}}\frac{\left|\pa^{\al}\partial^\be\widetilde{\FG}(t,x,\xi)\right|^2}{{\bf M}_*}dxd\xi
+\eps
\\ \leq& C_0N^2(0)+C_0\eps^{\si_0}.
\end{split}
\end{equation}
Notice that $\eps>0$ is a parameter independent of $\eps_0$. By letting $\eps>0$ be small enough, the global existence of the solution of the Cauchy problem \eqref{VPB}, \eqref{BE.Idata1}, \eqref{VPB.b1}, \eqref{con.phi}
 then follows from the standard continuation argument based on the local existence obtained in Lemma \ref{local.existence} and the a priori
estimate \eqref{ap.es}. In addition, \eqref{ap.es} implies \eqref{VPB.sol.}. It now remains to prove the large time behavior as \eqref{sol.Lab}.
For this, we start with the justification of the following two limits:
\begin{equation}\label{latm1}
\lim\limits_{t\rightarrow+\infty}\left\|\frac{\pa_x\left(F(t,x,\xi)-\FM_{[\rho^r,u^r,\ta^r](t,x)}(\xi)\right)}{\sqrt{\FM_\ast}}
\right\|_{L^2_{x,\xi}}^2= 0,
\end{equation}
and
\begin{equation}\label{latm2}
\lim\limits_{t\rightarrow+\infty}\left\|\pa_x\widetilde{\phi}(t)\right\|^2
= 0.
\end{equation}
Indeed, by the global existence, utilizing \eqref{g.eng.} and Lemma \ref{cl.Re.Re2.}, one can show that
\begin{equation}\label{latm3}
\begin{split}
\int_{0}^{+\infty}&\left|\frac{d}{dt}\left\|\frac{\pa_x\left(F(t,x,\xi)-\FM_{[\rho^r,u^r,\ta^r](t,x)}(\xi)\right)}{\sqrt{\FM_\ast}}
\right\|_{L^2_{x,\xi}}^2\right|dt
\\=&2\int_{0}^{+\infty}\left|\left(\FM_\ast^{-1}\pa_t\pa_x\left(F(t,x,\xi)-\FM_{[\rho^r,u^r,\ta^r](t,x)}(\xi)\right),
\pa_x\left(F(t,x,\xi)-\FM_{[\rho^r,u^r,\ta^r](t,x)}(\xi)\right)\right)\right|dt
\\ \leq &\int_{0}^{+\infty}\left\|\FM_\ast^{-1/2}\pa_t\pa_x\left(F(t,x,\xi)-\FM_{[\rho^r,u^r,\ta^r](t,x)}(\xi)\right)\right\|_{L^2_{x,\xi}}^2dt
\\&+\int_{0}^{+\infty}\left\|\FM_\ast^{-1/2}\pa_x\left(F(t,x,\xi)-\FM_{[\rho^r,u^r,\ta^r](t,x)}(\xi)\right)\right\|_{L^2_{x,\xi}}^2dt
\\ \leq &C\sum\limits_{1\leq|\al|\leq 2}\int_{0}^{+\infty}\left\|\pa^{\al}\left[\widetilde{\rho},\widetilde{u},\widetilde{\theta}\right]\right\|^2dt
+C\sum\limits_{1\leq|\al|\leq 2}\int_{0}^{+\infty}\left\|\frac{\pa^{\al}\FG}{\sqrt{\FM_\ast}}\right\|_{L^2_{x,\xi}}^2dt
<+\infty,
\end{split}
\end{equation}
and
\begin{equation}\label{latm4}
\int_{0}^{+\infty}\left|\frac{d}{dt}\left\|\pa_x\widetilde{\phi}\right\|^2\right|dt
=\frac{1}{2}\int_{0}^{+\infty}\left|\left(\pa_t\pa_x\widetilde{\phi},\pa_x\widetilde{\phi}\right)\right|dt<+\infty.
\end{equation}
Thus \eqref{latm3} and \eqref{latm4} and  give \eqref{latm1} and \eqref{latm2}.
With \eqref{latm1} and \eqref{latm2} in hand, we now get from Sobolev's inequality \eqref{sob.ine.} and \eqref{ap.es} that
\begin{equation}\label{latm5}
\begin{split}
\sup\limits_{x\in\R}&\left\|\frac{F(t,x,\xi)-\FM_{[\rho^r,u^r,\ta^r](t,x)}(\xi)}{\sqrt{\FM_\ast}}
\right\|_{L^2_{\xi}}^2\\
\leq &\left\|\frac{ \sup\limits_{x\in\R}|F(t,x,\xi)-\FM_{[\rho^r,u^r,\ta^r](t,x)}(\xi)|}{\sqrt{\FM_\ast}}
\right\|_{L^2_{\xi}}^2\\
\leq &\left\|\frac{ \sqrt{2}\|F(t,x,\xi)-\FM_{[\rho^r,u^r,\ta^r](t,x)}(\xi)\|_{L^2_x}^{1/2}\|\pa_x (F(t,x,\xi)-\FM_{[\rho^r,u^r,\ta^r](t,x)}(\xi))\|_{L^2_x}^{1/2}}{\sqrt{\FM_\ast}}
\right\|_{L^2_{\xi}}^2\\
\leq & 2 \left\|\frac{F(t,x,\xi)-\FM_{[\rho^r,u^r,\ta^r](t,x)}(\xi)}{\sqrt{\FM_\ast}}
\right\|_{L^2_{x,\xi}} \left\|\frac{\pa_x (F(t,x,\xi)-\FM_{[\rho^r,u^r,\ta^r](t,x)}(\xi))}{\sqrt{\FM_\ast}}
\right\|_{L^2_{x,\xi}}
\\ \leq& C\eps_0\left\|\frac{\pa_x\left(F(t,x,\xi)-\FM_{[\rho^r,u^r,\ta^r](t,x)}(\xi)\right)}{\sqrt{\FM_\ast}}
\right\|_{L^2_{x,\xi}}\rightarrow 0,
\end{split}
\end{equation}
as $t\to +\infty$. 
Similarly,
\begin{equation}\label{latm6}
\begin{split}
\sup\limits_{x\in\R}\left|\widetilde{\phi}\right|\leq \sqrt{2}\left\|\pa_x\widetilde{\phi}\right\|^{1/2}\left\|\widetilde{\phi}\right\|^{1/2}\leq C\eps_0 \left\|\pa_x\widetilde{\phi}\right\|^{1/2}\rightarrow 0\ \  \textrm{as}\ t\rightarrow+\infty.
\end{split}
\end{equation}
Then \eqref{sol.Lab} follows from \eqref{latm5}, \eqref{latm6}
and $(iii)$ in Lemma \ref{cl.Re.Re2.}.
This completes the proof of Theorem \ref{main.Res.}.
\end{proof}

\section{Appendix}

In this appendix, we first list some basic inequalities used in the paper. The following two lemmas,  borrowed from \cite{GPS}, are concerned with estimates on the nonlinear
and linearized  collision operators  $Q(\cdot, \cdot)$ and $L_{\bf M}{\bf
G}$, respectively.

\begin{lemma}\label{est.nonop}
There exists a positive constant $C>0$ such
that
\begin{equation}\label{est.nonop.ine.}
{\displaystyle\int_{{\R}^3}}\frac{(1+|\xi|)^{-1}
Q(f,g)^2}{\widehat{\bf M}}d\xi
 \leq C\Bigg\{{\displaystyle\int_{{\R}^3}}
\frac{(1+|\xi|)f^2}{\widehat{\bf M}} d\xi\cdot
{\displaystyle\int_{{\R}^3}}\frac{g^2}{\widehat{\bf M}} d\xi
+{\displaystyle\int_{{\R}^3}}\frac{f^2}{\widehat{\bf M}}
d\xi\cdot{\displaystyle\int_{{\R}^3}}\frac{(1+|\xi|)g^2}{\widehat{\bf M}} d\xi\Bigg\},
\end{equation}
where $\widehat{\bf M}$ is any Maxwellian such that the above
integrals are well defined.
\end{lemma}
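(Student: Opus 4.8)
The plan is to reduce everything to the rescaled functions $\widehat f=f/\sqrt{\widehat{\bf M}}$ and $\widehat g=g/\sqrt{\widehat{\bf M}}$, split $Q(f,g)=Q_{\textrm{gain}}(f,g)-Q_{\textrm{loss}}(f,g)$, and estimate the two parts separately. In both cases the two structural facts that will be used are, first, that after integrating out $\omega$ the hard-sphere kernel gives $\int_{\S_+^2}|(\xi-\xi_*)\cdot\omega|\,d\omega=c_0|\xi-\xi_*|$, and second, that the collision frequency $\nu(\xi):=\int_{\R^3\times\S_+^2}|(\xi-\xi_*)\cdot\omega|\,\widehat{\bf M}(\xi_*)\,d\xi_*\,d\omega$ is comparable to $1+|\xi|$; combined with the elementary bound $|\xi-\xi_*|\le(1+|\xi|)+(1+|\xi_*|)$ these will produce exactly the two product terms on the right-hand side of \eqref{est.nonop.ine.}.

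For the loss term I would write $Q_{\textrm{loss}}(f,g)(\xi)/\sqrt{\widehat{\bf M}(\xi)}=\widehat g(\xi)\int|(\xi-\xi_*)\cdot\omega|\sqrt{\widehat{\bf M}(\xi_*)}\,\widehat f(\xi_*)\,d\xi_*\,d\omega$ and apply the Cauchy--Schwarz inequality in $(\xi_*,\omega)$ against the measure $|(\xi-\xi_*)\cdot\omega|\,d\xi_*\,d\omega$: one factor is precisely $\nu(\xi)\lesssim 1+|\xi|$, while the other is $c_0\int|\xi-\xi_*|\,\widehat f(\xi_*)^2\,d\xi_*$. Multiplying by $(1+|\xi|)^{-1}$ cancels the $\nu(\xi)$ contribution, and integrating in $\xi$ and splitting $|\xi-\xi_*|\le(1+|\xi|)+(1+|\xi_*|)$ yields $\int\frac{(1+|\xi|)g^2}{\widehat{\bf M}}d\xi\int\frac{f^2}{\widehat{\bf M}}d\xi+\int\frac{g^2}{\widehat{\bf M}}d\xi\int\frac{(1+|\xi|)f^2}{\widehat{\bf M}}d\xi$, which is bounded by the right-hand side.

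For the gain term I would first use the hard-sphere identity $\widehat{\bf M}(\xi)\widehat{\bf M}(\xi_*)=\widehat{\bf M}(\xi')\widehat{\bf M}(\xi_*')$ (a consequence of the energy conservation in \eqref{re.ppc}) to recast $Q_{\textrm{gain}}(f,g)(\xi)/\sqrt{\widehat{\bf M}(\xi)}=\int|(\xi-\xi_*)\cdot\omega|\,\widehat f(\xi_*')\,\widehat g(\xi')\,\sqrt{\widehat{\bf M}(\xi_*)}\,d\xi_*\,d\omega$. Applying Cauchy--Schwarz in $(\xi_*,\omega)$ again peels off the factor $\nu(\xi)\lesssim 1+|\xi|$, leaving $\int|(\xi-\xi_*)\cdot\omega|\,\widehat f(\xi_*')^2\,\widehat g(\xi')^2\,d\xi_*\,d\omega$; after cancelling the weight by $(1+|\xi|)^{-1}$ and integrating in $\xi$, I would invoke the classical involutive pre/post-collisional change of variables $(\xi,\xi_*,\omega)\mapsto(\xi',\xi_*',\omega)$, under which the measure $|(\xi-\xi_*)\cdot\omega|\,d\xi\,d\xi_*\,d\omega$ is invariant, to replace the integrand by $|(\xi-\xi_*)\cdot\omega|\,\widehat f(\xi_*)^2\,\widehat g(\xi)^2$. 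Integrating in $\omega$ and using $|\xi-\xi_*|\le(1+|\xi|)+(1+|\xi_*|)$ once more gives the two desired product terms.

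The only genuinely delicate point is the bookkeeping of powers of $(1+|\xi|)$, so that the single negative power on the left precisely absorbs the $\nu(\xi)\sim 1+|\xi|$ generated at each Cauchy--Schwarz step, together with verifying the lower/upper bound $\nu(\xi)\sim 1+|\xi|$ for the hard-sphere cross-section; the pre/post-collisional change of variables in the gain term is standard (Carleman-type representation), and since the precise inequality is quoted from \cite{GPS} I would only sketch it. Note that no smallness of $\widehat{\bf M}$, of the wave strength, or of the perturbation enters this estimate.
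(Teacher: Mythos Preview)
Your sketch is correct and follows the standard argument: rescale by $\sqrt{\widehat{\bf M}}$, split into gain and loss, apply Cauchy--Schwarz against the measure $|(\xi-\xi_*)\cdot\omega|\,d\xi_*\,d\omega$ so that one factor becomes the collision frequency $\nu(\xi)\sim 1+|\xi|$ (cancelled by the weight on the left), and for the gain part use the Maxwellian identity together with the involutive pre/post-collisional change of variables. The bookkeeping of the single factor of $1+|\xi|$ is exactly as you describe.

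As for comparison with the paper: there is nothing to compare. The paper does not prove this lemma at all; it simply records it in the appendix with the remark that it is ``borrowed from \cite{GPS}'' (Golse--Perthame--Sulem). Your sketch is essentially the argument one finds in that reference, so you have supplied what the paper deliberately omits.
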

\begin{remark}
In fact both of $Q_{\textrm{gain}}(f,g)$ and $Q_{\textrm{loss}}(f,g)$ enjoy the estimate \eqref{est.nonop.ine.}, and they will be used to compute \eqref{gn.inp}.
\end{remark}

To perform the energy estimates for the
Boltzmann equation, 
${\bf P}_1^{\widehat{{\bf M}}}F$, the
microscopic projection of its solution $F(t,x,\xi)$ with respect
to a given Maxwellian $\widehat{{\bf M}}$, the dissipative effect through
the microscopic $H$-theorem should be used. In short, the
microscopic $H$-theorem states that the linearized collision
operator $L_{\widehat{\FM}}$ around a fixed global Maxwellian state $\widehat{\FM}$ is negative definite on the non-fluid element ${\bf
P}_1^{\widehat{\FM}}F$, \cite{Car}, i.e., the coercivity property
$$
 -\int_{{\R}^3}\frac{{{\bf P}_1^{\widehat{\FM}}F}L_{\widehat{\FM}}\left({\bf
P}_1^{\widehat{\FM}}F\right)}{\widehat{\FM}}d\xi \geq \de\int_{{\R}^3}\frac{(1+|\xi|)\left|{\bf P}_1^{\widehat{\FM}}F\right|^2}{\widehat{\FM}}d\xi
$$
holds true for some positive constant $\de>0$.
Furthermore, one can vary the background for linearization and the weight function.
That is, we also have the following result whose proof is based on Lemma
\ref{est.nonop},  cf. \cite{LYYZ}.

\begin{lemma}\label{co.est.} If $\frac \theta 2<\widehat{\theta}$, then there exist two positive constants
$\de=\de(\rho, u,\theta;\widehat{\rho},\widehat{u},\widehat{\theta})$
and $\eta_0=\eta_0(\rho, u,\theta;\widehat{\rho},\widehat{u},\widehat{\theta})$ such that
if $|\rho-\widehat{\rho}|+|u-\widehat{u}|+|\theta-\widehat{\theta}|<\eta_0$, we have for
$h(\xi)\in {\mathcal{N}}^\bot$,
\begin{eqnarray*}
-\int_{{\R}^3}\frac{ h L_{\bf M} h}{\widehat{\bf M}}d\xi \geq
\de\int_{{\R}^3}\frac{(1+|\xi|)h^2}{\widehat{\bf
M}}d\xi.
\end{eqnarray*}
Here ${\bf M}\equiv {\bf M}_{[\rho,u,\theta]}(\xi)$, $ \widehat{\bf
M}= {\bf M}_{[\widehat{\rho},\widehat{u},\widehat{\theta}]}(\xi)$
and
$$
{\mathcal{N}}^\bot=\left\{ f(\xi):\ \ \int_{{\R}^3}\psi_i(\xi)f(\xi)d\xi=0,\ i=0,1,2,3,4\right\}.
$$
\end{lemma}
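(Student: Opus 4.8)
}
The plan is to derive the claimed coercivity by a perturbation argument from the classical microscopic $H$-theorem stated in the excerpt, namely the coercivity of $L_{\widehat{\FM}}$ around the fixed global Maxwellian $\widehat{\FM}$. First I would recall that $L_{\FM}h = Q(h,\FM) + Q(\FM,h)$ and write $L_{\FM} = L_{\widehat{\FM}} + (L_{\FM} - L_{\widehat{\FM}})$, where the difference operator is $(L_{\FM} - L_{\widehat{\FM}})h = Q(h,\FM-\widehat{\FM}) + Q(\FM-\widehat{\FM},h)$. Since the known $H$-theorem gives
$$
-\int_{{\R}^3}\frac{h\,L_{\widehat{\FM}}h}{\widehat{\FM}}\,d\xi \geq \de_0\int_{{\R}^3}\frac{(1+|\xi|)h^2}{\widehat{\FM}}\,d\xi
$$
for $h\in\mathcal{N}^\bot$ and some $\de_0>0$ depending only on $[\widehat{\rho},\widehat{u},\widehat{\theta}]$, it suffices to control the perturbative term
$$
\left|\int_{{\R}^3}\frac{h\,(L_{\FM}-L_{\widehat{\FM}})h}{\widehat{\FM}}\,d\xi\right|
$$
by $\tfrac{\de_0}{2}\int_{{\R}^3}\frac{(1+|\xi|)h^2}{\widehat{\FM}}\,d\xi$ when $|\rho-\widehat{\rho}|+|u-\widehat{u}|+|\theta-\widehat{\theta}|$ is small.

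For that estimate I would apply the Cauchy--Schwarz inequality in $\xi$ followed by Lemma \ref{est.nonop} applied to $Q(h,\FM-\widehat{\FM})$ and $Q(\FM-\widehat{\FM},h)$ with weight $\widehat{\FM}$. This produces a bound of the form
$$
\left|\int_{{\R}^3}\frac{h\,(L_{\FM}-L_{\widehat{\FM}})h}{\widehat{\FM}}\,d\xi\right|
\lesssim \left(\int_{{\R}^3}\frac{(1+|\xi|)h^2}{\widehat{\FM}}\,d\xi\right)^{1/2}
\left(\int_{{\R}^3}\frac{(1+|\xi|)^{-1}\big(Q(h,\FM-\widehat{\FM})+Q(\FM-\widehat{\FM},h)\big)^2}{\widehat{\FM}}\,d\xi\right)^{1/2},
$$
and Lemma \ref{est.nonop} bounds the second factor by a constant times $\big(\int \tfrac{(1+|\xi|)h^2}{\widehat{\FM}}\big)^{1/2}\big(\int\tfrac{(1+|\xi|)(\FM-\widehat{\FM})^2}{\widehat{\FM}}\big)^{1/2}$. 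Thus the perturbative term is controlled by
$$
C\left(\int_{{\R}^3}\frac{(1+|\xi|)h^2}{\widehat{\FM}}\,d\xi\right)\left(\int_{{\R}^3}\frac{(1+|\xi|)(\FM-\widehat{\FM})^2}{\widehat{\FM}}\,d\xi\right)^{1/2}.
$$
The remaining point is that under the assumption $\tfrac{\theta}{2}<\widehat{\theta}$, the Gaussian $\FM_{[\rho,u,\theta]}$ is dominated by a constant multiple of $\widehat{\FM}^{1/2}$ (or any fixed fractional power strictly between $0$ and $1$), so the integral $\int_{{\R}^3}\tfrac{(1+|\xi|)(\FM-\widehat{\FM})^2}{\widehat{\FM}}\,d\xi$ is finite and, by continuity of $[\rho,u,\theta]\mapsto\FM_{[\rho,u,\theta]}$ in this weighted norm together with $\FM_{[\widehat{\rho},\widehat{u},\widehat{\theta}]}=\widehat{\FM}$, it tends to zero as $[\rho,u,\theta]\to[\widehat{\rho},\widehat{u},\widehat{\theta}]$. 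Hence there is $\eta_0=\eta_0(\rho,u,\theta;\widehat{\rho},\widehat{u},\widehat{\theta})>0$ such that whenever $|\rho-\widehat{\rho}|+|u-\widehat{u}|+|\theta-\widehat{\theta}|<\eta_0$ this factor is at most $\de_0/(2C)$, and combining the two bounds gives the lemma with $\de=\de_0/2$.

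The main obstacle is the uniform pointwise (Gaussian) comparison between $\FM_{[\rho,u,\theta]}$ and $\widehat{\FM}$ needed to make the weighted integrals finite and to get the smallness as the parameters converge; this is exactly where the hypothesis $\tfrac{\theta}{2}<\widehat{\theta}$ enters, guaranteeing that the exponent of $\FM$ decays faster than half that of $\widehat{\FM}$ so that $\FM^2/\widehat{\FM}$ remains integrable against polynomial weights. Once that comparison is in place, everything else is a routine application of Lemma \ref{est.nonop} and Cauchy--Schwarz. I would also remark that the constants $\de$ and $\eta_0$ genuinely depend on all six parameters, but since in the application $[\widehat{\rho},\widehat{u},\widehat{\theta}]=[\rho_*,u_*,\theta_*]$ is fixed and $[\rho,u,\theta]$ stays in the compact neighborhood described by \eqref{global.M2.}, one may pass to uniform constants over that neighborhood.
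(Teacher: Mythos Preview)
Your proposal is correct and follows exactly the approach the paper indicates: the paper does not spell out a proof but states that the result ``is based on Lemma \ref{est.nonop}'' and cites \cite{LYYZ}, and your argument---splitting $L_{\FM}=L_{\widehat{\FM}}+(L_{\FM}-L_{\widehat{\FM}})$, invoking the classical coercivity for $L_{\widehat{\FM}}$, and absorbing the difference via Cauchy--Schwarz together with Lemma \ref{est.nonop} and the Gaussian comparison enabled by $\theta/2<\widehat{\theta}$---is precisely that route.
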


\begin{remark}
The constant $\eta_0$ in Lemma \ref{co.est.} is some positive constant depending on
the first non-zero eigenvalue of the linearized operator $L_{\bf
M}$. Note that $\eta_0$ is not necessarily small, cf.
\cite{LYYZ}.
\end{remark}

A direct consequence of Lemma \ref{co.est.} and the Cauchy inequality
is the following corollary, cf. \cite{LYYZ}.

\begin{corollary}\label{inv.L.} Under the assumptions in Lemma \ref{co.est.}, we have
for  $h(\xi)\in {\mathcal{N}}^\bot$,
\begin{eqnarray*}
{\displaystyle\int_{{\R}^3}}\frac{(1+|\xi|)}{\widehat{\bf
M}}\left|L^{-1}_{\bf M}h\right|^2d\xi\leq
\de^{-2}{\displaystyle\int_{{\R}^3}}\frac{(1+|\xi|)^{-1}h^2(\xi)}{\widehat{\bf M}}d\xi.
\end{eqnarray*}
\end{corollary}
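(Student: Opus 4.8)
The plan is to deduce the corollary directly from the coercivity estimate of Lemma~\ref{co.est.} together with a weighted Cauchy--Schwarz inequality, which is exactly the route flagged in the statement. First I would write $g=L^{-1}_{\FM}h$. Recall that $L_{\FM}$ is a Fredholm operator on the weighted $L^2_\xi$ space whose kernel is the five-dimensional fluid subspace $\mathcal{N}$, and that it restricts to a linear isomorphism of $\mathcal{N}^\bot$ onto itself; hence $L^{-1}_{\FM}h$ is by definition the unique element $g\in\mathcal{N}^\bot$ with $L_{\FM}g=h$, and in particular $g\in\mathcal{N}^\bot$. This membership is the only structural fact used beyond Lemma~\ref{co.est.}.

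Next I would apply Lemma~\ref{co.est.} with its $h$ replaced by $g$, which is legitimate since $g\in\mathcal{N}^\bot$; using $L_{\FM}g=h$ this gives
\begin{equation*}
\de\int_{\R^3}\frac{(1+|\xi|)g^2}{\widehat{\FM}}\,d\xi
\leq-\int_{\R^3}\frac{g\,L_{\FM}g}{\widehat{\FM}}\,d\xi
=-\int_{\R^3}\frac{g\,h}{\widehat{\FM}}\,d\xi .
\end{equation*}
In particular the right-hand side is nonnegative. Splitting the weight as $1=(1+|\xi|)^{1/2}(1+|\xi|)^{-1/2}$ and applying the Cauchy--Schwarz inequality in the inner product $\langle\cdot,\cdot\rangle_{\widehat{\FM}}$ yields
\begin{equation*}
\left|\int_{\R^3}\frac{g\,h}{\widehat{\FM}}\,d\xi\right|
\leq\left(\int_{\R^3}\frac{(1+|\xi|)g^2}{\widehat{\FM}}\,d\xi\right)^{1/2}
\left(\int_{\R^3}\frac{(1+|\xi|)^{-1}h^2}{\widehat{\FM}}\,d\xi\right)^{1/2}.
\end{equation*}

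Finally I would combine the two displays. If $\int_{\R^3}\widehat{\FM}^{-1}(1+|\xi|)g^2\,d\xi=0$ then $g=L^{-1}_{\FM}h=0$ and the asserted inequality is trivial; otherwise I divide both sides of the chained estimate by $\bigl(\int_{\R^3}\widehat{\FM}^{-1}(1+|\xi|)g^2\,d\xi\bigr)^{1/2}$ and square, obtaining
\begin{equation*}
\de^2\int_{\R^3}\frac{(1+|\xi|)\,\bigl|L^{-1}_{\FM}h\bigr|^2}{\widehat{\FM}}\,d\xi
\leq\int_{\R^3}\frac{(1+|\xi|)^{-1}h^2}{\widehat{\FM}}\,d\xi ,
\end{equation*}
which is the claimed bound after multiplying through by $\de^{-2}$. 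There is no substantive obstacle in this argument; the only points deserving a line of care are the identification $g\in\mathcal{N}^\bot$ so that Lemma~\ref{co.est.} applies to $g$, and the trivial reduction to the case $g\neq0$ before dividing.
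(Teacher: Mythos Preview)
Your proof is correct and follows exactly the route indicated in the paper: apply the coercivity estimate of Lemma~\ref{co.est.} to $g=L_{\FM}^{-1}h\in\mathcal{N}^\bot$, then use the weighted Cauchy--Schwarz inequality to absorb the weighted norm of $g$ on the right. The paper gives no further detail beyond citing Lemma~\ref{co.est.} and the Cauchy inequality, so your write-up is in fact more explicit than the original.
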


In the rest part of the appendix, we undertake to give the detailed proofs of \eqref{Bur.fun.}, \eqref{ID.VPB2}, \eqref{CJ8.c}, \eqref{2d.F.sum2} 
and \eqref{mi.diss4} {one by one}.

\begin{proof}[Proof of \eqref{Bur.fun.}]
To verify \eqref{Bur.fun.}, it suffices to compute
\begin{eqnarray*}
\CH_l&=&\pa_x\int_{\R^3}\xi_l\xi_1 L^{-1}_{\FM}\left(\FP_1^{\FM}\left(\xi_1\pa_x\FM\right)\right)\,d\xi,\ l=1,2,3,\\
\CH_4&=&\frac{1}{2}\pa_x\int_{\R^3}|\xi|^2\xi_1 L^{-1}_{\FM}\left(\FP_1^{\FM}\left(\xi_1\pa_x\FM\right)\right)\,d\xi.
\end{eqnarray*}
For this, let us first consider $\CH_l$ $(l=1,2,3)$. By direct calculations, one has
\begin{equation*}
\begin{split}
\CH_l=&\pa_x\int_{\R^3}\xi_l\xi_1 L^{-1}_{\FM}\FP_1^{\FM}\left(\xi_1\FM\left[\sum\limits_{j=1}^3\frac{\xi_j-u_j}{R\ta}\pa_xu_j
+\frac{|\xi-u|^2}{2R\ta^2}\pa_x\ta\right]\right)\,d\xi\\
=&\pa_x\int_{\R^3}(\xi_l-u_l)(\xi_1-u_1) L^{-1}_{\FM}\FP_1^{\FM}\left((\xi_1-u_1)\FM\left[\sum\limits_{j=1}^3\frac{\xi_j-u_j}{R\ta}\pa_xu_j
+\frac{|\xi-u|^2}{2R\ta^2}\pa_x\ta\right]\right)\,d\xi
\\=&\pa_x\left(\int_{\R^3}(\xi_l-u_l)(\xi_1-u_1) L^{-1}_{\FM}\FP_1^{\FM}\left(\frac{(\xi_l-u_l)(\xi_1-u_1)}{R\ta}\FM\right)\,d\xi~\pa_xu_l\right)
\\=&\pa_x\left(\frac{1}{R\ta}\int_{\R^3}\xi_l\xi_1 L^{-1}_{\FM}\FP_1^{\FM}\left(\xi_l\xi_1\FM\right)\,d\xi~\pa_xu_l\right).
\end{split}
\end{equation*}
On the other hand, one can claim that
\begin{equation}\label{Lij}
\begin{split}
\int_{\R^3}\xi^2_i L^{-1}_{\FM}\FP_1^{\FM}\left(\xi^2_i\FM\right)\,d\xi
=3\int_{\R^3}\xi_i\xi_j L^{-1}_{\FM}\FP_1^{\FM}\left(\xi_i\xi_j\FM\right)\,d\xi, \ \textrm{for} \ i\neq j.
\end{split}
\end{equation}
To prove \eqref{Lij}, we get from the rotational invariance of $L_{\FM}^{-1}$ and integration that for $i\neq j$,
\begin{equation*}
\begin{split}
\int_{\R^3}\xi_i\xi_j L^{-1}_{\FM}\FP_1^{\FM}\left(\xi_i\xi_j\FM\right)\,d\xi
=&\int_{\R^3}(\xi_i-u_i)(\xi_j-u_j) L^{-1}_{\FM}\FP_1^{\FM}\left((\xi_i-u_i)(\xi_j-u_j)\FM\right)\,d\xi
\\=&\int_{\R^3}(\xi_i-u_i)^2 L^{-1}_{\FM}\FP_1^{\FM}\left((\xi_j-u_j)^2\FM\right)\,d\xi=\int_{\R^3}\xi_i^2 L^{-1}_{\FM}\FP_1^{\FM}\left(\xi_j^2\FM\right)\,d\xi,
\end{split}
\end{equation*}
and
\begin{equation}\label{Lij.p1}
\begin{split}
\int_{\R^3}\xi_i^2 L^{-1}_{\FM}\FP_1^{\FM}\left(\xi_i^2\FM\right)\,d\xi
=&\int_{\R^3}\left(\frac{\xi_i+\xi_j}{\sqrt{2}}\right)^2 L^{-1}_{\FM}\FP_1^{\FM}\left(\left(\frac{\xi_i+\xi_j}{\sqrt{2}}\right)^2\FM\right)\,d\xi
\\=&\int_{\R^3}\left(\frac{\xi^2_i+\xi^2_j}{2}+\xi_i\xi_j\right)
L^{-1}_{\FM}\FP_1^{\FM}\left(\left(\frac{\xi^2_i+\xi^2_j}{2}+\xi_i\xi_j\right)\FM\right)\,d\xi
\\=&\frac{1}{2}\int_{\R^3}\xi_i^2 L^{-1}_{\FM}\FP_1^{\FM}\left(\xi_i^2\FM\right)\,d\xi+\frac{3}{2}\int_{\R^3}\xi_i\xi_j L^{-1}_{\FM}\FP_1^{\FM}\left(\xi_i\xi_j\FM\right)\,d\xi.
\end{split}
\end{equation}
Then \eqref{Lij} follows from \eqref{Lij.p1}. We next define
$$
\mu(\ta)=-\frac{3}{2\theta}\int_{{\R}^3}\xi_1\xi_iL^{-1}_{\FM}
\left(\xi_1\xi_i\FM\right)>0,\
i=2, 3,
$$
which further equals to
$$
-\frac{3}{2\theta}\int_{{\R}^3}\xi_1\xi_iL^{-1}_{\FM_{[1,u,\ta]}}
\left(\xi_1\xi_i\FM_{[1,u,\ta]}\right)\,d\xi,\quad  i=2, 3,
$$
see \cite{Sone}, for instance.
Therefore the first formula in $\eqref{Bur.fun.}$ holds.

Similarly, for $\CH_4$, we have
\begin{equation*}
\begin{split}
\CH_4=&\frac{1}{2}\pa_x\int_{\R^3}|\xi|^2\xi_1 L^{-1}_{\FM}\FP_1^{\FM}\left(\xi_1\FM\left[\sum\limits_{j=1}^3\frac{\xi_j-u_j}{R\ta}\pa_xu_j
+\frac{|\xi-u|^2}{2R\ta^2}\pa_x\ta\right]\right)\,d\xi\\
=&\frac{1}{2}\pa_x\int_{\R^3}|\xi-u|^2(\xi_1-u_1) L^{-1}_{\FM}\FP_1^{\FM}\left((\xi_1-u_1)\FM\left[\sum\limits_{j=1}^3\frac{\xi_j-u_j}{R\ta}\pa_xu_j
+\frac{|\xi-u|^2}{2R\ta^2}\pa_x\ta\right]\right)\,d\xi
\\&+\pa_x\int_{\R^3}\xi\cdot u(\xi_1-u_1) L^{-1}_{\FM}\FP_1^{\FM}\left((\xi_1-u_1)\FM\left[\sum\limits_{j=1}^3\frac{\xi_j-u_j}{R\ta}\pa_xu_j
+\frac{|\xi-u|^2}{2R\ta^2}\pa_x\ta\right]\right)\,d\xi
\\
=&\frac{1}{2}\pa_x\int_{\R^3}|\xi-u|^2(\xi_1-u_1) L^{-1}_{\FM}\FP_1^{\FM}\left((\xi_1-u_1)\FM\left[\frac{|\xi-u|^2}{2R\ta^2}\pa_x\ta\right]\right)\,d\xi
\\&+\pa_x\int_{\R^3}\xi\cdot u(\xi_1-u_1) L^{-1}_{\FM}\FP_1^{\FM}\left((\xi_1-u_1)\FM\left[\sum\limits_{j=1}^3\frac{\xi_j-u_j}{R\ta}\pa_xu_j
\right]\right)\,d\xi
\\=&\frac{1}{4R}\pa_x\left(\frac{1}{\ta^2}\int_{\R^3}|\xi-u|^2\xi_1 L^{-1}_{\FM_{[1,u,\ta]}}\left(|\xi-u|^2\xi_1 \FM_{[1,u,\ta]}\right)\,d\xi~\pa_x\ta\right)
\\&+\sum\limits_{j=1}^3\pa_x\left(\frac{1}{R\ta}u_j\int_{\R^3}\xi_j\xi_1 L^{-1}_{\FM_{[1,u,\ta]}}\FP_1^{\FM}\left(\xi_j\xi_1\FM_{[1,u,\ta]}\right)\,d\xi~\pa_xu_j\right),
\end{split}
\end{equation*}
which can be reduced to
$-\pa_x\left(\kappa(\theta)\pa_x \theta\right)-3\pa_x\left(\mu(\theta)u_1\pa_x u_1\right)-
\sum\limits_{i=2}^3\pa_x\left(\mu(\theta)u_i\pa_x u_i\right),$
by defining
$$
\ka(\ta)=-\frac{3}{8\ta^2}\int_{{\R}^3}|\xi-u|^2\xi_iL^{-1}_{\FM_{[1,u,\ta]}}
\left(|\xi-u|^2\xi_i\FM_{[1,u,\ta]}\right)\,d\xi>0,\
i=1, 2, 3.
$$
Thus the second formula in $\eqref{Bur.fun.}$ follows, and this completes the proof of \eqref{Bur.fun.}.
\end{proof}

\begin{proof}[Proof of \eqref{ID.VPB2}]
For brevity we set $\FM_0=\FM_{[\rho_0(x),u_0(x),\ta_0(x)]}(\xi)$ and $\FM_{r0}=\FM_{[\rho^r_0(x),u^r_0(x),\ta^r_0(x)]}(\xi)$ with $[\rho^r_0,u^r_0,\ta^r_0](x)=[\rho^r,u^r,\ta^r](0,x)$.
First of all, we show that the first norm on the left hand side of \eqref{ID.VPB2} is bounded by $C\eps_0^2$ for a constant $C$. Notice that the macro-micro decomposition of $F_0-\FM_{r0}$ with respect to the global Maxwellian $\FM_\ast$:
\begin{equation*}
F_0-\FM_{r0}=\FP_0^{\FM_\ast} (\FM_0-\FM_{r0})+\FP_1^{\FM_\ast} (F_0-\FM_{r0}),
\end{equation*}
implies 
\begin{equation*}
\left\|\pa^\al \FP_0^{\FM_\ast} (\FM_0-\FM_{r0})\right\|_{L^2_\xi\left(\frac{1}{\sqrt{\FM_*(\xi)}}\right)}\leq \left\|\pa^\al (F_0-\FM_{r0})\right\|_{L^2_\xi\left(\frac{1}{\sqrt{\FM_*(\xi)}}\right)},
\end{equation*}
for each $\al$ with $|\al|\leq 2$. The further integration in $x$ and using \eqref{ID.VPB-E} lead to
\begin{equation*}
\left\|\pa^\al \FP_0^{\FM_\ast} (\FM_0-\FM_{r0})\right\|_{L_x^2\left(L^2_\xi\left(\frac{1}{\sqrt{\FM_*(\xi)}}\right)\right)}\leq \|\pa^\al (F_0-\FM_{r0})\|_{L_x^2\left(L^2_\xi\left(\frac{1}{\sqrt{\FM_*(\xi)}}\right)\right)}\leq C\eps_0.
\end{equation*}
On the other hand, from direct computations,
\begin{equation*}
\pa^\al \FP_0^{\FM_\ast} (\FM_0-\FM_{r0})=\sum_{j=0}^4\pa^\al \left\langle \FM_0-\FM_{r0},\chi^{\FM_\ast}_j\right\rangle_{{\FM_\ast}} \chi_j^{\FM_\ast},
\end{equation*}
with the inner product terms given by
\begin{equation*}
\left\langle \FM_0-\FM_{r0},\chi^{\FM_\ast}_0\right\rangle_{{\FM_\ast}} =\rho_0-\rho_0^r,
\end{equation*}
\begin{equation*}
\left\langle \FM_0-\FM_{r0},\chi^{\FM_\ast}_j\right\rangle_{{\FM_\ast}} =\frac{(\rho_0u_{0j}-\rho_0^r u_{0j}^r)-(\rho_0-\rho_0^r) u_{\ast j}}{\sqrt{\frac{2}{3} \rho_\ast \theta_\ast}},\quad j=1,2,3,
\end{equation*}
and
\begin{eqnarray*}
\left\langle \FM_0-\FM_{r0},\chi^{\FM_\ast}_4\right\rangle_{{\FM_\ast}} =\frac{3(\rho_0\theta_0-\rho_0^r\theta_0^r)}{\theta_\ast \sqrt{6\rho_\ast}} -\frac{3(\rho_0-\rho_0^r)}{\sqrt{6\rho_\ast}}
+\frac{3\rho_0|u_0-u_\ast|^2-3\rho_0^r|u_{0r}-u_\ast|^2}{2\theta_\ast \sqrt{6\rho_\ast}}.
\end{eqnarray*}
Since $\left\{\chi_j^{\FM_\ast},\ 0\leq j\leq 4\right\}$ is an orthonormal set of $L^2_\xi\left(\frac{1}{\sqrt{\FM_*(\xi)}}\right)$,
\begin{equation*}
\max_{0\leq j\leq 4} \left|\pa^\al \left\langle \FM_0-\FM_{r0},\chi^{\FM_\ast}_j\right\rangle_{{\FM_\ast}}\right|\leq \left\|\pa^\al \FP_0^{\FM_\ast} (\FM_0-\FM_{r0})\right\|_{L^2_\xi\left(\frac{1}{\sqrt{\FM_*(\xi)}}\right)}.
\end{equation*}
Therefore, by integrating it in $x$, taking the proper linear combination and using smallness of $\eps_0$, one has
\begin{equation}
\label{rja-p1}
\sum_{|\al|\leq 2} \|\pa^\al [\rho_0-\rho_0^r,u_0-u_0^r,\theta_0-\theta_0^r]\|\leq C\eps_0.
\end{equation}
Finally, the second norm on the left side of  \eqref{ID.VPB2} is bounded by $C\eps_0^2$ due to the mean-value property as well as \eqref{rja-p1} and smallness of $\eps_0$, and further the estimate on the third norm immediately follows by noticing $\FG_0=F_0-\FM_0=(F_0-\FM_{r0})-(\FM_0-\FM_{r0})$, and
\begin{equation*}
\sum\limits_{|\al|+|\be|\leq2}\left\|\pa^{\al}\pa^\be\left(\FM_0
-\FM_{r0}\right)\right\|^2_{L_x^2\left(L^2_\xi\left(\frac{1}{\sqrt{\FM_*(\xi)}}\right)\right)}\leq C\eps_0^2.
\end{equation*}
Then \eqref{ID.VPB2} is proved.
\end{proof}

\begin{proof}[Proof of \eqref{CJ8.c}]
Notice that
\begin{equation*}
\label{ }
\CJ_8=\left(\pa^\al \pa_x\phi,(\xi_1-u_1)\pa^\al \FM\right),\quad 1\leq |\al|\leq 2.
\end{equation*}
Let $\pa_i\in\{\pa_t,\pa_x\}$ and $\pa_j\in\{\pa_t,\pa_x\}$. By a simple calculation, 
\begin{eqnarray*}
\pa_i\FM =\frac{\pa_i \rho}{\rho}\FM +\frac{\xi-u}{R\theta}\cdot \pa_i u\FM +\left(\frac{|\xi-u|^2}{2R\theta}-{\frac{3}{2}}\right)\frac{\pa_i \theta}{\theta}\FM.
\end{eqnarray*}
Then, one can see that for $|\al|=1$, 
\begin{equation*}
\CJ_{8}=\int_{\R}\pa^{\al}\pa_x\phi \rho\pa^{\al} u_1dx.
\end{equation*}
Furthermore, for $|\al|=2$ with $\pa^\al=\pa_j\pa_i$, one can also obtain from direct calculations that 
\begin{equation*}
\CJ_{8}=\int_{\R}\pa^{\al}\pa_x\phi \rho\pa^{\al} u_1\,dx
+\underbrace{\int_{\R}\pa^{\al}\pa_x\phi (\pa_i\rho\pa_j u_1+\pa_j\rho\pa_i u_1)\,dx}_{\CJ_{8,0}}.
\end{equation*}
It is easy to see that
\begin{equation*}
|\CJ_{8,0}|\lesssim (\eps_0+\eta)\sum\limits_{|\al|=2}\left\|\pa^{\al}\pa_x\widetilde{\phi}\right\|^2
+(\eps_0+\eta)\sum\limits_{|\al|=1}\left\|\pa^{\al}\left[\widetilde{\rh},\widetilde{u}_1\right]\right\|^2+C_\eta\eps(1+t)^{-2}.
\end{equation*}
Therefore, to compute $\sum\limits_{1\leq|\al|\leq 2}\CJ_{8}$, it suffices to calculate
\begin{equation*}
\begin{split}
\sum\limits_{1\leq|\al|\leq 2}\int_{\R}\pa^{\al}\pa_x\phi \rho\pa^{\al} u_1dx
=&\underbrace{\sum\limits_{1\leq|\al|\leq 2}\int_{\R}\pa^{\al}\pa_x\widetilde{\phi} \rho\pa^{\al} \widetilde{u}_1dx}_{\CJ_{8,1}}
+\underbrace{\sum\limits_{1\leq|\al|\leq 2}\int_{\R}\pa^{\al}\pa_x\phi^r \rho\pa^{\al} \widetilde{u}_1dx}_{\CJ_{8,2}}\\&+\underbrace{\sum\limits_{1\leq|\al|\leq 2}\int_{\R}\pa^{\al}\pa_x\widetilde{\phi} \rho\pa^{\al} u^r_1dx}_{\CJ_{8,3}}+
\underbrace{\sum\limits_{1\leq|\al|\leq 2}\int_{\R}\pa^{\al}\pa_x\phi^r \rho\pa^{\al} u^r_1dx}_{\CJ_{8,4}}.
\end{split}
\end{equation*}
For this, we now turn to estimate $\CJ_{8,l}$ $(1\leq l\leq4)$ term by term. We first have by using \eqref{trho0} and integrating by parts that
\begin{equation}\label{CJ8p1}
\begin{split}
\CJ_{8,1}=&\sum\limits_{1\leq|\al|\leq2}\left(\pa^{\al}\widetilde{\phi},\pa^{\al}\pa_t\widetilde{\rho}\right)
+\sum\limits_{1\leq|\al|\leq2,\atop{0<\al'\leq\al}}C_{\al'}^{\al}\left(\pa^{\al}\widetilde{\phi},\pa_x\left(\pa^{\al'}\rho\pa^{\al-\al'}\widetilde{u}_1\right)\right)
\\&+\sum\limits_{1\leq|\al|\leq2,\atop{0\leq\al'\leq\al}}C_{\al'}^{\al}\left(\pa^{\al}\widetilde{\phi},\pa_x\pa^{\al'}\widetilde{\rho}\pa^{\al-\al'}u^r_1\right)
+\sum\limits_{1\leq|\al|\leq2,\atop{0\leq\al'\leq\al}}C_{\al'}^{\al}\left(\pa^{\al}\widetilde{\phi},\pa^{\al'}\widetilde{\rho}\pa_x\pa^{\al-\al'}u^r_1)\right)
\\=&\sum\limits_{1\leq|\al|\leq2}\left(\pa^{\al}\widetilde{\phi},\pa^{\al}\pa_t\widetilde{\rho}\right)
-\sum\limits_{1\leq|\al|\leq2,\atop{0<\al'\leq\al}}C_{\al'}^{\al}\left(\pa_x \pa^{\al}\widetilde{\phi},\pa^{\al'}\rho\pa^{\al-\al'}\widetilde{u}_1\right)
\\&-\sum\limits_{1\leq|\al|\leq2,\atop{0\leq\al'\leq\al}}C_{\al'}^{\al}\left(\pa_x\pa^{\al}\widetilde{\phi},\pa^{\al'}\widetilde{\rho}\pa^{\al-\al'}u^r_1\right)
+\sum\limits_{1\leq|\al|\leq2,\atop{0\leq\al'\leq\al}}C_{\al'}^{\al}\left(\pa^{\al}\widetilde{\phi},\pa^{\al'}\widetilde{\rho}\pa_x\pa^{\al-\al'}u^r_1)\right).
\end{split}
\end{equation}
In light of \eqref{trho0} and \eqref{taylor2}, the first term on the right hand side of \eqref{CJ8p1} further equals to
\begin{equation*}
\begin{split}
-&\sum\limits_{1\leq|\al|\leq2}\left(\pa^{\al}\widetilde{\phi},\pa^{\al}\pa_t\pa^2_x\widetilde{\phi}\right)
-\sum\limits_{1\leq|\al|\leq2}\left(\pa^{\al}\widetilde{\phi},\pa^{\al}\pa_t\left(\rho_{e}(\phi^r)-\rho_{e}(\phi)\right)\right)
-\sum\limits_{1\leq|\al|\leq2}\left(\pa^{\al}\widetilde{\phi},\pa^{\al}\pa_t\pa^2_x\phi^r\right)
\\=&\frac{1}{2}\frac{d}{dt}\sum\limits_{1\leq|\al|\leq2}\left\{\left\|\pa^{\al}\pa_x\widetilde{\phi}\right\|^2
+\left\|\sqrt{\rho_{e}'(\phi^r)}\pa^{\al}\widetilde{\phi}\right\|^2\right\}
\underbrace{-\sum\limits_{1\leq|\al|\leq2}\left(\pa^{\al}\widetilde{\phi},\pa^{\al}\pa_tJ_{10}\right)}_{\CK_1}
\\&+\underbrace{\sum\limits_{1\leq|\al|\leq2,\atop{0<\al'\leq\al}}C_{\al'}^{\al}
\left(\pa^{\al}\widetilde{\phi},\pa^{\al'}\rho_{e}'(\phi^r)\pa_t\pa^{\al-\al'}\widetilde{\phi}\right)
+\sum\limits_{1\leq|\al|\leq2,\atop{0\leq\al'\leq\al}}C_{\al'}^{\al}
\left(\pa^{\al}\widetilde{\phi},\pa^{\al'}\pa_t\rho_{e}'(\phi^r)\pa^{\al-\al'}\widetilde{\phi}\right)}_{\CK_2}
\\&\underbrace{-\frac{1}{2}\left(\pa^{\al}\widetilde{\phi},\pa_t\rho_{e}'(\phi^r)\pa^{\al}\widetilde{\phi}\right)
-\sum\limits_{1\leq|\al|\leq2}\left(\pa^{\al}\widetilde{\phi},\pa^{\al}\pa_t\pa^2_x\phi^r\right)}_{\CK_3}.
\end{split}
\end{equation*}
Here $\CK_1$, $\CK_2$ and $\CK_3$ will be estimated as follows.
From \eqref{xJ10}, it follows that
\begin{equation*}
\begin{split}
\CK_1=&-\sum\limits_{1\leq|\al|\leq2}\left(\pa^\al\left(\pa_t\phi\int_{\phi^r}^{\phi}\rho_{e}''(\varrho) d\varrho
+\widetilde{\phi}\pa_t\phi^r\int_{\phi^r}^{\phi}\rho_{e}''(\varrho) d\varrho\right),\pa^{\al}\widetilde{\phi}\right)
\\=&-\frac{1}{2}\frac{d}{dt}\sum\limits_{1\leq|\al|\leq2}\left(\pa^\al\widetilde{\phi}\int_{\phi^r}^{\phi}\rho_{e}''(\varrho) d\varrho
,\pa^{\al}\widetilde{\phi}\right)+\frac{1}{2}\sum\limits_{1\leq|\al|\leq2}\left(\pa^\al\widetilde{\phi}\pa_t\left(\int_{\phi^r}^{\phi}\rho_{e}''(\varrho) d\varrho\right)
,\pa^{\al}\widetilde{\phi}\right)
\\&-\sum\limits_{1\leq|\al|\leq2}\left(\pa^\al\pa_t\phi^r\int_{\phi^r}^{\phi}\rho_{e}''(\varrho) d\varrho
,\pa^{\al}\widetilde{\phi}\right)-\sum\limits_{1\leq|\al|\leq2,\atop{0<\al'\leq\al}}C_{\al'}^{\al}
\left(\pa^{\al-\al'}\pa_t\phi\pa^{\al'}\left(\int_{\phi^r}^{\phi}\rho_{e}''(\varrho) d\varrho\right)
,\pa^{\al}\widetilde{\phi}\right)
\\&-\sum\limits_{1\leq|\al|\leq2}\left(\pa^\al\left(\widetilde{\phi}\pa_t\phi^r\int_{\phi^r}^{\phi}\rho_{e}''(\varrho) d\varrho\right),\pa^{\al}\widetilde{\phi}\right).
\end{split}
\end{equation*}
We thus find by utilizing Cauchy-Schwarz's inequality with $0<\eta<1$ and Sobolev's inequality \eqref{sob.ine.}, the a priori assumption \eqref{aps}, as well as Lemma \ref{cl.Re.Re2.} that
\begin{equation*}
\begin{split}
&\left|\CK_1+\frac{1}{2}\frac{d}{dt}\sum\limits_{1\leq|\al|\leq2}\left(\pa^\al\widetilde{\phi}\int_{\phi^r}^{\phi}\rho_{e}''(\varrho) d\varrho
,\pa^{\al}\widetilde{\phi}\right)\right|
\\& \qquad  \lesssim
(\eps_0+\eta)\sum\limits_{1\leq|\al|\leq2}\left\|\pa^\al\widetilde{\phi}\right\|^2+{C_\eta}(1+t)^{-2}\left\|\widetilde{\phi}\right\|^2
+{C_\eta}\eps(1+t)^{-2}.
\end{split}
\end{equation*}
Similarly, it follows that
\begin{equation*}
\begin{split}
|\CK_2|+|\CK_3|
\lesssim
(\eps_0+\eta)\sum\limits_{1\leq|\al|\leq2}\left\|\pa^\al\widetilde{\phi}\right\|_{H^1}^2+{C_\eta}(1+t)^{-2}\left\|\widetilde{\phi}\right\|^2
+{C_\eta}\eps(1+t)^{-2}.
\end{split}
\end{equation*}
The second term on the right hand side of \eqref{CJ8p1} can be bounded by
$$
(\eps_0+\eta)\sum\limits_{1\leq|\al|\leq2}\left\|\pa_x\pa^\al\widetilde{\phi}\right\|^2
+\eps_0\sum\limits_{1\leq|\al|\leq2}\left\|\pa^\al\widetilde{\rho}\right\|^2
+\eps_0\sum\limits_{|\al|=1}\left\|\pa^\al\widetilde{u}_1\right\|^2+{C_\eta}(1+t)^{-2}\left\|\widetilde{u}_1\right\|^2.
$$
As to the third term on the right hand side of \eqref{CJ8p1},
when $|\al-\al'|\geq1$, it is bounded by
$$
(\eta+\eps)\sum\limits_{1\leq|\al|\leq2}\left\|\pa_x\pa^\al\widetilde{\phi}\right\|^2
+\eps_0\sum\limits_{|\al|=1}\left\|\pa^\al\widetilde{\rho}\right\|^2
+{C_\eta}(1+t)^{-2}\left\|\widetilde{\rho}\right\|^2.
$$
If $|\al-\al'|=0$, using \eqref{tphy} again, it reads
\begin{equation*}
\begin{split}
\sum\limits_{1\leq|\al|\leq2}\left(\pa_x\pa^{\al}\widetilde{\phi},\pa^{\al}\pa_x^2\widetilde{\phi}u^r_1\right)
+\sum\limits_{1\leq|\al|\leq2}\left(\pa_x\pa^{\al}\widetilde{\phi},\pa^{\al}\left(\rho_{e}(\phi^r)-\rho_{e}(\phi)\right)u^r_1\right)
+\sum\limits_{1\leq|\al|\leq2}\left(\pa_x\pa^{\al}\widetilde{\phi},\pa^{\al}\pa^2_x\phi^ru^r_1\right),
\end{split}
\end{equation*}
which is further dominated by
$$
(\eta+\eps_0)\sum\limits_{1\leq|\al|\leq2}\left\|\pa^\al\widetilde{\phi}\right\|_{H^1}^2+{C_\eta}(1+t)^{-2}\left\|\widetilde{\phi}\right\|^2
+{C_\eta}\eps(1+t)^{-2}.
$$
The last term on the right hand side of \eqref{CJ8p1} is controlled by
$$
(\eta+\eps_0)\sum\limits_{1\leq|\al|\leq2}\left\|\pa^\al\widetilde{\phi}\right\|^2
+\eps_0\sum\limits_{1\leq|\al|\leq2}\left\|\pa^\al\widetilde{\rho}\right\|^2+{C_\eta}(1+t)^{-2}\left\|\widetilde{\rho}\right\|^2.
$$
We now conclude from the above estimates that
\begin{equation}\label{CJ81s}
\begin{split}
&\left|\CJ_{8,1}-\frac{1}{2}\frac{d}{dt}\sum\limits_{1\leq|\al|\leq2}\left\{\left\|\pa^{\al}\pa_x\widetilde{\phi}\right\|^2
+\left\|\sqrt{\rho_{e}'(\phi^r)}\pa^{\al}\widetilde{\phi}\right\|^2\right\}
+\frac{1}{2}\frac{d}{dt}\sum\limits_{1\leq|\al|\leq2}\left(\pa^\al\widetilde{\phi}\int_{\phi^r}^{\phi}\rho_{e}''(\varrho) d\varrho
,\pa^{\al}\widetilde{\phi}\right)\right|
\\& \qquad  \lesssim
(\eps_0+\eta)\sum\limits_{1\leq|\al|\leq2}\left\|\pa^\al\widetilde{\phi}\right\|_{H^1}^2
+\eps_0\sum\limits_{1\leq|\al|\leq2}\left\|\pa^\al\left[\widetilde{\rho},\widetilde{u}_1\right]\right\|^2
+{C_\eta}(1+t)^{-2}\left\|\left[\widetilde{\rho},\widetilde{u}_1,\widetilde{\phi}\right]\right\|^2
+{C_\eta}\eps(1+t)^{-2}.
\end{split}
\end{equation}
It remains now to estimate $\CJ_{8,2}$, $\CJ_{8,3}$ and $\CJ_{8,4}$. By a similar argument as above, one sees that
\begin{equation}\label{CJ82-3}
\begin{split}
|\CJ_{8,2}|+|\CJ_{8,3}|
 \lesssim
(\eps_0+\eta)\sum\limits_{1\leq|\al|\leq2}\left\|\pa^\al\widetilde{\phi}\right\|_{H^1}^2
+\eta\sum\limits_{1\leq|\al|\leq2}\left\|\pa^\al\widetilde{u}_1\right\|^2
+\eps_0\sum\limits_{|\al|=1}\left\|\pa^\al\widetilde{\rho}\right\|^2
+{C_\eta}\eps(1+t)^{-2}.
\end{split}
\end{equation}
For $\CJ_{8,4}$, when $|\al|=2$, we directly get from Lemma \ref{cl.Re.Re2.} and H\"{o}lder's inequality that
\begin{equation}\label{CJ84.p1}
\begin{split}
|\CJ_{8,4}|
 \lesssim \sum\limits_{|\al|=2}\left\|\pa^{\al}\pa_x\phi^r\right\|_{L^1} \left\|\pa^{\al} u^r_1\right\|_{L^1}\lesssim \eps(1+t)^{-2}.
\end{split}
\end{equation}
If $|\al|=1$, thanks to \eqref{sob.ine.} and Lemma \ref{cl.Re.Re2.}, one has
\begin{equation}\label{CJ84.p2}
\begin{split}
|\CJ_{8,4}|
 \lesssim \sum\limits_{|\al|=1}\left\|\pa^{\al}\pa_x\phi^r\right\|_{L^1}
 \left\|\pa_x\pa^{\al} u^r_1\right\|^{1/2}\left\|\pa^{\al} u^r_1\right\|^{1/2}\lesssim \eps^{3/4}(1+t)^{-5/4}.
\end{split}
\end{equation}
Consequently,
\eqref{CJ8.c} follows from
\eqref{CJ81s}, \eqref{CJ82-3}, \eqref{CJ84.p1} and \eqref{CJ84.p2}. This ends the proof of \eqref{CJ8.c}.
\end{proof}


\begin{proof}[Proof of \eqref{2d.F.sum2}]
Performing the similar calculations as for obtaining \eqref{2d.F}, we have
\begin{equation}\label{2d.F2-rjap-adp1}
\begin{split}
\frac{1}{2}\frac{d}{dt}&\int_{{\R}\times{\R}^3}\frac{\left|\pa^{\al} F\right|^2}{\FM_*}dxd\xi
-\left(L_{\FM}\pa^{\al}\FG,\frac{\pa^{\al} \FG}{\FM_*}\right)
\\
=&\underbrace{\left(\pa^{\al}\pa_x\phi\pa_{\xi_1}\FM,\frac{\pa^{\al} \FM}{\FM_*}\right)}_{\CK_{4}}+\sum\limits_{0<\al'\leq \al}C_{\al'}^{\al}\left(Q(\pa^{\al'} \FM, \pa^{\al-\al'}\FG)+Q(\pa^{\al-\al'}\FG,\pa^{\al'} \FM),\frac{\pa^{\al} F}{\FM_*}\right)
\\&\underbrace{+\left(L_{\FM}\pa^{\al}\FG, \FP_1^{\FM}\left(\frac{\pa^{\al} \FM}{\FM_*}\right)\right)}_{\CK_{5}}
+\sum\limits_{0<\al'<\al}C_{\al'}^{\al}\left(\pa^{\al-\al'} \pa_x\phi \pa^{\al'} \pa_{\xi_1}F,\frac{\pa^{\al} F}{\FM_*}\right)
\\&+\left(\pa_x\phi\pa^{\al}\pa_{\xi_1}F,\frac{\pa^{\al} F}{\FM_*}\right)
+\left(\pa^{\al}\pa_x\phi\pa_{\xi_1}\FG,\frac{\pa^{\al} F}{\FM_*}\right)
+\left(\pa^{\al} Q(\FG,\FG),\frac{\pa^{\al} F}{\FM_*}\right),
\end{split}
\end{equation}
where $1\leq|\al|\leq2$.
We now estimate $\CK_{4}$ and $\CK_{5}$ only, since that other terms can be treated in the same way.
When $|\al|=2$, by applying H\"{o}lder's inequality, one has
\begin{equation*}
\begin{split}
|\CK_{4}|\leq& {\sum\limits_{|\al|=2}\int_{\R}\left|\pa^\al\pa_x\phi\pa^{\al}\left[\rh,u,\ta\right]\right|dx
+\sum\limits_{|\al|=2,|\al'|=1}\int_{\R}\left|\pa^\al\pa_x\phi\right|\left|\pa^{\al'}\left[\rh,u,\ta\right]\right|^2dx}
\\
\leq&
C\sum\limits_{1\leq|\al|\leq2}\left\|\pa^\al\left[\widetilde{\rho},\widetilde{u},\widetilde{\ta}\right]\right\|^2+C\sum\limits_{|\al|=2}
\left\|\pa^{\al}\pa_x\widetilde{\phi}\right\|^2
+C\eps(1+t)^{-2}.
\end{split}
\end{equation*}
When $|\al|=1$, we get from  H\"{o}lder's inequality and the same argument as in proving \eqref {CJ84.p2} that
\begin{equation*}
\begin{split}
|\CK_{4}|\leq& C\sum\limits_{|\al|=1}\left\|\pa^\al\left[\widetilde{\rho},\widetilde{u},\widetilde{\ta}\right]\right\|^2+C\sum\limits_{1\leq |\al|\leq2}
\left\|\pa^{\al}\widetilde{\phi}\right\|^2
+\eps^{3/4}(1+t)^{-5/4}.
\end{split}
\end{equation*}
As to $\CK_{5}$,  when $|\al|=2$,
by using Cauchy-Schwarz's inequality with $0<\eta<1$ and Lemma \ref{est.nonop}, we have
\begin{equation*}
\begin{split}
|\CK_{5}|\leq& {\eta\left\|\frac{(1+\xi)^{-1/2}\left[Q(\FM,\pa^{\al}\FG)+Q(\pa^{\al}\FG,\FM)\right]}{\sqrt{\FM_\ast}}\right\|^2
+C_\eta\left\|\sqrt{1+|\xi|}\sqrt{\FM_\ast}\FP_1^{\FM}\left(\frac{\pa^{\al} \FM}{\FM_*}\right)\right\|}
\\ \leq&
\eta\sum\limits_{|\al|=2}\int_{\R\times{\R}^3}\frac{(1+|\xi|)\left|\pa^{\al} \FG\right|^2}{\FM_*}dxd\xi
+C_\eta\sum\limits_{1\leq |\al|\leq 2}\left\|\pa^\al\left[\widetilde{\rho},\widetilde{u},\widetilde{\ta}\right]\right\|^2
+{C_\eta}\eps(1+t)^{-2}.
\end{split}
\end{equation*}
As to $|\al|=1$, it will be more complicated, and we first rewrite $\CK_{5}$ as
\begin{equation*}
\begin{split}
\sum\limits_{|\al|=1}\left(L_{\FM}\pa^{\al}\FG, \FP_1^{\FM}\left(\frac{\pa^{\al} \FM}{\FM_*}\right)\right)
=&\sum\limits_{|\al|=1}\left(Q\left(\FM,\pa^{\al}(\widetilde{\FG}+\overline{\FG})\right)
+Q\left(\pa^{\al}(\widetilde{\FG}+\overline{\FG}),\FM\right),\FP_1^{\FM}\left(\frac{\pa^{\al} \FM}{\FM_*}\right)\right)
\\=&\underbrace{-\sum\limits_{|\al|=1}\left(Q\left(\pa^{\al}\FM,\widetilde{\FG}\right)
+Q\left(\widetilde{\FG},\pa^{\al}\FM\right),\FP_1^{\FM}\left(\frac{\pa^{\al} \FM}{\FM_*}\right)\right)}_{\CK_{5,1}}
\\&\underbrace{-\sum\limits_{|\al|=1}\left(Q\left(\FM,\widetilde{\FG}\right)+Q\left(\widetilde{\FG},\FM\right),\pa^{\al}\FP_1^{\FM}\left(\frac{\pa^{\al} \FM}{\FM_*}\right)\right)}_{\CK_{5,2}}
\\&\underbrace{+\sum\limits_{|\al|=1}\left(Q\left(\FM,\pa^{\al}\overline{\FG}\right)
+Q\left(\pa^{\al}\overline{\FG},\FM\right),\FP_1^{\FM}\left(\frac{\pa^{\al} \FM}{\FM_*}\right)\right)}_{\CK_{5,3}}.
\end{split}
\end{equation*}
Then utilizing Lemmas \ref{est.nonop} and \ref{cl.Re.Re2.}, Cauchy-Schwarz's inequality with $0<\eta<1$, Sobolev's inequality \eqref{sob.ine.} as well as the a priori assumption \eqref{aps2}, one can show that

$$
|\CK_{5,1}|+|\CK_{5,2}|\lesssim(\eta+\eps_0)\int_{\R\times{\R}^3}\frac{(1+|\xi|)\left| \widetilde{\FG}\right|^2}{\FM_*}dxd\xi
+(C_\eta+\eps_0)\sum\limits_{1\leq|\al|\leq2}\left\|\pa^\al\left[\widetilde{\rho},\widetilde{u},\widetilde{\ta}\right]\right\|^2
+{C_\eta}\eps(1+t)^{-2},
$$
and
$$
|\CK_{5,3}|\lesssim
\sum\limits_{|\al|=1}\left\|\pa^\al\left[\widetilde{\rho},\widetilde{u},\widetilde{\ta}\right]\right\|^2+ \eps^{3/4}(1+t)^{-5/4}.
$$
Here, the details of derivations are omitted for brevity. This completes the proof of \eqref{2d.F.sum2} after taking the summation of \eqref{2d.F2-rjap-adp1} over $1\leq |\al|\leq 2$ and applying all the estimates above.
\end{proof}

\begin{proof}[Proof of \eqref{mi.diss4}] To prove \eqref{mi.diss4}, for fixed $\al$ and $\be$ satisfying $|\al|+|\be|\leq 2$ and $|\be|\geq1$, it suffices to estimate all the terms on the right hand side of \eqref{mixd.ip},
since the second term on the left hand side can be bonded below by
$$
\delta\int_{{\R}\times{\R}^3}\frac{(1+|\xi|)
\left|\pa^{\al}\pa^\be\widetilde{\FG}\right|^2}{\FM_{*}}dxd\xi ,
$$
according to Lemma \ref{co.est.}.

From Lemma \ref{est.nonop} and Cauchy-Schwarz's inequality with $0<\eta<1$, the first term on the right hand side
of \eqref{mixd.ip} is bounded by
\begin{equation*}
\begin{split}
\eta&\int_{{\R}\times{\R}^3}\frac{(1+|\xi|)
\left|\pa^{\al}\pa^\be\widetilde{\FG}\right|^2}{\FM_{*}}dxd\xi +C_\eta \sum\limits_{\al'\leq\al,\be'<\be}\int_{{\R}\times{\R}^3}\frac{(1+|\xi|)
\left|\pa^{\al'}\pa^{\be'}\widetilde{\FG}\right|^2}{\FM_{*}}dxd\xi
\\&+C_\eta\eps_0\sum\limits_{\al'<\al,\be'=\be}\int_{{\R}\times{\R}^3}\frac{(1+|\xi|)
\left|\pa^{\al'}\pa^{\be'}\widetilde{\FG}\right|^2}{\FM_{*}}dxd\xi .
\end{split}
\end{equation*}
The second and sixth terms are dominated by
\begin{equation*}
\begin{split}
\eta\int_{{\R}\times{\R}^3}\frac{(1+|\xi|)
\left|\pa^{\al}\pa^\be\widetilde{\FG}\right|^2}{\FM_{*}}dxd\xi +C_\eta\sum\limits_{1\leq|\al|\leq2}\left\|\pa^\al\left[\widetilde{\rho},\widetilde{u},\widetilde{\ta}\right]\right\|^2+{C_\eta}\eps(1+t)^{-2}.
\end{split}
\end{equation*}
Applying the splitting $\FG=\widetilde{\FG}+\overline{\FG}$ and the macro-micro decomposition $\xi_1\pa_x\FG=\FP_0^{\FM}\left(\xi_1\pa_x\FG\right)+\FP_1^{\FM}\left(\xi_1\pa_x\FG\right)$, we see that the third term can be rewritten as
\begin{equation*}
\begin{split}
-\left(\pa^{\al}\pa^\be\left(\xi_1\pa_x\widetilde{\FG}\right),\frac{\pa^{\al}\pa^\be\widetilde{\FG}}{\FM_{*}}\right)
-\left(\pa^{\al}\pa^\be\left(\xi_1\pa_x\overline{\FG}\right),\frac{\pa^{\al}\pa^\be\widetilde{\FG}}{\FM_{*}}\right)
+\left(\pa^{\al}\pa^\be\left(\FP_0^{\FM}\left(\xi_1\pa_x\FG\right)\right),\frac{\pa^{\al}\pa^\be\widetilde{\FG}}{\FM_{*}}\right),
\end{split}
\end{equation*}
which can be further bounded by
\begin{equation*}
\begin{split}
\eta&\int_{{\R}\times{\R}^3}\frac{(1+|\xi|)
\left|\pa^{\al}\pa^\be\widetilde{\FG}\right|^2}{\FM_{*}}dxd\xi +C_\eta \sum\limits_{|\al'|+|\be'|\leq |\al|+|\be|\atop{\be'<\be}}\int_{{\R}\times{\R}^3}\frac{(1+|\xi|)
\left|\pa^{\al'}\pa^{\be'}\widetilde{\FG}\right|^2}{\FM_{*}}dxd\xi
\\&+C_\eta\sum\limits_{1\leq|\al|\leq2}\int_{{\R}\times{\R}^3}\frac{(1+|\xi|)\left|\pa^{\al}\FG\right|^2}{\FM_{*}}dxd\xi
{+C_\eta\sum\limits_{1\leq|\al|\leq2}\left\|\pa^\al\left[\widetilde{\rho},\widetilde{u},\widetilde{\ta}\right]\right\|^2 +C_\eta\eps(1+t)^{-2}}.
\end{split}
\end{equation*}
Similarly, the fourth term is dominated by
\begin{equation*}
\begin{split}
(\eta+\eps_0)&\int_{{\R}\times{\R}^3}\frac{(1+|\xi|)
\left|\pa^{\al}\pa^\be\widetilde{\FG}\right|^2}{\FM_{*}}dxd\xi
+\eps_0\sum_{|\al'|+|\be'|\leq |\al|+|\be|}\int_{{\R}\times{\R}^3}\frac{(1+|\xi|)
\left|\pa^{\al'}\pa^{\be'}\widetilde{\FG}\right|^2}{\FM_{*}}dxd\xi
\\&+C_\eta\sum\limits_{1\leq|\al|\leq2}\left\|\pa^\al\left[\widetilde{\rho},\widetilde{u},\widetilde{\ta},\widetilde{\phi}\right]\right\|^2
+{C_\eta}\eps(1+t)^{-2}.
\end{split}
\end{equation*}
Using $\FG=\widetilde{\FG}+\overline{\FG}$ again, one can rewrite the fifth term as
\begin{equation*}
\begin{split}
&\left(\pa^{\al}\pa^\be Q\left(\widetilde{\FG},\widetilde{\FG}\right),\frac{\pa^{\al}\pa^\be\widetilde{\FG}}{\FM_{*}}\right)
+\left(\pa^{\al}\pa^\be Q\left(\widetilde{\FG},\overline{\FG}\right),\frac{\pa^{\al}\pa^\be\widetilde{\FG}}{\FM_{*}}\right)
\\&+\left(\pa^{\al}\pa^\be Q\left(\overline{\FG},\widetilde{\FG}\right),\frac{\pa^{\al}\pa^\be\widetilde{\FG}}{\FM_{*}}\right)
+\left(\pa^{\al}\pa^\be Q\left(\overline{\FG},\overline{\FG}\right),\frac{\pa^{\al}\pa^\be\widetilde{\FG}}{\FM_{*}}\right),
\end{split}
\end{equation*}
which can be controlled by
\begin{equation*}
\begin{split}
(\eta+\eps_0)&\int_{{\R}\times{\R}^3}\frac{(1+|\xi|)
\left|\pa^{\al}\pa^\be\widetilde{\FG}\right|^2}{\FM_{*}}dxd\xi
+\eps_0\sum_{\al'\leq\al,\be'\leq\be}\int_{{\R}\times{\R}^3}\frac{(1+|\xi|)
\left|\pa^{\al'}\pa^{\be'}\widetilde{\FG}\right|^2}{\FM_{*}}dxd\xi
\\&+C_\eta\sum\limits_{1\leq|\al|\leq2}\left\|\pa^\al\left[\widetilde{\rho},\widetilde{u},\widetilde{\ta}\right]\right\|^2
+{C_\eta}\eps(1+t)^{-2},
\end{split}
\end{equation*}
according to Lemmas \ref{est.nonop} and \ref{cl.Re.Re2.}, Cauchy-Schwarz's inequality with $0<\eta<1$, Sobolev's inequality \eqref{sob.ine.} as well as the a priori assumption \eqref{aps2}.

Finally, taking a suitable linear combination of the above estimates for all the cases that $|\al|+|\be|\leq 2$ and $|\be|\geq1$, 
and noticing that
\begin{equation*}
\begin{split}
\sum_{1\leq|\al|\leq2}&\int_{{\R}\times{\R}^3}\frac{(1+|\xi|)
\left|\pa^{\al}\widetilde{\FG}\right|^2}{\FM_{*}}dxd\xi
\\ \lesssim& \sum_{1\leq|\al|\leq2}\int_{{\R}\times{\R}^3}\frac{(1+|\xi|)
\left|\pa^{\al}\FG\right|^2}{\FM_{*}}dxd\xi+\sum_{1\leq|\al|\leq2}\int_{{\R}\times{\R}^3}\frac{(1+|\xi|)
\left|\pa^{\al}\overline{\FG}\right|^2}{\FM_{*}}dxd\xi
\\ \lesssim& \sum_{1\leq|\al|\leq2}\int_{{\R}\times{\R}^3}\frac{(1+|\xi|)
\left|\pa^{\al}\FG\right|^2}{\FM_{*}}dxd\xi
+\sum\limits_{1\leq|\al|\leq2}\left\|\pa^\al\left[\widetilde{\rho},\widetilde{u},\widetilde{\ta}\right]\right\|^2
+\eps(1+t)^{-2},
\end{split}
\end{equation*}
one sees that
\eqref{mi.diss4} holds, and this ends the proof of \eqref{mi.diss4}.
\end{proof}

\medskip
\noindent {\bf Acknowledgements:} RJD was supported by the General Research Fund (Project No.~400912) from RGC of Hong Kong. SQL was
supported by grants from the National Natural Science Foundation of China under contracts 11101188 and 11271160. The authors would like to thank Professor Tong Yang and Professor Huijiang Zhao for many fruitful discussions on the topic of the paper.


\end{document}